\newtheorem{theorem}{Theorem}
\newtheorem{conjecture}[theorem]{Conjecture}
\newtheorem{corollary}[theorem]{Corollary}
\newtheorem{definition}[theorem]{Definition}
\newtheorem{lemma}[theorem]{Lemma}
\newtheorem{proposition}[theorem]{Proposition}
\newenvironment{proof}[1][Proof]{\noindent\textbf{#1.} }{\ \rule{0.5em}{0.5em}}
\def\lab(#1)#2{\put(#1){\makebox(0,0)[c]{#2}}}
\begin{document}

\title{Approximating Minimum Steiner Point Trees in Minkowski Planes\thanks{This research was supported by an ARC
Discovery Grant.}}

\author{M.~Brazil \and C.~J.~Ras \and  D.~A.~Thomas}

\date{}
\maketitle

\begin{abstract}
Given a set of points, we define a minimum Steiner point tree to be a tree interconnecting these points and possibly some additional points such
that the length of every edge is at most 1 and the number of additional points is minimized. We propose using Steiner minimal trees to
approximate minimum Steiner point trees. It is shown that in arbitrary metric spaces this gives a performance difference of at most $2n-4$,
where $n$ is the number of terminals. We show that this difference is best possible in the Euclidean plane, but not in Minkowski planes with
parallelogram unit balls. We also introduce a new canonical form for minimum Steiner point trees in the Euclidean plane; this demonstrates that
minimum Steiner point trees are shortest total length trees with a certain discrete-edge-length condition.\\

\noindent \em Keywords: minimum Steiner point trees, bounded edge-length, Minkowski planes
\end{abstract}

\section{Introduction}
Given a metric space $(S,d)$ with metric $d$, a set of points $N \subseteq S$, and an $R\in \mathbb{R}$, the \textit{minimum Steiner point tree
problem} (MSPT problem) asks for a set $U\subset S$ and a tree connecting $N\cup U$ such that no edge is longer than $R$ and $|U|$ is minimized.
Clearly we may assume that $R=1$. An optimal solution is called a $d$-MSPT, or just an MSPT if the context is clear. MSPTs have applications in
the deployment and augmentation of wireless sensor networks, VLSI design, and wavelength-division multiplexing networks - see for instance
\cite{bib10,bib8,bib7,bib9}.

The MSPT problem was first described by Sarrafzadeh and Wong in \cite{bib3}, where they showed that it is NP-complete in both the $\ell_1$ and
$\ell_2$ metric. Consequently a fair amount of research has been directed towards finding good heuristics. In \cite{bib4} the minimum spanning
tree (MST) heuristic was introduced (note that there they refer to the MSPT problem as the \textit{Steiner tree problem with minimum number of
Steiner points and bounded edge length}, or STP-MSPBEL). This heuristic simply subdivides all edges of an MST that are longer than one unit,
resulting in an approximate MSPT solution within polynomial time. Mandoiu and Zelikovsky \cite {bib6} prove that, in any metric space, the
performance ratio of the MST heuristic is always one less than the maximum possible degree of a minimum-degree MST spanning points from the
space. This gives an approximation ratio of four in the Euclidean plane and three in the rectilinear plane. Chen et al. \cite{bib5} provide an
improved approximation scheme, partly based on the MST heuristic, which has a performance ratio of three in the Euclidean plane.

The MSPT problem may be seen as a variant of the classical Steiner tree problem, which asks for a shortest tree interconnecting $N\subseteq S$
where any number of additional points may be introduced. An optimal solution to this problem is called a Steiner minimal tree ($d$-SMT or just
SMT). As $R$ tends to zero an SMT with subdivided edges becomes an optimal solution to the MSPT problem. This leads us to the question: would
the SMT approximation for the MSPT problem be a practical and accurate heuristic? Certainly we do not have effective algorithms for calculating
SMT's in every metric, in fact the problem is NP-hard. However, in the Euclidean plane and other fixed orientation metrics, Warme, Winter, and
Zachariasen \cite{bib1,bib2} have developed practical, fast and optimal SMT algorithms, namely the GeoSteiner algorithms. These algorithms can
comfortably solve most instances of up to a few thousand uniformly distributed terminals. However, as should be expected, it is possible to
construct terminal-sets that take much longer to process; for instance, GeoSteiner cannot efficiently find an SMT when just one hundred
terminals are located at the vertices of a regular square lattice in the Euclidean plane (although these instances can be solved in polynomial
time by the algorithms of Brazil et al. \cite{Brazil2}).

In this paper we define and analyze the \textit{SMT heuristic} for MSPTs. We provide a small upper bound (in terms of $|N|$) for the performance
difference of the SMT heuristic in any normed plane, and show that this bound is best possible in the Euclidean plane. We then show that, in the
special case $|N|=3$, the upper bound is tight in a Minkowski plane with unit ball $B$ if and only if $B$ is not a parallelogram. For the
Euclidean and rectilinear planes a brief comparison between the SMT heuristic and current best possible heuristics is given. Then we prove that
the performance ratio of the SMT heuristic improves as $R$ decreases (or equivalently, as the terminals become further apart). This paper also
explores the possibility of restating the Euclidean MSPT problem in terms of shortest total length, leading to a new MSPT canonical form.
Finally, we state a number of strong conjectures on the relationship between the Steiner tree problem and the MSPT problem.

\section{Preliminaries}
Let $(S,d)$ be a metric space with metric $d$, and consider a set $N \subseteq S$. The \textit{Steiner tree problem} asks for a shortest tree
interconnecting $N$, where extra nodes $W \subset S$ are introduced if they reduce the total length. Introducing degree-one or degree-two nodes
will not reduce total length, henceforth for the Steiner tree problem we assume all added nodes are of degree at least three. The nodes in $N$
are called \textit{terminal points} and the nodes in $W$ are called \textit{Steiner points}.

In general metric spaces there may be instances of the MSPT problem that have no solution; consider, for instance, the case when $N=S$ and
$\min\{d(x,y):x,y\in S\}>1$. Henceforth we will assume the following: $S=\mathbb{R}^2$, $\vert N\vert$ is finite, and $d$ is a norm. In other
words, we will only be considering the finite MSPT problem in \textit{Minkowski planes}. In our discussions we distinguish between the concept
of a \textit{free node} and an \textit{embedded node}. In other words any tree may be considered as a topological graph structure only, or as an
embedded network. Embedded nodes are denoted by bold letters (as is common when representing vectors). An embedded set of terminals
\textit{admits} a tree with property $P$ if there exists a tree $T$ interconnecting the terminals such that $T$ has property $P$.

Two standard techniques for shortening an embedded tree are \textit{splitting} and Steiner point \textit{displacements}. To \textit{split} a
node $v$ one disconnects two or more of the edges at $v$ and connects them instead to a new Steiner point, connected to $v$ by an extra edge. To
\textit{displace} a Steiner point one simply embeds it at any new point in the plane without changing the topology of the tree. If no shortening
of a tree is possible when splitting or Steiner point displacements are allowed, then the tree is called a \textit{Steiner tree}. Note that an
SMT is always a Steiner tree. A \textit{full Steiner tree} is a Steiner tree where every terminal is of degree one and every Steiner point is of
degree three. A full Steiner tree has exactly $\vert N\vert -2$ Steiner points and $2\vert N \vert-3$ edges. A \textit{cherry} of a full Steiner
tree is the subtree induced by two terminals and their mutually adjacent Steiner point. Every full Steiner tree has at least two cherries. We
refer the reader to \cite{bib13} and \cite{bib14} for more background on Steiner trees.

Given two points $x,y \in S$, we denote the edge $e$ between them by $e=xy$, and we use the standard notation $\vert e \vert$ to denote
$d(x,y)$. Any Steiner tree can be viewed as a candidate MSPT if we simply subdivide, or \textit{bead}, edges that are longer than one unit.
Formally, \textit{beading} is the process whereby for every edge $e$, $\lceil \vert e \vert \rceil - 1$ equally spaced degree-two nodes lying on
$e$ are included (along with the elements of $W$) in the set $U$ of extra MSPT nodes. In general, any tree can be viewed as an MSPT candidate if
we partition its nodes into a set $N$ of terminals and a set $W$ of Steiner points of degree at least three, and then bead any edges that are
too long. Consequently, when constructing an MSPT on a given set $N$, we are mainly concerned with finding the elements of $W$, i.e., the
elements of $U$ that have degree at least three; clearly degree-one nodes will not occur in $U$ and degree-two nodes in $U$ only arise from
beading. Henceforth, degree-two nodes in $U$ will not be considered as part of the topology of the MSPT. All nodes in $U$ will be referred to as
\textit{beads} and, specifically, the nodes in $W$ will be called \textit{Steiner beads}. The procedure of constructing an SMT in order to
approximate an MSPT will be referred to as the \textit{SMT heuristic}.

Let $T$ be any tree with node-set partitioned into terminals $N$ and Steiner beads $W$. Let $n=\vert N \vert$. Then $T^*$ is the tree that
results by splitting nodes of $T$ until every terminal is of degree one and every Steiner bead is of degree three (i.e., $T^*$ is a full Steiner
tree). New nodes are not displaced from their original positions, in other words some zero edge-lengths may be introduced and the total length
of $T$ does not change. See Figure \ref{figureSat} as an example; here $t$ is a degree-four terminal, $s$ is a Steiner point, and after
splitting $t$ we have three zero-length edges (depicted by broken lines).

\begin{figure}[htb]
\begin{center}

\includegraphics[scale=0.4]{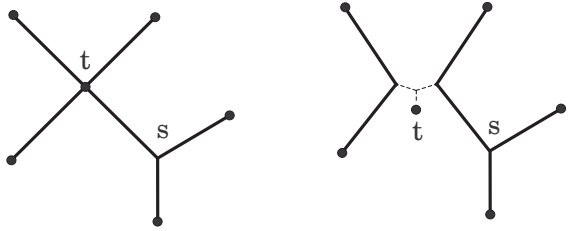}

\end{center}
\caption{Conversion to a full Steiner tree. \label{figureSat}}
\end{figure}

Let the edge-set of $T^*$ be $E(T^*)=\{e_1,...,e_m\}$, where $m=2n-3$. Then the \textit{bead count} of $T$ is $\mathrm{beads}(T)=\vert U \vert=
n-2+ \sum_{i=1}^m \left(\lceil \vert e_{i} \vert \rceil -1 \right)=1-n+\sum_{i=1}^m \lceil \vert e_{i} \vert \rceil$. In other words, by
considering $T^*$ rather than $T$ we get a formula for $\mathrm{beads}(T)$ that does not depend on the number of Steiner beads of $T$; this
formula works because every time a node is split (creating a new Steiner bead) we introduce a zero-length edge which in effect cancels the count
of this Steiner bead. We can now reformulate the MSPT problem as follows. Let $N$ be a subset of $S$. Find a $W\subset S$ and a tree $T$
interconnecting $N \cup W$ such that every node in $W$ is of degree at least three and $\mathrm{beads}(T)$ is a minimum over all trees
interconnecting $N$.

\section{The Upper Bound in any Normed Plane}\label{sec1}
In this section we provide an upper bound for the performance difference of the SMT heuristic in any normed plane. Let $N$ be a set of $n$
terminals in a normed plane $(\mathbb{R}^2,d)$. We use $T_{\mathrm{opt}}$ to denote an MSPT on $N$ and $T_{S}$ to denote an SMT on $N$. We need
the following lemma before we prove our main result:

\begin{lemma}\label{Lemma1} If $i,k$ are real numbers then $\lceil i+k \rceil - \lceil i \rceil = \lceil k \rceil$ or $\lceil k \rceil -1$
(equivalently $\lfloor k \rfloor$ or $\lfloor k \rfloor +1$), with $\lceil i+k \rceil -\lceil i \rceil = k$ if $k$ is an integer.
\end{lemma}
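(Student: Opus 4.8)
The plan is to prove this elementary fact about ceilings by reducing to the fractional part of $i$. Write $i = \lfloor i \rfloor + \{i\}$ where $0 \le \{i\} < 1$, so that $\lceil i \rceil = \lfloor i \rfloor$ if $\{i\} = 0$ and $\lceil i \rceil = \lfloor i \rfloor + 1$ otherwise. The key observation is that $\lceil i + k \rceil - \lceil i \rceil$ depends on $k$ and on the fractional part of $i$ only, so I would first reduce modulo $1$ and assume without loss of generality that $0 \le i < 1$.

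First I would handle the case $k \in \mathbb{Z}$ directly: then $\lceil i + k \rceil = \lceil i \rceil + k$ since shifting by an integer commutes with the ceiling, giving $\lceil i+k \rceil - \lceil i \rceil = k = \lceil k \rceil$, which also shows the claimed value is $\lceil k \rceil$ (not $\lceil k \rceil - 1$) in this sub-case. For the general case, I would write $k = \lfloor k \rfloor + \{k\}$ and split on whether $\{i\} + \{k\} < 1$ or $\ge 1$ (and on whether each fractional part is zero). A short computation in each sub-case shows $\lceil i + k \rceil - \lceil i \rceil$ equals either $\lfloor k \rfloor$ or $\lfloor k \rfloor + 1$; since $\lceil k \rceil = \lfloor k \rfloor$ when $k \in \mathbb{Z}$ and $\lceil k \rceil = \lfloor k \rfloor + 1$ otherwise, this is exactly the statement that the difference is $\lceil k \rceil$ or $\lceil k \rceil - 1$ (equivalently $\lfloor k \rfloor$ or $\lfloor k \rfloor + 1$).

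An alternative, cleaner route is to use the two universal inequalities $\lceil a \rceil + \lceil b \rceil - 1 \le \lceil a + b \rceil \le \lceil a \rceil + \lceil b \rceil$, applied once with $a = i$, $b = k$ (giving $\lceil i+k\rceil - \lceil i\rceil \le \lceil k\rceil$) and once with $a = i+k$, $b = -k$ together with $\lceil -k \rceil = -\lfloor k \rfloor$ (giving $\lceil i+k\rceil - \lceil i\rceil \ge \lfloor k\rfloor$). Since $\lceil k \rceil - \lfloor k \rfloor \in \{0,1\}$, the difference is forced to be $\lceil k \rceil$ or $\lceil k \rceil - 1$, and it equals $\lceil k \rceil = \lfloor k \rfloor$ when $k$ is an integer because then the upper and lower bounds coincide.

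There is no real obstacle here — the result is a standard arithmetic identity; the only thing to be careful about is bookkeeping the fractional-part cases (or, in the second approach, correctly invoking $\lceil -k\rceil = -\lfloor k\rfloor$), and making sure the boundary case "$k$ an integer" is called out separately since that is where the alternative $\lceil k \rceil - 1$ is excluded. I would present the second, inequality-based argument as it is the shortest and avoids an explicit case split.
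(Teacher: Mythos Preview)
Your proposal is correct; both routes you sketch work, and the integer case is handled properly. The paper's own proof is closest in spirit to your first (fractional-part) approach but is packaged more compactly: instead of the floor fractional parts $\{i\},\{k\}$ it uses the \emph{ceiling defects} $\varepsilon_i = \lceil i\rceil - i$ and $\varepsilon_k = \lceil k\rceil - k$ (each in $[0,1)$) and records the single identity
\[
\lceil i+k\rceil = \lceil i\rceil + \lceil k\rceil - \lfloor \varepsilon_i + \varepsilon_k\rfloor,
\]
from which the dichotomy and the integer case (where $\varepsilon_k=0$) drop out without an explicit case split. Your preferred inequality-sandwich argument is a genuinely different and equally clean route: it trades the explicit identity for the two standard bounds $\lceil a\rceil+\lceil b\rceil-1\le\lceil a+b\rceil\le\lceil a\rceil+\lceil b\rceil$ and the relation $\lceil -k\rceil=-\lfloor k\rfloor$, which has the advantage of being entirely case-free but the slight disadvantage of not exhibiting precisely when each alternative occurs. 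Either is perfectly acceptable here.
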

\begin{proof}
Suppose that $\lceil i \rceil = i +\varepsilon_i$ and $\lceil k \rceil = k + \varepsilon_k$ where $0 \leq \varepsilon_i,\varepsilon_k < 1$. Then
$\lceil i+k \rceil = \lceil i \rceil + \lceil k \rceil - \lfloor \varepsilon_i + \varepsilon_k \rfloor$, from which the result follows.
\end{proof}

Suppose that $E(T_{\mathrm{opt}}^*)=\{e_1,...,e_m\}$ and $E(T_{S}^*)=\{a_{1},...,a_m\}$. Since $T_S$ is a shortest total length tree
interconnecting $N$, we have:
\begin{eqnarray}
  \sum\limits_{i=1}^m \vert a_i \vert \leq \sum\limits_{i=1}^m \vert e_i \vert \label{eq0}
\end{eqnarray}

We can therefore partition the set $\{1,...,m\}$ as follows: let $\{1,...,m\}= I \cup D$ such that $\vert e_i \vert = \vert a_i \vert+p_i$ for
$i \in I$ and $\vert e_i \vert = \vert a_i \vert -p_i$ for $i\in D$. Here each $p_i$ is a non-negative real number and the cardinality of $D$,
but not $I$, may be zero.  We further partition $I$ into $I_Z$ and $I_{Z}^{\prime}$ (where $I_Z$ may be empty) such that $i \in I_Z$ if and only
if $\vert a_i \vert$ is an integer. We similarly partition $D$ into $D_Z$ and $D_{Z}^{\prime}$. From (\ref{eq0}) it follows that $\sum
\limits_{i\in I} p_i \geq \sum\limits_{i\in D} p_i$ - a result that is central to the proof of Proposition \ref{mainUpperProp}.

\begin{lemma}\label{Lemma2} If all edges of both $T_{S}$ and $T_{\mathrm{opt}}$ have integer length, then $T_{\mathrm{opt}}$
is also an SMT on $N$, and $\mathrm{beads}(T_S)=\mathrm{beads}(T_{\mathrm{opt}})$ .
\end{lemma}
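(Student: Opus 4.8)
The plan is to exploit the bead-count formula $\mathrm{beads}(T)=1-n+\sum_{i=1}^{m}\lceil\,|e_i|\,\rceil$ (taken over the edges of $T^{*}$, so $m=2n-3$ for both trees) together with inequality~(\ref{eq0}) and the defining optimality of $T_{\mathrm{opt}}$ as an MSPT. The key observation is that the integral–edge–length hypothesis collapses every ceiling, turning $\mathrm{beads}(\cdot)$ into an affine function of total length, so that minimizing bead count and minimizing total length become the same problem on such instances.

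First I would note that since every edge of $T_S$ has integer length, the same holds for every edge of $T_S^{*}$: passing from $T_S$ to $T_S^{*}$ only introduces zero-length edges, and $0\in\mathbb{Z}$. Likewise every edge of $T_{\mathrm{opt}}^{*}$ has integer length. Writing $E(T_S^{*})=\{a_1,\dots,a_m\}$ and $E(T_{\mathrm{opt}}^{*})=\{e_1,\dots,e_m\}$ and using $\lceil k\rceil=k$ for integer $k$, the bead-count formula reduces to
\[
\mathrm{beads}(T_S)=1-n+\sum_{i=1}^{m}|a_i|,\qquad \mathrm{beads}(T_{\mathrm{opt}})=1-n+\sum_{i=1}^{m}|e_i|.
\]
By~(\ref{eq0}) we have $\sum_{i=1}^{m}|a_i|\le\sum_{i=1}^{m}|e_i|$, hence $\mathrm{beads}(T_S)\le\mathrm{beads}(T_{\mathrm{opt}})$. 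Conversely, beading $T_S$ yields a valid MSPT candidate on $N$ using exactly $\mathrm{beads}(T_S)$ extra nodes, so optimality of $T_{\mathrm{opt}}$ gives $\mathrm{beads}(T_{\mathrm{opt}})\le\mathrm{beads}(T_S)$. Therefore $\mathrm{beads}(T_S)=\mathrm{beads}(T_{\mathrm{opt}})$, which is the second assertion, and tracing back through the now-tight inequality forces $\sum_{i=1}^{m}|a_i|=\sum_{i=1}^{m}|e_i|$.

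To finish, I would observe that $\sum_{i=1}^{m}|e_i|$ is the total length of $T_{\mathrm{opt}}^{*}$, which equals the total length of $T_{\mathrm{opt}}$ since splitting preserves length, while $\sum_{i=1}^{m}|a_i|$ is the length of the SMT $T_S$. Thus $T_{\mathrm{opt}}$ is a tree interconnecting $N$ (with all added nodes of degree at least three, as the MSPT reformulation guarantees) whose total length equals that of an SMT, so $T_{\mathrm{opt}}$ is itself an SMT. I do not expect a genuine obstacle here; the only care needed is the bookkeeping — applying the bead formula with a common $m=2n-3$ to both trees (hence passing to the starred full Steiner trees) and noting that $T_{\mathrm{opt}}$ is an admissible competitor in the Steiner tree problem so that the SMT length is the correct lower bound on its total length.
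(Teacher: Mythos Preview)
Your proof is correct and follows essentially the same approach as the paper: both arguments reduce to the observation that when all edge lengths are integers, the bead-count formula collapses to $\mathrm{beads}(T)=L(T)-n+1$, after which the two optimality conditions ($T_S$ minimizes length, $T_{\mathrm{opt}}$ minimizes beads) force equality throughout. Your write-up is simply more explicit about the bookkeeping (passing to $T^{*}$, noting the new zero-length edges are still integers), which the paper leaves implicit.
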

\begin{proof}
Let $T$ be any Steiner tree on $N$ such that all edges of $T$ have integer length, and let $L(T)$ be the total length of $T$. Then
$\mathrm{beads}(T)= L(T) - n +1$. The lemma immediately follows since $L(T_{\mbox{\scriptsize opt}}) \geq L(T_S)$.
\end{proof}

\begin{proposition}\label{mainUpperProp}
$\mathrm{beads}(T_S)-\mathrm{beads}(T_{\mathrm{opt}}) \leq \max\{2n-4-j, 0\}$, where $j$ is the number of integer-length edges in $E(T_{S}^*)$.
\end{proposition}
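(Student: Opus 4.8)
The plan is to avoid an edge-by-edge case analysis and instead work directly from the bead-count formula. Write $E(T_S^*)=\{a_1,\dots,a_m\}$ and $E(T_{\mathrm{opt}}^*)=\{e_1,\dots,e_m\}$ with $m=2n-3$. Since $\mathrm{beads}(T)=1-n+\sum_{i=1}^m\lceil|e_i|\rceil$ in terms of the edges of $T^*$, we get
$$\mathrm{beads}(T_S)-\mathrm{beads}(T_{\mathrm{opt}})=\sum_{i=1}^m\lceil|a_i|\rceil-\sum_{i=1}^m\lceil|e_i|\rceil ,$$
so the whole task is to bound this difference of sums of ceilings from above.

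First I would strip the ceilings off the $T_{\mathrm{opt}}$ side using the trivial bound $\lceil|e_i|\rceil\ge|e_i|$, and then invoke that $T_S$ has shortest total length. This gives the chain
$$\sum_i\lceil|a_i|\rceil-\sum_i\lceil|e_i|\rceil\ \le\ \sum_i\lceil|a_i|\rceil-\sum_i|e_i|\ \le\ \sum_i\lceil|a_i|\rceil-\sum_i|a_i|\ =\ \sum_{i=1}^m\bigl(\lceil|a_i|\rceil-|a_i|\bigr),$$
where the middle inequality is precisely (\ref{eq0}), equivalently the inequality $\sum_{i\in I}p_i\ge\sum_{i\in D}p_i$ noted above. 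Observe that only the total-length comparison between the two trees is used here, not the finer partition of $\{1,\dots,m\}$ into $I_Z,I_Z',D_Z,D_Z'$.

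To finish I would count fractional parts. Every term $\lceil|a_i|\rceil-|a_i|$ lies in $[0,1)$, and it vanishes exactly when $|a_i|$ is an integer, which happens for precisely $j$ of the $m$ indices. Hence $\sum_{i=1}^m(\lceil|a_i|\rceil-|a_i|)$ is a sum of $m-j$ reals each strictly below $1$, together with $j$ zeros: if $m-j\ge 1$ it is strictly less than $m-j=2n-3-j$, and if $m-j=0$ it equals $0$. Since $\mathrm{beads}(T_S)-\mathrm{beads}(T_{\mathrm{opt}})$ is an integer that does not exceed this sum, in the first case it is at most $2n-4-j$ and in the second at most $0$; in either case it is at most $\max\{2n-4-j,0\}$, which is the claim. (The case $j=m$ here reproduces the content of Lemma \ref{Lemma2}.)

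I do not expect a genuine obstacle. The only delicate point is the passage from the strict real estimate, that the sum is strictly less than $2n-3-j$, to the integer estimate that the bead difference is at most $2n-4-j$: this rounding is exactly what buys the improvement of one unit over the crude bound $2n-3-j$, and it also forces one to treat the all-integer-length case ($j=m$) separately so that the statement is correctly phrased with $\max\{\cdot,0\}$ rather than admitting a spurious negative value.
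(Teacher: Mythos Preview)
Your proof is correct and genuinely simpler than the paper's. The paper pairs up the two edge-sets, partitions the index set according to whether $|e_i|\ge|a_i|$ or $|e_i|<|a_i|$ and whether $|a_i|$ is an integer, applies Lemma~\ref{Lemma1} termwise to control $\lceil|a_i|\pm p_i\rceil-\lceil|a_i|\rceil$, and then runs a three-case analysis (perturbing a Steiner point of $T_{\mathrm{opt}}$ in the degenerate case) to squeeze the bound from $2n-3-j$ down to $2n-4-j$. You sidestep all of this: by relaxing $\lceil|e_i|\rceil$ to $|e_i|$ on the $T_{\mathrm{opt}}$ side and then using only the aggregate length inequality $\sum|a_i|\le\sum|e_i|$, you reduce the whole question to bounding $\sum_i(\lceil|a_i|\rceil-|a_i|)$, a sum of $m-j$ fractional parts each strictly below $1$; the strict inequality $<m-j$ combined with integrality of the bead difference immediately yields $\le 2n-4-j$. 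Your argument needs neither Lemma~\ref{Lemma1} nor the $I_Z,I_Z',D_Z,D_Z'$ bookkeeping nor the closing case analysis. What the paper's finer decomposition might offer is more visibility into which configurations attain the bound (information used later in Sections~\ref{sec2}--\ref{sec3}), but for proving the proposition itself your route is shorter and fully rigorous.
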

\begin{proof}
\begin{eqnarray*}
\mathrm{beads}(T_S)-\mathrm{beads}(T_{\mbox{\scriptsize opt}}) &=&  \left[1-n+\sum\limits_{i=1}^m \lceil \vert a_{i} \vert
\rceil\right]-\left[1-n+\sum\limits_{i=1}^m
\lceil \vert e_{i} \vert\rceil\right]\\
&=& \sum\limits_{i=1}^m \lceil \vert a_{i} \vert \rceil-\sum\limits_{i=1}^m \lceil \vert e_{i} \vert \rceil\\
&=& \sum\limits_{i \in D}\{ \lceil \vert a_{i} \vert\rceil-\lceil\vert a_{i} \vert-p_i\rceil \}- \sum\limits_{i \in I}\{ \lceil \vert a_{i}
\vert+p_i\rceil-\lceil\vert a_{i} \vert\rceil \}.\\
\end{eqnarray*}

Using Lemma~\ref{Lemma1} we obtain:
\begin{eqnarray}
\mathrm{beads}(T_S)-\mathrm{beads}(T_{\mbox{\scriptsize opt}}) &\leq& \sum\limits_{i\in D_Z} \lfloor p_i \rfloor + \sum\limits_{i\in
D_{Z}^{\prime}} \left( \lfloor
p_i \rfloor+1\right) - \sum\limits_{i \in I_Z} \lceil p_i\rceil - \sum\limits_{i \in I_{Z}^{\prime}} \left(\lceil p_i\rceil -1\right) \nonumber \\
&=& \vert D_{Z}^{\prime} \vert + \vert I_{Z}^{\prime} \vert +\sum\limits_{i \in D} \lfloor p_i \rfloor - \sum\limits_{i\in I} \lceil p_i\rceil \nonumber \\
&\leq & m-j+\sum\limits_{i \in D} p_i - \sum\limits_{i \in I} \lceil p_i\rceil \label{eq1} \\
&\leq & m-j+\sum\limits_{i \in I} p_i - \sum\limits_{i \in I} \lceil p_i\rceil \nonumber \\
&\leq & m-j  \label{eq2} \\
&=& 2n-3-j. \nonumber
\end{eqnarray}

\noindent We now consider a number of cases showing that either (\ref{eq1}) or (\ref{eq2}) is a strict inequality or
$\mathrm{beads}(T_S)=\mathrm{beads}(T_{\mbox{\scriptsize opt}})$. Together, these imply the statement of the lemma.

\textsc{Case 1:} Suppose there exists an edge in $T_S$ that is longer than some edge in $T_{\mbox{\scriptsize opt}}$ and such that the
difference between the lengths of the two edges is not an integer. Then there exists an assignment of labels $\{e_i\}$ to the edges of
$T_{\mbox{\scriptsize opt}}$ and labels  $\{a_i\}$ to the edges of $T_S$ such that $p_i \not\in \mathbb{Z}$ for some $i \in D$. Hence
Inequality~(\ref{eq1}) is strict.

\textsc{Case 2:} If there exists an edge in $T_S$ that is shorter than some edge in $T_{\mbox{\scriptsize opt}}$ and such that the difference
between the lengths of the two edges is not an integer, then by the same argument as in Case~1, we can assume Inequality~(\ref{eq2}) is strict.

\textsc{Case 3:} The only remaining possibility is that the difference in length between each edge in $T_S$ and each edge in
$T_{\mbox{\scriptsize opt}}$ is an integer. This means there exists an $\varepsilon \in [ 0, 1)$ such that the length of every edge in both
trees is an integer plus $\varepsilon$. If $\varepsilon=0$ then $\mathrm{beads}(T_S)-\mathrm{beads}(T_{\mbox{\scriptsize opt}})=0$ by
Lemma~\ref{Lemma2}. If $\varepsilon \not= 0$ then we can move any Steiner point in $T_{\mbox{\scriptsize opt}}$ by a sufficiently small distance
($>0$) such that the length of at least one edge changes without changing the bead count of $T_{\mbox{\scriptsize opt}}$. Hence we can then
apply Case~1 or 2.

\end{proof}

\begin{corollary} $\mathrm{beads}(T_S) - \mathrm{beads}(T_{\mathrm{opt}}) \leq 2n-c-3$ where $c$ is the number of full components of $T_S$.
\end{corollary}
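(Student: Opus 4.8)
The plan is to read this off Proposition~\ref{mainUpperProp}, which already gives $\mathrm{beads}(T_S)-\mathrm{beads}(T_{\mathrm{opt}})\le\max\{2n-4-j,0\}$, where $j$ is the number of integer-length edges of $T_S^*$. So it is enough to exhibit $c-1$ integer-length edges in $E(T_S^*)$ and then to show that the ``$\max$'' is superfluous. The key observation is that every edge of $T_S^*$ that is not already an edge of $T_S$ is produced by a split, hence joins a node to a freshly introduced Steiner bead placed at that node's position, and therefore has length $0$, an integer. Thus I would show that passing from $T_S$ to $T_S^*$ creates at least $c-1$ new edges.

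First I would do the edge count. Writing $V_S$ for the number of Steiner points of $T_S$, the tree $T_S$ has $n+V_S-1$ edges while $T_S^*$ has $2n-3$ edges, so exactly $n-2-V_S$ new (zero-length) edges are introduced. Hence the corollary reduces to the purely combinatorial inequality $V_S\le n-1-c$.

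Next I would establish $V_S\le n-1-c$ using the full-component decomposition $C_1,\dots,C_c$ of $T_S$; these components partition $E(T_S)$ and meet only at terminals. Let $C_i$ span $n_i$ terminals and contain $\sigma_i$ Steiner points. Since $C_i$ is a tree on $n_i+\sigma_i$ nodes in which every terminal is a leaf and every Steiner point has degree at least three, a degree-sum count inside $C_i$ gives $\sigma_i\le n_i-2$, so $V_S=\sum_i\sigma_i\le\sum_i n_i-2c$. Because a terminal of degree $d$ in $T_S$ is a leaf of exactly the $d$ full components it lies in, $\sum_i n_i=\sum_{t\in N}\deg_{T_S}(t)$; and a global degree count --- twice the number of edges of $T_S$ is the total node degree, which is at least $\sum_{t\in N}\deg_{T_S}(t)+3V_S$ --- together with $\vert E(T_S)\vert=n+V_S-1$ yields $\sum_{t\in N}\deg_{T_S}(t)\le 2n-2-V_S$. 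Combining gives $V_S\le(2n-2-V_S)-2c$, i.e.\ $V_S\le n-1-c$, so $j\ge n-2-V_S\ge c-1$.

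Finally I would remove the ``$\max$'': from $V_S\ge0$ and $V_S\le n-1-c$ we get $c\le n-1$, whence $2n-c-3\ge n-4\ge0$ for $n\ge4$, while the cases $n\le 3$ are immediate; so $\max\{2n-c-3,0\}=2n-c-3$ and Proposition~\ref{mainUpperProp} completes the proof. I expect the only substantive point to be the estimate $V_S\le n-1-c$, and within it the thing to watch is that in a general normed plane Steiner points may have degree larger than three --- but that only makes $V_S$ smaller relative to the all-degree-three situation, so the bound, and hence the corollary, is unaffected. The remaining steps are bookkeeping.
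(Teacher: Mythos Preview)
Your proof is correct and follows the same overall strategy as the paper --- invoke Proposition~\ref{mainUpperProp} and show $j\ge c-1$ by counting the zero-length edges created in passing from $T_S$ to $T_S^*$ --- but the paper reaches $j\ge c-1$ far more directly. Instead of counting \emph{all} split-created edges as $n-2-V_S$ and then bounding $V_S$ via a full-component decomposition and a global degree count, the paper counts only the splits at \emph{terminals}: each terminal $x$ of degree $\deg(x)$ is split $\deg(x)-1$ times, contributing that many zero-length edges, and the identity $c=1+\sum_{x\in N}(\deg(x)-1)$ immediately yields $j\ge c-1$. Your detour through $V_S\le n-1-c$ is valid (and in fact your intermediate bound $j\ge n-2-V_S$ is sharper than $j\ge c-1$ whenever some Steiner point of $T_S$ has degree greater than three), but it is more machinery than the situation requires. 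One minor arithmetic slip: from $c\le n-1$ you obtain $2n-c-3\ge n-2$, not $n-4$, so the ``$\max$'' already disappears for $n\ge 2$; the paper does not treat this point explicitly.
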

\begin{proof}
Note that every terminal $x$ of degree $\deg(x)$ is split $\deg(x)-1$ times to produce $T_{S}^*$, i.e., each terminal $x$ produces $\deg(x)-1$
zero-length edges after all splits. Clearly also $c=\sum\limits_{x \in N}\{\deg(x)-1\}+1$.
\end{proof}

\begin{corollary}\label{EqCorollary} If $T_S$ has at most one edge with non-integer length then $\mathrm{beads}(T_S)=\mathrm{beads}(T_{\mathrm{opt}})$.
\end{corollary}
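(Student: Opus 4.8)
The plan is to obtain the corollary as a short consequence of Proposition~\ref{mainUpperProp} together with the optimality of $T_{\mathrm{opt}}$. The first thing I would record is that passing from $T_S$ to the full Steiner tree $T_S^*$ alters the multiset of edge lengths only by adjoining zeros: each split places the newly created Steiner bead at the same point as the node it is split from, so every edge introduced by splitting has length $0$. In particular a non-integer edge length of $T_S^*$ can only come from an edge of $T_S$, so $T_S$ and $T_S^*$ have the same number of non-integer-length edges.

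Now use the hypothesis that $T_S$ has at most one non-integer-length edge. Then $T_S^*$ also has at most one such edge, so the number $j$ of integer-length edges of $E(T_S^*)$ satisfies $j \geq m - 1 = 2n-4$. Substituting into Proposition~\ref{mainUpperProp} gives
\[
\mathrm{beads}(T_S) - \mathrm{beads}(T_{\mathrm{opt}}) \;\leq\; \max\{2n-4-j,\,0\} \;=\; 0 .
\]
On the other hand, beading $T_S$ produces a valid tree interconnecting $N$, so by the minimality defining $T_{\mathrm{opt}}$ we have $\mathrm{beads}(T_{\mathrm{opt}}) \leq \mathrm{beads}(T_S)$, i.e. the displayed difference is also nonnegative. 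The two inequalities force $\mathrm{beads}(T_S) = \mathrm{beads}(T_{\mathrm{opt}})$, as claimed.

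There is essentially no obstacle in this argument once Proposition~\ref{mainUpperProp} is available; it is purely a matter of feeding in the right value of $j$ and invoking optimality. The only point that deserves a moment's care is the justification that $j \geq 2n-4$, namely that converting $T_S$ into a full Steiner tree creates no new non-integer edge lengths — this is exactly the zero-length-edge property of the $T \mapsto T^*$ construction described earlier in the paper.
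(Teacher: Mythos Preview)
Your argument is correct and is exactly the intended derivation: the paper states the corollary without proof, as an immediate consequence of Proposition~\ref{mainUpperProp} obtained by plugging in $j\geq 2n-4$ together with the trivial lower bound from optimality of $T_{\mathrm{opt}}$. Your care in noting that the $T_S\mapsto T_S^*$ construction only adds zero-length edges (so that the hypothesis on $T_S$ transfers to the required bound on $j$) is appropriate and fully justified by the paper's description of splitting.
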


Du et al. \cite{bib5,bib15} provide approximations for the MSPT problem that give performance ratios with upper bounds of three in the Euclidean
plane and two in the rectilinear plane. Their algorithms are based on the MST heuristic and therefore run in polynomial time. If we rewrite our
performance difference to get the bounded ratio $\frac{\mathrm{beads}(T_S)}{\mathrm{beads}(T_{\mathrm{opt}})} \leq
1+\frac{2n-4}{\mathrm{beads}(T_{\mathrm{opt}})}$ we see that the performance ratio of the SMT heuristic has a smaller upper bound than the
heuristics of Du et al. when $\mathrm{beads}(T_{\mathrm{opt}}) > n-2$ in the Euclidean plane, and $\mathrm{beads}(T_{\mathrm{opt}}) > 2n-4$ in
the rectilinear plane. Since $\mathrm{beads}(T_{\mathrm{opt}})$ increases as the minimum distance between any pair of terminals increases, we
arrive at the intuitive fact that the performance of the SMT heuristic improves as the terminal configuration becomes more sparse. If $R$ was
not fixed then we would arrive at the same result by decreasing $R$. During this limiting process the upper bound of the ratio
$\frac{\mathrm{beads}(T_S)}{\mathrm{beads}(T_M)}$, where $T_M$ is an MST, tends towards the well-known Steiner ratio. This gives a limiting
upper bound of $\frac{\mathrm{beads}(T_S)}{\mathrm{beads}(T_M)} \leq \frac{\sqrt{3}}{2}$ in the Euclidean plane, which serves as a comparison
between the performances of the SMT heuristic and the standard MST heuristic.

We mention once again that the SMT heuristic does not run in polynomial time. However, for $n$ up to a few thousand nodes (uniformly distributed
in a square) the GeoSteiner algorithms will produce solutions in reasonable running time for the Euclidean and rectilinear plane~\cite{bib1}.
This makes the SMT heuristic a tool worthy of consideration for applications where optimization is required during an initialization process
(such as deployment). In fact, one should consider this heuristic for any process where the cost benefit of a more accurate algorithm justifies
a possible time delay.

It should also be noted that SMTs can be approximated arbitrarily closely in polynomial time. The polynomial-time approximation scheme (PTAS)
developed by Arora~\cite{biba1, biba2} works for any norm, and allows one to construct a solution to the SMT problem that is within a factor of
$1 + \epsilon$ from optimality in polynomial time for any fixed $\epsilon > 0$. In theory this gives a good polynomial-time heuristic for the
MSPT problem, obtained by replacing the SMT by its $1 + \epsilon$ approximation. There is, however, a difficulty with this approach in that the
degree of the polynomial for small values of $\epsilon$ is too large to make the algorithm practical.

\section{The Euclidean Plane}\label{sec2}
The aim of this section is to show that the performance difference from Proposition \ref{mainUpperProp} is best possible in the Euclidean plane.
We begin with a few definitions and preliminary results. Due to minimality of total length, any two adjacent edges of a Euclidean Steiner tree
meet at an angle of at least $120^\circ$. This implies that the degree of any terminal is no more than $3$, and the degree of any Steiner point
is exactly $3$. Let $T$ be a full Euclidean Steiner tree on a set of embedded terminals. To \textit{sprout} new terminals from a given terminal
$\mathbf{t}$ of $T$ with incident edge $e$ one replaces $\mathbf{t}$ by a Steiner point $\mathbf{s}$ and embeds two new terminals
$\mathbf{t}_1,\mathbf{t}_2$ adjacent to $\mathbf{s}$ such that the two new edges $\mathbf{st}_1$ and $\mathbf{st}_2$ each form $120^\circ$
angles with $e$ and with each other - see Figure \ref{figureSplit}. We denote by $L(T)$ the total Euclidean edge length of $T$. If $N$ is a set
of embedded terminals then $T_S$ will denote a Euclidean SMT on $N$ and $T_{\mathrm{opt}}$ will denote a Euclidean MSPT on $N$. As usual we let
$n=\vert N \vert$. The next proposition shows that we can use sprouting to create full SMTs with any given topology. It is a fundamental result
and is almost certainly known, but does not appear to have been explicitly written up in the literature before now.

\begin{figure}[htb]
\begin{center}

\includegraphics[scale=0.4]{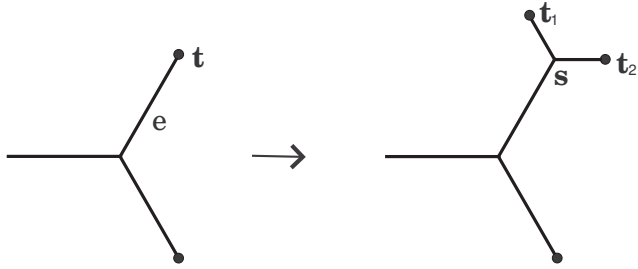}

\end{center}
\caption{Sprouting new terminals.  \label{figureSplit}}
\end{figure}

\begin{proposition} Given any full Steiner topology, there exists a set of embedded terminals $N$ such that the SMT for $N$ has the given topology
and is unique. Furthermore, such trees can be explicitly constructed for any given topology.
\end{proposition}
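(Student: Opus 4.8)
The plan is to argue by induction on $n=\vert N\vert$, realising the target topology by starting from a trivial configuration and repeatedly \emph{sprouting}. The base case $n=2$ is immediate: the only full topology is a single edge, any two points realise it, and the edge is trivially the unique SMT. For the inductive step, recall that every full Steiner topology on $n\ge 3$ terminals has (at least two) cherries; deleting the two leaves of a cherry and regarding their former common Steiner point as a terminal yields a full topology $\tau^-$ on $n-1$ terminals, and sprouting reverses this. So, given a target topology $\tau$, choose a cherry, let $\tau^-$ be the reduced topology, and apply the inductive hypothesis to obtain an embedded set $N^-$ whose unique SMT $T^-$ has topology $\tau^-$; let $t\in N^-$ be the terminal that must become the cherry's Steiner point, with incident edge $e$ of $T^-$. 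Define $N=(N^-\setminus\{t\})\cup\{t_1,t_2\}$ by sprouting $t_1,t_2$ from $t$ along the two rays making $120^\circ$ with $e$ (and with each other), at a common distance $\delta>0$ from $t$, where $\delta$ is a parameter to be fixed small. Writing $T$ for the result, one checks directly that $T$ realises $\tau$, that all of its angles are $120^\circ$ (the new Steiner point by construction, the rest inherited from $T^-$), and — provided $\delta$ is small enough that the two new short edges meet no other part of the tree — that $T$ is the unique relatively minimal tree of topology $\tau$ on $N$ (via Melzak's construction, which persists as a perturbation of the one yielding $T^-$), with $L(T)=L(T^-)+2\delta$. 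In particular $T$ is the shortest tree on $N$ with topology $\tau$.

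It remains to pick $\delta$ so that no other topology does as well, i.e. so that $T$ is the unique SMT on $N$. The engine is a stability fact worth isolating: if a terminal set admits a unique SMT, then so does every sufficiently close terminal set, with the same topology. This holds because there are only finitely many full topologies, each contributing via Melzak's construction a relatively minimal tree whose length is Lipschitz in the terminal positions and whose construction survives small perturbations; uniqueness at $N^-$ means $L(T^-)$ lies strictly below the lengths of all competitors, by some $\eta>0$, and this separation tolerates a perturbation of size $O(\delta)$. Now let $\hat T$ be any Steiner tree on $N$ with $L(\hat T)\le L(T)$, and examine how $t_1,t_2$ sit inside it. Contracting the part of $\hat T$ carrying $t_1,t_2$ back towards $t$ produces a tree on a terminal set that converges, as $\delta\to 0$, to $N^-$ with a terminal tripled at $t$; if the resulting outer topology is \emph{not} that of $T^-$, then $L(\hat T)\ge L(T^-)+\eta-O(\delta)>L(T^-)+2\delta=L(T)$ for $\delta$ small, a contradiction. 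So $t_1,t_2$ must be clustered near $t$ with outer structure $\tau^-$, leaving two sub-cases. Either they form a cherry whose Steiner point lies near $t$: Melzak-contracting this cherry to the equilateral point of $\{t_1,t_2\}$ (a point $O(\delta)$ from $t$), the stability fact forces $\hat T=T$. Or $t_1t_2$ is an edge of $\hat T$, with one endpoint, say $t_1$, of degree at least two: deleting this edge leaves a tree on $(N^-\setminus\{t\})\cup\{t_1\}$, whose length is, to first order, $L(T^-)+\tfrac12\delta$ (the directional derivative of the SMT length at $t$, in the direction of $t_1$, being $\tfrac12$ in the relevant frame), so $L(\hat T)\ge \sqrt3\,\delta+L(T^-)+\tfrac12\delta-o(\delta)=L(T^-)+(\sqrt3+\tfrac12)\delta-o(\delta)>L(T^-)+2\delta=L(T)$ for $\delta$ small, again a contradiction. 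Hence $\hat T=T$, which completes the induction; unwinding the recursion gives the explicit construction (a sequence of sprouts with small enough parameters).

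The main obstacle is this last step: certifying that for a small enough sprouting parameter no competing topology ties or beats $T$. The two delicate points are (a) the first-order analysis of the SMT length as the sprouted terminals are pulled back, needed both to invoke perturbation-stability of uniqueness on the contracted terminal sets and to pin down the constant $\tfrac12$ in the ``$t_1t_2$ is an edge'' estimate so that it genuinely beats $2$, and (b) verifying that the case distinction on how $t_1,t_2$ embed in $\hat T$ is exhaustive. By contrast, the reduction to the inductive hypothesis via cherry removal, the verification that $T$ is relatively minimal and realises $\tau$, and the observation that the whole process is explicitly constructible are routine once the sprouting mechanics and Melzak's construction are in hand.
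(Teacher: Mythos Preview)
Your inductive strategy --- reduce the target topology by deleting a cherry, realise the smaller topology by the inductive hypothesis, then sprout --- is exactly the paper's approach. The divergence is entirely in how you certify that the sprouted tree $T$ beats every competitor, and here the paper's argument is both simpler and more explicit than yours.

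The paper strengthens the inductive hypothesis to carry a \emph{quantitative} gap: for the realisation $T^-$ of $\tau^-$ on $N^-$ there is an explicit $f_{n-1}>0$ such that every Steiner tree on $N^-$ with topology $\neq \tau^-$ has length at least $L(T^-)+f_{n-1}$. One then sprouts with the concrete choice $\delta=f_{n-1}/4$. For any competing Steiner tree $\hat T$ on $N$ with topology $\neq\tau$, the single observation that $\hat T\cup\{\mathbf{s}\mathbf{t}_n\}$ (where $\mathbf{s}$ is the old terminal position) is a network interconnecting $N^-$ whose induced topology cannot be $\tau^-$ gives
\[
L(\hat T)+\delta \;\ge\; L(T^-)+f_{n-1} \;=\; L(T^-)+4\delta,
\]
so $L(\hat T)\ge L(T^-)+3\delta > L(T^-)+2\delta = L(T)$, and one may take $f_n=\delta=f_{n-1}/4$. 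Starting at $n=4$ with an explicit rectangle yields $f_i=(3\sqrt{3}-5)/4^{\,i-4}$, so the construction comes with computable sprouting lengths rather than an unspecified ``$\delta$ small enough''.

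This device dissolves both difficulties you flag. No perturbation-stability lemma or limiting argument is needed, and no local case analysis on how $t_1,t_2$ sit inside $\hat T$ is required. Your case split (cherry versus edge $t_1t_2$) is in fact not evidently exhaustive: even granting that the ``outer'' topology is $\tau^-$, a competing Steiner tree need not be full, so $t_1$ and $t_2$ could attach to distinct Steiner points, or one of them could have degree two or three with neighbours other than the other, and it takes further work to reduce all such pictures to your two cases. The first-order length computation in your edge case is correct but, once the gap $f_{n-1}$ is available, superfluous. The moral is that strengthening the inductive statement to track the margin $f_i$ turns a delicate asymptotic argument into three lines of arithmetic.
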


\begin{proof}
Let $G_n$ be a full Steiner topology on $n$ terminals. We will show how a suitable set of embedded terminals $N_n$ can be constructed by
induction on $n$, where the inductive step involves sprouting new terminals. Note that the construction is trivial if $n=1,2$ or $3$. The
inductive claim is as follows.

\textbf{Claim}: For any full Steiner topology $G_i$ on $i$ terminals (with $i\geq 4$), there exists a set of embedded terminals $N_i$ and a real
number $f_i >0$ such that \begin{enumerate}
    \item an SMT, $T_i$, for $N_i$ has topology $G_i$, and
    \item if $T_i'$ is a Steiner tree for $N_i$ such that the topology of $T_i'$ is not $G_i$, then $L(T_i') - L(T_i) \geq f_i$.
\end{enumerate}

For the base case of the claim ($i=4$), choose $N_4$ to be the four points with coordinates $(\pm 1, \pm \sqrt{3}/2)$. It is easily checked that
the SMT $T_4$ for $N_4$ has Steiner points $(\pm 1/2, 0)$ and length $5$ (see Figure \ref{figureBC}). The shortest Steiner tree $T'_4$ with a
different topology is full with Steiner points $(0, \pm (\sqrt{3}/2- 1/\sqrt(3)))$  and length $L(T'_4)= 3\sqrt{3}$. So we can choose $f_4=
3\sqrt{3}-5>0$. Up to relabelling of the terminals, there is only one full topology for $i=4$, so this completes the base case.

\begin{figure}[htb]
\begin{center}

\includegraphics[scale=0.4]{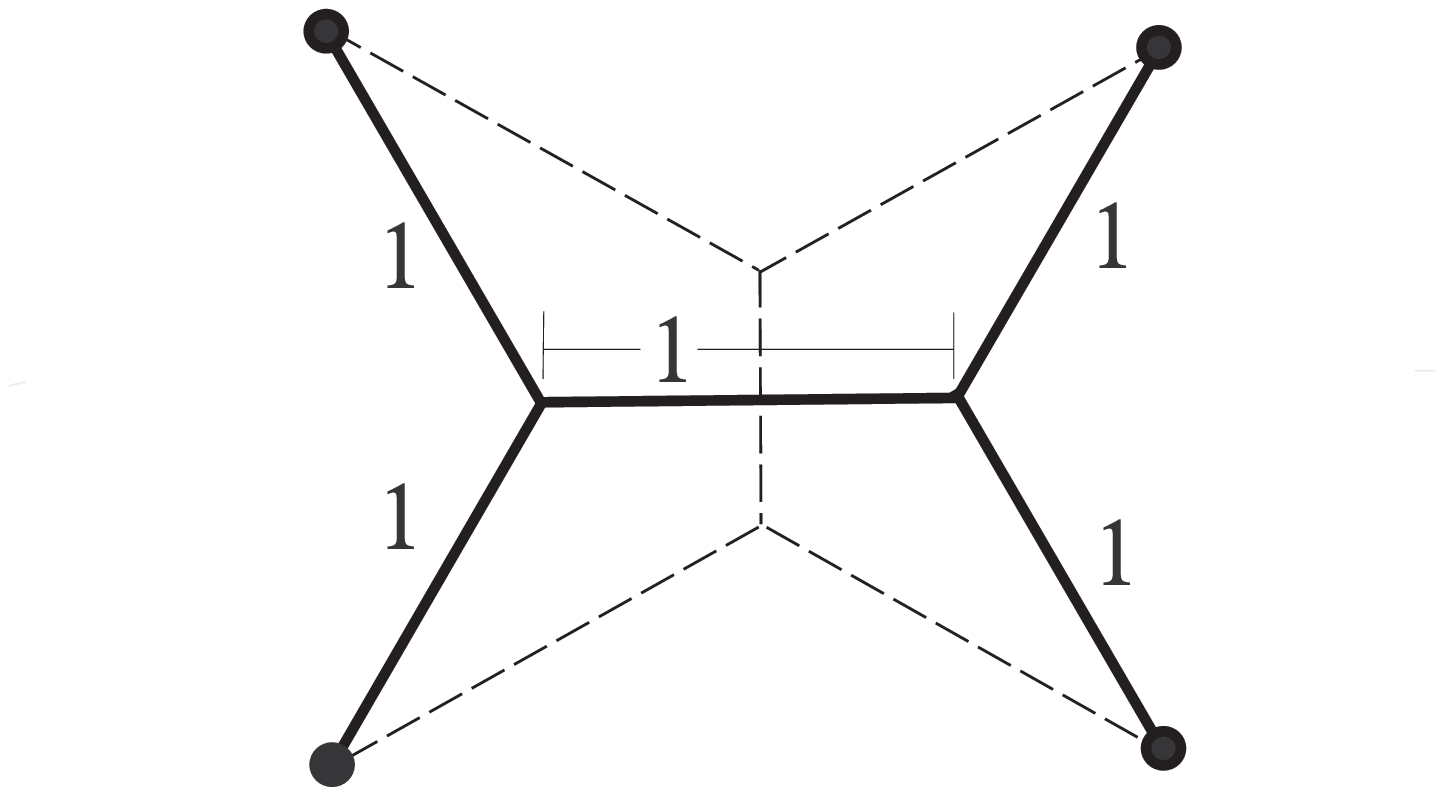}

\end{center}
\caption{Base case.  \label{figureBC}}
\end{figure}

We now establish the inductive step for ($i=n$), where we assume that the claim holds for $i=n-1$. Given a full Steiner topology $G_n$ ($n>4$),
this topology contains at least one cherry. Replacing such a cherry by a single terminal $t^*$ gives a full Steiner topology $G_{n-1}$ on $n-1$
terminals. By the inductive assumption there exists an embedded terminal set $N_{n-1}$ with unique SMT $T_{n-1}$ which has topology $G_{n-1}$
and a corresponding constant $f_{n-1} > 0$. Let $\mathbf{t}$ be the embedded terminal corresponding to $t^*$ and create a new Steiner tree as
follows.

We sprout new terminals $\mathbf{t}_n$ and $\mathbf{t}_{n-1}$ from $\mathbf{t}$, with $\mathbf{t}$ replaced by a Steiner point $\mathbf{s}$,
such that $\vert\mathbf{st}_{n-1}\vert=\vert\mathbf{st}_n\vert= f_{n-1}/4$. Let this new tree be $T_n$ with embedded terminal set $N_n$. By
construction, $T_n$ has the correct topology $G_n$.

Let $T_n'$ be any Steiner tree (but not necessarily an SMT) on $N_n$ with topology \emph{not} $G_n$. Suppose we collapse $\mathbf{t}_n$ and
$\mathbf{t}_{n-1}$ onto the point $\mathbf{s}$ (fixing all other nodes in the network), and consider the topology $G$ of the resulting network.
$G$ is a topology on $n-1$ terminals, but may be different from $G_{n-1}$, indeed $G$ is not necessarily a tree. If $G=G_{n-1}$, then $T_n'$
also has the same topology as $T_n$, which by convexity and the fact that $T_n'$ is a Steiner tree implies that $T_n'=T_n$ (see Theorem 1.3 of
\cite{bib14}); this is a contradiction and hence $G\not=G_{n-1}$. It follows from this that, if we consider the network $T_n' \cup
\{\mathbf{st}_n\}$ (which interconnects $N_{n-1}$), we have $L(T_n') + \vert \mathbf{st}_n \vert \geq L(T_{n-1})+ f_{n-1}$. This implies that
\begin{eqnarray*}
  L(T_n') & \geq & L(T_{n-1})+ f_{n-1}- \vert \mathbf{st}_n \vert \\
   &=&  L(T_{n-1})+ 3\vert \mathbf{st}_n \vert = L(T_{n})+ \vert \mathbf{st}_n \vert.
\end{eqnarray*}
Hence, we can choose $f_n= f_{n-1}/4 <   L(T_n') - L(T_n)$.

The claim (and lemma) now follow. Furthermore, the iterative algorithm for constructing a suitable set of embedded terminals for any required
Steiner topology is constructive with $f_i= (3\sqrt{3}-5)/4^{i-4}$ for each $i \geq 4$. \noindent
\end{proof}

\begin{proposition}Let $G_n$ be a full Steiner topology on $n$ terminals. There exists an embedded set of terminals $N$ in the Euclidean plane
such that $\mathrm{beads}(T_S)=\mathrm{beads}(T_{\mathrm{opt}})+2n-4$ and $T_S$ has topology $G_n$.
\end{proposition}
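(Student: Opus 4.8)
The plan is to exhibit a single embedded terminal set $N$ for which the SMT $T_S$ has the prescribed topology $G_n$ and meets the bound of Proposition~\ref{mainUpperProp} with equality; the key observation is that one of the two inequalities needed is supplied by that proposition itself. Namely, suppose we arrange that no edge of $T_S$ has integer length --- a generic condition, achievable by a tiny perturbation of the terminals that does not change the topology. Then $j=0$ in Proposition~\ref{mainUpperProp}, so $\mathrm{beads}(T_S)-\mathrm{beads}(T_{\mathrm{opt}})\le 2n-4$, i.e.\ $\mathrm{beads}(T_{\mathrm{opt}})\ge \mathrm{beads}(T_S)-(2n-4)$. Hence it is enough to produce an $N$ such that (i) the SMT of $N$ has topology $G_n$, (ii) no SMT edge is of integer length, and (iii) there is some tree $T'$ on $N$ (Steiner points allowed) with $\mathrm{beads}(T')=\mathrm{beads}(T_S)-(2n-4)$; for then $T_{\mathrm{opt}}$ attains this value and the difference is exactly $2n-4$.

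For (i) and (ii) I would argue as in the preceding proposition: realise $G_n$ as a full Steiner tree all of whose angles are $120^{\circ}$, and certify it is the SMT of its leaf set by exhibiting a positive length gap to every competing Steiner topology. The remaining freedom is in the $2n-3$ edge lengths --- once one Steiner point and an orientation are fixed, any positive choice of the edge lengths produces a legal $120^{\circ}$ full tree --- and these are to be chosen so that (iii) becomes possible, i.e.\ so that the leaves of $T_S$ fall into clusters that can be re-assembled into a tree $T'$ with the prescribed bead count. This is already how the small cases run: for $n=3$, take the vertices of an equilateral triangle of circumradius a little over $1$, so $T_S$ is three segments of length $1+\varepsilon$ to the Fermat point ($\mathrm{beads}(T_S)=4$), while joining two sides of the triangle gives a candidate with $\mathrm{beads}=2$, so $4-2=2=2\cdot3-4$; for $n=4$, take the corners of a suitable non-square rectangle, where $T_S$ is the ``double-$Y$'' with all five edges in $(1,2)$ ($\mathrm{beads}(T_S)=7$), and re-embedding the two Steiner points of that topology so that four of the five edges drop below length $1$ gives a candidate with $\mathrm{beads}=3$, so $7-3=4=2\cdot4-4$.

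The obstacle --- and the step I expect to absorb most of the work --- is to carry out (iii) for an \emph{arbitrary} topology while keeping (i). One cannot simply require every edge of $T_S$ to lie in $(1,2)$: for $n=5$ the only full topology is a caterpillar, and if all seven edges lie strictly in $(1,2)$ then the pendant edge at the central Steiner point forces its terminal to lie at distance strictly greater than $2$ from every other terminal, which costs two beads in any MSPT and makes the difference fall short of $2n-4$. So the edge lengths of $T_S$ must mix short edges --- to keep the cherry terminals close enough to be reconnected cheaply --- with longer, ``stretched'' edges, and the stretched edges are also what tends to make the length-gap argument in (i) go through. The natural way to organise this is inductively, recovering $G_n$ from $G_{n-1}$ by re-introducing a contracted cherry and choosing the lengths of the two new edges so that $\mathrm{beads}(T_S)$ goes up by exactly the amount that raises the difference to $2n-4$ while $\mathrm{beads}(T_{\mathrm{opt}})$ moves by the complementary amount --- with a careful account of how the beading of the optimal tree changes, and a fresh check of the length gap, at each step. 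Reconciling ``the new edges must be short enough to keep $T_S$ optimal'' with ``the bead counts must move by precisely the right amounts'' is the delicate point.
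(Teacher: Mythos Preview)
Your framework is right --- use Proposition~\ref{mainUpperProp} for the inequality $\mathrm{beads}(T_{\mathrm{opt}})\ge\mathrm{beads}(T_S)-(2n-4)$, then exhibit a candidate $T'$ hitting that value --- and your $n=4$ remark about ``re-embedding the two Steiner points of that topology so that four of the five edges drop below length~$1$'' is exactly the idea the paper uses. But you then abandon it and go looking for a different mechanism (short-versus-stretched edges, cherry-by-cherry induction), and this is where the gap lies: you never carry out (iii) for arbitrary $G_n$, and the path you sketch toward it is harder than necessary.

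The paper's construction stays with the \emph{same} topology $G_n$ for $T'$ and avoids the $(1,2)$ trap by making all edges long. Build $T_S$ by sprouting (as in the preceding proposition) so that one designated edge has length $a_1-\varepsilon$ and every other edge has length $b_i+\varepsilon_i$, with the $a_1,b_i$ large integers and the $\varepsilon_i$ small positive reals chosen hierarchically. Now root the tree at the terminal incident to the $a_1-\varepsilon$ edge and displace each Steiner point outward along the line from its parent's \emph{original} position, in breadth-first order. Each such move pushes the two child edges from $b+\varepsilon$ to $b-\varepsilon'$ while the parent edge stays of the form $a-\varepsilon''$; so each of the $2n-4$ edges other than the root edge drops from $\lceil b_i+\varepsilon_i\rceil=b_i+1$ to $\lceil b_i-\varepsilon_i'\rceil=b_i$, giving precisely a $2n-4$ saving. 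The hierarchy of $\varepsilon_i$'s (each small relative to its parent's) is what makes the displacement step work at every internal node. Your $n=5$ obstruction dissolves once the integer parts are large: the savings come from shaving fractional parts, not from collapsing edges below length~$1$, so there is no need to keep cherry terminals close or to change topology.
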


\begin{proof}
We construct an SMT $T_S$ with topology $G_n$ by repeatedly sprouting terminals, starting from a full Steiner tree on three terminals called the
\textit{base}. By the previous proposition any full Steiner topology can be produced in this way. Note that we can create a base with edges of
any length by simply intersecting the end-points of three line segments at one common point such that every pair of segments forms an angle of
$120^\circ$ (and we have complete freedom to do this since we are constructing an SMT by choosing positions for the terminals). By making the
edges of the base large enough, it is clear that we can construct $T_S$ such that every edge-length has the form $a_i \pm \varepsilon_i$, where
$a_i$ is an integer of order at least two and $\varepsilon_i$ has any predefined value between zero and one. $T_S$ is then converted into an
MSPT by a sequence of displacements (which we describe below) of the Steiner points, where displacements do not change the original topology
$G_n$.

In $T_S$, let $\mathbf{s}_0$ be a Steiner point adjacent to a terminal $\mathbf{t}$ and two other nodes $\mathbf{v}_1,\mathbf{v}_2$ where
edge-lengths are preselected as follows: $\vert \mathbf{ts}_0 \vert=a_1-\varepsilon$, $\vert \mathbf{s}_0\mathbf{v}_1 \vert = \vert
\mathbf{s}_0\mathbf{v}_2 \vert = b_1+\varepsilon_1$ for large integers $a_1,b_1$ and $0<\varepsilon,\varepsilon_1<1$. In the first step (Figure
\ref{figureS1Perturb}) we displace $\mathbf{s}_0$ along the line through $\mathbf{t}$ and $\mathbf{s}_0$ and in the direction of the vector
$\overrightarrow{\mathbf{ts}_0}$. We displace until $\vert \mathbf{ts}_0 \vert=a_1-\varepsilon^\prime$ and $\vert \mathbf{s}_0\mathbf{v}_1 \vert
= \vert \mathbf{s}_0\mathbf{v}_2 \vert = b_1-\varepsilon_1^\prime$ for some $0<\varepsilon^\prime,\varepsilon_1^\prime<1$. Clearly this is
possible as long as we preselect $\varepsilon_1$ to be small enough compared to $\varepsilon$.

\begin{figure}[htb]
\begin{center}

\includegraphics[scale=0.4]{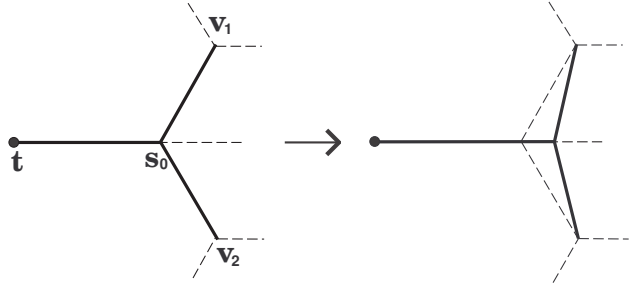}

\end{center}
\caption{First step of the displacement sequence.  \label{figureS1Perturb}}
\end{figure}

We now displace all other Steiner points in a depth-first or breadth-first order rooted at $\mathbf{t}$. Suppose that in the process we have
reached the Steiner point $\mathbf{s}$ with parent $\mathbf{s}^\prime$ and children $\mathbf{u}_1,\mathbf{u}_2$. We displace $\mathbf{s}$ along
the line through $\mathbf{s}$ and the point $\mathbf{p}$ and in the direction $\overrightarrow{\mathbf{ps}}$, where $\mathbf{p}$ is the position
$\mathbf{s}^\prime$ had \textit{before} its displacement; see Figure \ref{figurePerturb}. If $\vert \mathbf{ss}^\prime \vert =a-\varepsilon_1$
then we preselect $\vert \mathbf{su}_1 \vert = \vert \mathbf{su}_2 \vert =b+\varepsilon_2$ for $0<\varepsilon_2<1$. We select $\varepsilon_2$
small enough so that the displacement of $\mathbf{s}$ produces the lengths $\vert \mathbf{ss}^\prime \vert =a-\varepsilon_1^\prime$ and $\vert
\mathbf{su}_1 \vert = \vert \mathbf{su}_2 \vert =b-\varepsilon_2^\prime$, for some $0<\varepsilon_1^\prime,\varepsilon_2^\prime<1$. We continue
this process until we have displaced all Steiner points. Call the resultant tree $T$. Note that the edges of $T_S$ were preselected so that one
edge has length $a_1-\varepsilon$ and every other edge $e_i$ has length $b_i+\varepsilon_i$. After all displacements the first edge has length
$a_1-\varepsilon^\prime$ and every other $e_i$ has length $b_i-\varepsilon_i^\prime$. Clearly then $\mathrm{beads}(T_S)=\mathrm{beads}(T)+2n-4$
and $T$ is an MSPT.
\end{proof}

\begin{figure}[htb]
\begin{center}

\includegraphics[scale=0.4]{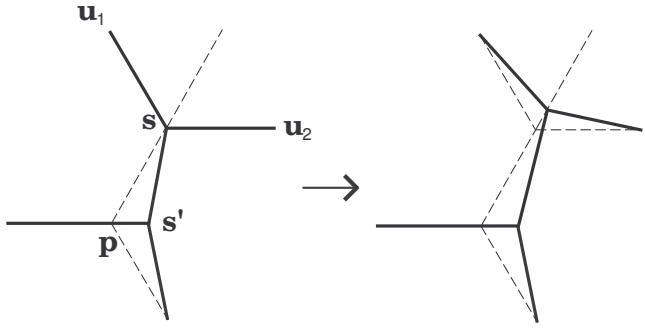}

\end{center}
\caption{General step of the displacement sequence.  \label{figurePerturb}}
\end{figure}

\section{Minkowski Planes} \label{sec3}
A \textit{Minkowski plane} is a two-dimensional real normed space $M=(\mathbb{R}^2,||\cdot||_M)$ with \textit{unit ball}
$B=\{\mathbf{x}:||\mathbf{x}||_M\leq1\}$. We denote the metric induced by $M$ by $d_M(\mathbf{x},\mathbf{y})=||\mathbf{x}-\mathbf{y}||_M$.
Examples of Minkowski planes include the Euclidean plane and the rectilinear plane, where the unit balls are the circle and the $45^\circ$
rotated square respectively. The unit ball of a Minkowski plane is always convex, centrally symmetric and bounded in the Euclidean norm.
Conversely, any such convex body is the unit ball of a Minkowski plane. The boundary of a ball $B$ is denoted by $\mathrm{bd}(B)$ and its
interior by $\mathrm{int}(B)$.

The question arises as to whether the upper bound from Proposition \ref{mainUpperProp} is best possible in all Minkowski planes. In the
three-terminal case we show that the upper bound can be improved for a given Minkowski plane if and only if the unit ball is a parallelogram.
Well-known Minkowski planes with this property are the $L_1$ (rectilinear) and $L_\infty$ planes.

Let $N=\{\mathbf{t}_i:i=1,2,3\}$ be a set of embedded terminals and let $T_S$ be a $d_M$-SMT on $N$. Many of the propositions below will refer
to $T_S^*$ instead of $T_S$ in order to maintain generality. Recall that this convention may lead to zero-length edges, and consequently also to
balls of zero radius.

Throughout this section we let $L$ denote a Minkowski plane with a parallelogram unit ball. The corresponding metric is denoted by $d_L$. If
$\mathbf{t}$ is a terminal point in the plane, we denote by $l_1(\mathbf{t})$ and $l_2(\mathbf{t})$ the Euclidean straight lines passing through
$\mathbf{t}$ and parallel to the major diagonal and minor diagonal, respectively, of the parallelogram defining the unit ball of $L$. Minkowski
balls $\{B_i:i=1,2,3\}$ \textit{tessellate} at $\mathbf{x}$ if $B_i \cap B_j$ is a point or a Euclidean line segment whenever $i\neq j$, and
$\bigcap B_i=\{\mathbf{x}\}$. The point $\mathbf{x}$ is called a \textit{tessellation point} of $N$ if there exists a set of balls
$\{B_i:i=1,2,3\}$, with $B_i$ centered at $\mathbf{t}_i$, that tessellate at $\mathbf{x}$. The next proposition is a generalization of a
well-known result (\textit{cf}. \cite{bib11}) on three-terminal rectilinear SMTs.

\begin{proposition}\label{tessellate}Let $N=\{\mathbf{t}_i:i=1,2,3\}$ be a set of embedded terminals. Then there exists a $d_L$-SMT $T_S$ on $N$
such that the Steiner point of $T_S^*$ coincides with the intersection of the median of $\{l_1(\mathbf{t}_i)\}$ and the median of
$\{l_2(\mathbf{t}_i)\}$.
\end{proposition}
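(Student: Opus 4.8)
The plan is to reduce the statement to the known rectilinear case by exploiting the fact that a norm with a parallelogram unit ball is the rectilinear norm in disguise. Let $\mathbf{w}_1$ and $\mathbf{w}_2$ be the vectors from the centre of the unit ball $B$ of $L$ to the endpoints of, respectively, the major and minor diagonals, so that $B$ is the parallelogram with vertices $\pm\mathbf{w}_1,\pm\mathbf{w}_2$. Every point $\mathbf{x}$ has a unique representation $\mathbf{x}=\alpha(\mathbf{x})\mathbf{w}_1+\beta(\mathbf{x})\mathbf{w}_2$, and since the vertices of $B$ are exactly the points attaining $|\alpha|+|\beta|=1$ extremally, convexity and homogeneity give $||\mathbf{x}||_L=|\alpha(\mathbf{x})|+|\beta(\mathbf{x})|$; equivalently, $L$ is carried onto the rectilinear plane by the linear map sending $\mathbf{w}_i$ to the $i$-th standard basis vector. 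Consequently $d_L(\mathbf{x},\mathbf{y})=|\alpha(\mathbf{x})-\alpha(\mathbf{y})|+|\beta(\mathbf{x})-\beta(\mathbf{y})|$. In these coordinates the line $l_1(\mathbf{t})$, being parallel to the major diagonal (the direction $\mathbf{w}_1$), is $\{\beta=\beta(\mathbf{t})\}$, and $l_2(\mathbf{t})$ is $\{\alpha=\alpha(\mathbf{t})\}$. Hence the median of $\{l_1(\mathbf{t}_i)\}$ is the line $\beta=\mathrm{med}\{\beta(\mathbf{t}_1),\beta(\mathbf{t}_2),\beta(\mathbf{t}_3)\}$, the median of $\{l_2(\mathbf{t}_i)\}$ is the line $\alpha=\mathrm{med}\{\alpha(\mathbf{t}_1),\alpha(\mathbf{t}_2),\alpha(\mathbf{t}_3)\}$, and the two medians meet in the single point $\mathbf{m}$ with $\alpha(\mathbf{m})=\mathrm{med}\{\alpha(\mathbf{t}_i)\}$ and $\beta(\mathbf{m})=\mathrm{med}\{\beta(\mathbf{t}_i)\}$.

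Next I would show that $\mathbf{m}$ minimises $\mathbf{p}\mapsto\sum_{i=1}^3 d_L(\mathbf{p},\mathbf{t}_i)$. Using the coordinate formula, this sum equals $\sum_i|\alpha(\mathbf{p})-\alpha(\mathbf{t}_i)|+\sum_i|\beta(\mathbf{p})-\beta(\mathbf{t}_i)|$, so the two coordinates decouple, and each sum is minimised exactly when the corresponding coordinate of $\mathbf{p}$ equals the median of the three terminal values -- the elementary fact that a median minimises the sum of absolute deviations, which remains valid when two of the three values coincide. Therefore $\mathbf{m}$ is a minimiser, with value $V:=\sum_i d_L(\mathbf{m},\mathbf{t}_i)$.

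I would then argue that the tree $T_S$ on vertex set $\{\mathbf{t}_1,\mathbf{t}_2,\mathbf{t}_3,\mathbf{s}\}$ with $\mathbf{s}=\mathbf{m}$ adjacent to all three terminals is a $d_L$-SMT on $N$. A short degree count shows that any tree interconnecting three terminals, with Steiner points of degree at least three, is either such a star (one Steiner point) or a path $\mathbf{t}_i-\mathbf{t}_j-\mathbf{t}_k$ through a terminal; the former has length $\sum_i d_L(\mathbf{s},\mathbf{t}_i)\geq V$ for any position of $\mathbf{s}$, and the latter has length $\sum_i d_L(\mathbf{t}_j,\mathbf{t}_i)\geq V$, so $T_S$, which has length $V$, is shortest. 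Finally I would read off $T_S^*$: if $\mathbf{m}\notin\{\mathbf{t}_1,\mathbf{t}_2,\mathbf{t}_3\}$ then $T_S$ is already a full Steiner tree, so $T_S^*=T_S$ and its unique Steiner point is $\mathbf{m}$; if $\mathbf{m}=\mathbf{t}_j$, then $\mathbf{t}_j$ has degree two in $T_S$ and the split producing $T_S^*$ replaces it by a degree-one terminal and a degree-three Steiner point joined by a zero-length edge, the Steiner point still sitting at $\mathbf{m}$. Either way the Steiner point of $T_S^*$ is the intersection of the two medians, as claimed.

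The only delicate points, as I see it, are bookkeeping rather than substance: pinning down the identification $||\cdot||_L=|\alpha|+|\beta|$ together with the correct matching of the two diagonal directions to the two families of lines $l_1,l_2$, and handling the degenerate configurations -- $\mathbf{m}$ landing on a terminal, or two terminals sharing a diagonal coordinate -- where one must appeal to the $T_S^*$ convention and the zero-length edges (and zero-radius balls) that were explicitly allowed just before the statement.
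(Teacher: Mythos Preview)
Your argument is correct, but it takes a different route from the paper's. You linearise: send the parallelogram unit ball to the $\ell_1$ unit ball via the change of basis $\mathbf{x}\mapsto(\alpha(\mathbf{x}),\beta(\mathbf{x}))$, observe that the two families of diagonal-parallel lines become the coordinate lines, and reduce the Fermat problem to two independent one-dimensional median problems. The paper instead works geometrically inside $L$ itself: it observes that $\sum_i|\mathbf{t}_i\mathbf{x}|\geq\tfrac12\sum_{i<j}|\mathbf{t}_i\mathbf{t}_j|$ with equality precisely when the three balls of radii $|\mathbf{t}_i\mathbf{x}|$ \emph{tessellate} at $\mathbf{x}$, and then checks directly (using the position of $\mathbf{t}_1$ relative to a supporting line $l_0$ through the median intersection) that tessellation does occur at that point.

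What each buys: your coordinate argument is clean and immediately recognisable as the classical rectilinear Fermat point, and it handles the degenerate cases (coinciding medians, $\mathbf{m}$ landing on a terminal) with no extra work. The paper's proof, on the other hand, delivers more than the bare statement: it exhibits the median intersection explicitly as a \emph{tessellation point}, i.e.\ a point where all three triangle inequalities $|\mathbf{t}_i\mathbf{x}|+|\mathbf{t}_j\mathbf{x}|\geq|\mathbf{t}_i\mathbf{t}_j|$ are tight. That property is what drives the next two results (Corollary~\ref{corTessellate} and the key inequality $p_1\leq\min\{p_2,p_3\}$ in Proposition~\ref{propPara}), so with your approach you would still need to verify tessellation separately---easy in your coordinates, but an extra step.
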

\begin{proof}
Let $\mathbf{x}$ be a point in the plane. We wish to minimize the function $f=|\mathbf{t}_1\mathbf{x}|+|\mathbf{t}_2\mathbf{x}|+
|\mathbf{t}_3\mathbf{x}|$ where all inequalities $|\mathbf{t}_i\mathbf{x}|+|\mathbf{t}_j\mathbf{x}| \geq |\mathbf{t}_i\mathbf{t}_j|$, with
$1\leq i<j\leq 3$, hold by the triangle inequality. Therefore a minimum would occur if $|\mathbf{t}_i\mathbf{x}|+|\mathbf{t}_j\mathbf{x}| =
|\mathbf{t}_i\mathbf{t}_j|$ for every $1\leq i<j\leq 3$; equivalently, a minimum would occur if the balls $\{B_i:i=1,2,3\}$, with $B_i$ centered
at $\mathbf{t}_i$ and of radius $|\mathbf{t}_i\mathbf{x}|$, tessellate at $\mathbf{x}$. We show that this happens if $\mathbf{x}$ is the
intersection of the median of $\{l_1(\mathbf{t}_i)\}$ and the median of $\{l_2(\mathbf{t}_i)\}$.

Suppose, without loss of generality, that $l_1(\mathbf{t}_2)$ is the median of $\{l_1(\mathbf{t}_i)\}$ and $l_2(\mathbf{t}_3)$ is the median of
$\{l_2(\mathbf{t}_i)\}$, and let $\mathbf{x}$ be the intersection of these two lines. Let $l_0$ be the line through $\mathbf{x}$ and parallel to
the side of $B_3$ that intersects $B_2$ at $\mathbf{x}$ only - see Figure \ref{figureMedians}. Then $\mathbf{t}_1$ must lie on the opposite side
of $l_0$ to $\mathbf{t}_2$ and $\mathbf{t}_3$ (since $l_1(\mathbf{t}_2)$ and $l_2(\mathbf{t}_3)$ are medians). Therefore $B_1 \cap B_2 \subset
l_0$ and $B_1 \cap B_3 \subset l_0$ and the result follows.
\end{proof}

\begin{figure}[htb]
\begin{center}

\includegraphics[scale=0.4]{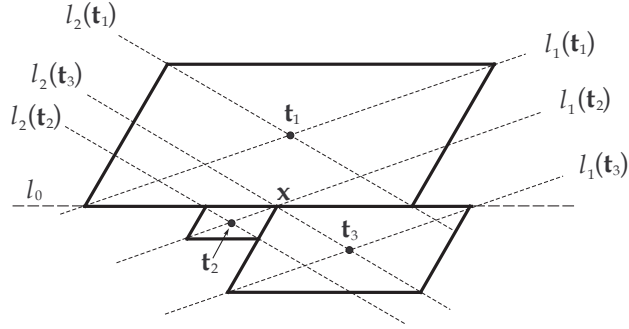}

\end{center}
\caption{Median diagonals $l_1(\mathbf{t}_2)$ and $l_2(\mathbf{t}_3)$ intersect at Steiner point $\mathbf{x}$. \label{figureMedians}}
\end{figure}

\begin{corollary}\label{corTessellate}Let $N=\{\mathbf{t}_i:i=1,2,3\}$ be a set of embedded terminals and let $T_S$ be a $d_L$-SMT on $N$.
Then the Steiner point of $T_S^*$ is the unique tessellation point of $N$.
\end{corollary}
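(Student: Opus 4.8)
\textbf{Proof proposal for Corollary \ref{corTessellate}.}

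The plan is to use Proposition \ref{tessellate} to establish existence of a tessellation point realised by an SMT, and then to argue uniqueness in two stages: first that \emph{any} $d_L$-SMT has its Steiner point at a tessellation point, and second that there is only one tessellation point. For the first stage I would observe that the proof of Proposition \ref{tessellate} actually shows something stronger than it states: the function $f(\mathbf{x})=|\mathbf{t}_1\mathbf{x}|+|\mathbf{t}_2\mathbf{x}|+|\mathbf{t}_3\mathbf{x}|$ attains its global minimum precisely at those points $\mathbf{x}$ for which the three balls $B_i$ centred at $\mathbf{t}_i$ with radii $|\mathbf{t}_i\mathbf{x}|$ tessellate at $\mathbf{x}$, since $f(\mathbf{x})$ equals $\tfrac12\sum_{i<j}(|\mathbf{t}_i\mathbf{x}|+|\mathbf{t}_j\mathbf{x}|)\ge \tfrac12\sum_{i<j}|\mathbf{t}_i\mathbf{t}_j|$ with equality iff every triangle inequality is tight, which is exactly the tessellation condition. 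Since the Steiner point of $T_S^*$ minimises $f$ (collapsing a full Steiner tree on three terminals to its unique interior node can only decrease total length, and the SMT minimises length), its Steiner point must be a tessellation point of $N$.

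For uniqueness of the tessellation point, I would argue that every tessellation point must be the intersection of the median of $\{l_1(\mathbf{t}_i)\}$ with the median of $\{l_2(\mathbf{t}_i)\}$, which is a single point. Suppose $\mathbf{x}$ is a tessellation point, with balls $B_i$ centred at $\mathbf{t}_i$ tessellating at $\mathbf{x}$. Because $B$ is a parallelogram, the boundary of each $B_i$ near $\mathbf{x}$ consists of segments parallel to the two sides of $B$ (equivalently, to the two diagonal directions $l_1,l_2$). The tessellation condition forces the three balls to meet pairwise along these diagonal directions and to have disjoint interiors, which in turn forces the terminals to be separated by the diagonal lines through $\mathbf{x}$ in exactly the configuration that makes $l_1(\mathbf{x})$ the median of $\{l_1(\mathbf{t}_i)\}$ and $l_2(\mathbf{x})$ the median of $\{l_2(\mathbf{t}_i)\}$ — i.e. the "middle" terminal in each of the two diagonal coordinates lies on the respective line through $\mathbf{x}$. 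Hence $\mathbf{x}$ is pinned down.

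The main obstacle I anticipate is the degenerate-ball subtlety flagged in the text: $T_S^*$ may contain zero-length edges, so one or more of the balls $B_i$ may have radius zero, and $\mathbf{x}$ may coincide with a terminal. I would handle this by checking that the median-intersection characterisation still makes sense and remains a single point when, say, two of the lines $l_1(\mathbf{t}_i)$ coincide (the median is then that common line), and that a radius-zero ball "tessellates" trivially with the others in the sense that a point is a degenerate line segment. A secondary technical point is justifying that the minimiser of $f$ is actually the interior node of $T_S^*$ rather than lying on an edge or at a terminal in a way inconsistent with the full-tree structure; this follows from the reformulation in Section 2, since $\mathrm{beads}$ and total length are governed by $T_S^*$ and splitting introduces only zero-length edges, so the Steiner point of $T_S^*$ may be taken to be any global minimiser of $f$.
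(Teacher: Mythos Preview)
Your approach is correct but considerably more elaborate than the paper's. The paper's entire proof is the single observation that the system $|\mathbf{t}_i\mathbf{x}|+|\mathbf{t}_j\mathbf{x}|=|\mathbf{t}_i\mathbf{t}_j|$, $1\le i<j\le 3$, viewed as three linear equations in the three unknowns $r_i=|\mathbf{t}_i\mathbf{x}|$, has a unique solution. Since any two tessellation points therefore determine the \emph{same} balls $B_i$, and by definition $\bigcap B_i$ is a singleton, the tessellation point is unique. Existence, and the identification with the Steiner point of $T_S^*$, is inherited from Proposition~\ref{tessellate}.

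Your route instead reproves a converse of Proposition~\ref{tessellate}, arguing geometrically that any tessellation point must sit at the median intersection. This works, but the linear-algebra argument on the radii is shorter and sidesteps all the casework about which diagonal directions the pairwise intersections lie along; it also makes your degenerate-ball discussion unnecessary, since zero radii are handled by the linear system without modification.

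One point worth tightening in your write-up: you assert that equality in $f(\mathbf{x})\ge\tfrac12\sum_{i<j}|\mathbf{t}_i\mathbf{t}_j|$ ``is exactly the tessellation condition.'' Strictly, equality only forces $r_i+r_j=|\mathbf{t}_i\mathbf{t}_j|$ for each pair; the tessellation definition additionally demands $\bigcap B_i=\{\mathbf{x}\}$. The gap closes once uniqueness of the radii is established: any minimiser $\mathbf{x}$ of $f$ has $|\mathbf{t}_i\mathbf{x}|=r_i$ and hence lies in $\bigcap B_i$, while Proposition~\ref{tessellate} supplies a point $\mathbf{x}_0$ with $\bigcap B_i=\{\mathbf{x}_0\}$, forcing $\mathbf{x}=\mathbf{x}_0$. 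So the conclusion stands, but the justification you give for that step is not quite the right one.
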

\begin{proof}
The set of linear equations $|\mathbf{t}_i\mathbf{x}|+|\mathbf{t}_j\mathbf{x}| = |\mathbf{t}_i\mathbf{t}_j|$, for $1\leq i < j \leq 3$, has a
unique solution.
\end{proof}

Given three terminals in the plane and a unit ball parallelogram $B$, the \textit{enclosing diagonalized parallelogram} is the smallest
parallelogram whose sides are parallel to the major and minor diagonals of $B$ and which includes all the terminals on its boundary. The next
result also has an analogue in the rectilinear plane (\textit{cf}. \cite{bib11}).

\begin{corollary}The total length of a three-terminal $d_L$-SMT is equal to half the perimeter of the enclosing diagonalized parallelogram.
\end{corollary}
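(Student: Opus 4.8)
The plan is to reduce the claim to a one-dimensional identity about three real numbers, by passing to coordinates adapted to the diagonals of $B$. First I would recall from Corollary~\ref{corTessellate} that the Steiner point $\mathbf{x}$ of $T_S^*$ is the unique tessellation point of $N$, so (as established in the proof of Proposition~\ref{tessellate}) it satisfies $|\mathbf{t}_i\mathbf{x}|+|\mathbf{t}_j\mathbf{x}| = |\mathbf{t}_i\mathbf{t}_j|$ for all $1\le i<j\le 3$. Since $T_S^*$ is a full Steiner tree on three terminals with single Steiner point $\mathbf{x}$, and since passing from $T_S$ to $T_S^*$ leaves total length unchanged, $L(T_S) = |\mathbf{t}_1\mathbf{x}|+|\mathbf{t}_2\mathbf{x}|+|\mathbf{t}_3\mathbf{x}|$. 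Adding the three tessellation equalities gives
\[
2L(T_S) \;=\; \sum_{1\le i<j\le 3}\bigl(|\mathbf{t}_i\mathbf{x}|+|\mathbf{t}_j\mathbf{x}|\bigr) \;=\; \sum_{1\le i<j\le 3}|\mathbf{t}_i\mathbf{t}_j|,
\]
that is, twice the SMT length equals the $d_L$-perimeter of the (possibly $d_L$-degenerate) triangle $\mathbf{t}_1\mathbf{t}_2\mathbf{t}_3$. It then remains to show that this triangle perimeter equals the $d_L$-perimeter of the enclosing diagonalized parallelogram.

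For that step I would introduce diagonal coordinates: let $\mathbf{d}_1,\mathbf{d}_2$ be the vectors from the centre of $B$ to the endpoints of its major and minor diagonals, so that $\pm\mathbf{d}_1,\pm\mathbf{d}_2$ are the vertices, hence the extreme points, of $B$. Because these four points are exactly the extreme points of the parallelogram, the point $\lambda\mathbf{d}_1+\mu\mathbf{d}_2$ has $L$-norm $|\lambda|+|\mu|$ for all reals $\lambda,\mu$; equivalently, assigning each point $\mathbf{p}$ the pair $(\lambda_{\mathbf{p}},\mu_{\mathbf{p}})$ of its coordinates with respect to the basis $\{\mathbf{d}_1,\mathbf{d}_2\}$ (and a fixed origin), we have $d_L(\mathbf{p},\mathbf{q}) = |\lambda_{\mathbf{p}}-\lambda_{\mathbf{q}}|+|\mu_{\mathbf{p}}-\mu_{\mathbf{q}}|$. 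In these coordinates a line parallel to the major (resp.\ minor) diagonal is a line of constant $\mu$ (resp.\ $\lambda$), so the enclosing diagonalized parallelogram of $N$ is the coordinate rectangle $[\lambda_{\min},\lambda_{\max}]\times[\mu_{\min},\mu_{\max}]$, where $\lambda_{\min}=\min_i\lambda_{\mathbf{t}_i}$, and so on; its four sides run in the diagonal directions and have $d_L$-lengths $\lambda_{\max}-\lambda_{\min}$ and $\mu_{\max}-\mu_{\min}$, so its $d_L$-perimeter is $2\bigl[(\lambda_{\max}-\lambda_{\min})+(\mu_{\max}-\mu_{\min})\bigr]$.

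Finally I would split the triangle perimeter coordinatewise and apply the elementary identity $\sum_{i<j}|a_i-a_j| = 2(\max_i a_i - \min_i a_i)$ for any three reals $a_1,a_2,a_3$ (immediate by ordering them). Applied to the $\lambda$-coordinates and to the $\mu$-coordinates of $\mathbf{t}_1,\mathbf{t}_2,\mathbf{t}_3$ and added, this yields $\sum_{i<j}|\mathbf{t}_i\mathbf{t}_j| = 2(\lambda_{\max}-\lambda_{\min})+2(\mu_{\max}-\mu_{\min})$, which is exactly the $d_L$-perimeter of the enclosing diagonalized parallelogram; combined with $2L(T_S)=\sum_{i<j}|\mathbf{t}_i\mathbf{t}_j|$ from the first paragraph, the result follows.

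The only delicate point I anticipate is the bookkeeping in the $d_L$-degenerate configuration, in which one terminal lies on a $d_L$-geodesic between the other two: then the triangle $\mathbf{t}_1\mathbf{t}_2\mathbf{t}_3$ is degenerate, one pair of sides of the ``enclosing parallelogram'' may collapse, and a terminal need not literally lie on its boundary, so one should read the enclosing diagonalized parallelogram simply as the bounding coordinate rectangle $[\lambda_{\min},\lambda_{\max}]\times[\mu_{\min},\mu_{\max}]$ — with that reading every identity above holds verbatim. The one structural input, that the parallelogram norm decomposes additively along its two diagonal directions, is immediate from the extreme-point description of $B$, so beyond ``$B$ is a parallelogram'' no case analysis on the shape of $B$ is required.
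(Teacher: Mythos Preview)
Your argument is correct. The paper states this corollary without proof, leaving it as an immediate consequence of Proposition~\ref{tessellate} and Corollary~\ref{corTessellate}; your derivation---summing the three tessellation identities to obtain $2L(T_S)=\sum_{i<j}|\mathbf{t}_i\mathbf{t}_j|$, and then passing to diagonal coordinates (which is exactly the reduction to the rectilinear case the paper alludes to via \cite{bib11}) to identify this with the $d_L$-perimeter of the bounding box---is precisely the intended argument made explicit.
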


The following proposition is the first main result of this section. It shows that the upper bound from Proposition \ref{mainUpperProp} with
$n=3$ is not strict for parallelogram-based Minkowski planes.

\begin{proposition}\label{propPara}Let $T_S$ be a $d_L$-SMT and let $T_{\mathrm{opt}}$ be a $d_L$-MSPT on the embedded terminals $\{\mathbf{t}_i:i=1,2,3\}$.
Then $\mathrm{beads}(T_S)\leq \mathrm{beads}(T_{\mathrm{opt}})+1$.
\end{proposition}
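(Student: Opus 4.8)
The plan is to prove the stronger fact that $\mathrm{beads}(T)\ge \mathrm{beads}(T_S)-1$ for \emph{every} tree $T$ interconnecting $N=\{\mathbf t_1,\mathbf t_2,\mathbf t_3\}$; applying this with $T=T_{\mathrm{opt}}$ gives the proposition. Let $\mathbf x$ be the Steiner point of $T_S^{*}$ and set $r_i=d_L(\mathbf t_i,\mathbf x)$, so that $\mathrm{beads}(T_S)=-2+\sum_{i=1}^{3}\lceil r_i\rceil$. The one geometric ingredient I would use is that, by Corollary~\ref{corTessellate}, $\mathbf x$ is the tessellation point of $N$, so the balls $B_i=\{\mathbf z:d_L(\mathbf t_i,\mathbf z)\le r_i\}$ meet pairwise in at most a line segment while already meeting at $\mathbf x$; since the intersection of two $d_L$-balls has non-empty interior whenever the sum of their radii strictly exceeds the distance between their centres, this forces
\[
d_L(\mathbf t_i,\mathbf t_j)=r_i+r_j\qquad(1\le i<j\le 3),
\]
the reverse inequality being the triangle inequality. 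This is exactly where the parallelogram hypothesis enters: in a strictly convex plane such as the Euclidean plane the Steiner point of a full SMT need not lie on a geodesic between two terminals, the identity fails, and (correspondingly) the bound there can be as large as $2n-4$.

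Next I would observe that on three terminals every interconnecting tree $T$ has the same form after conversion to a full Steiner tree: $T^{*}$ is a single Steiner bead $\mathbf y$ joined to $\mathbf t_1,\mathbf t_2,\mathbf t_3$, where $\mathbf y$ is an honest Steiner point if $T$ is a star and $\mathbf y$ is the degree-two terminal (producing one zero-length edge) if $T$ is a path --- these being the only two topologies available, since a full Steiner tree on three terminals has exactly one Steiner point. Writing $s_i=d_L(\mathbf t_i,\mathbf y)$, we thus have $\mathrm{beads}(T)=-2+\sum_{i=1}^{3}\lceil s_i\rceil$, and it suffices to show $\sum_{i}\lceil s_i\rceil\ge\sum_{i}\lceil r_i\rceil-1$.

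For this, the triangle inequality through $\mathbf y$ together with the identity above gives $s_i+s_j\ge d_L(\mathbf t_i,\mathbf t_j)=r_i+r_j$ for each of the three pairs. Hence
\[
\lceil s_i\rceil+\lceil s_j\rceil \ge \lceil s_i+s_j\rceil \ge \lceil r_i+r_j\rceil \ge \lceil r_i\rceil+\lceil r_j\rceil-1,
\]
using $\lceil a\rceil+\lceil b\rceil\ge\lceil a+b\rceil$, then $s_i+s_j\ge r_i+r_j$, then Lemma~\ref{Lemma1}. Summing over the three pairs $\{i,j\}$ yields $2\sum_{i}\lceil s_i\rceil\ge 2\sum_{i}\lceil r_i\rceil-3$, and since both sides are integers this forces $\sum_{i}\lceil s_i\rceil\ge\sum_{i}\lceil r_i\rceil-1$. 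Therefore $\mathrm{beads}(T)\ge\mathrm{beads}(T_S)-1$, and in particular $\mathrm{beads}(T_S)\le\mathrm{beads}(T_{\mathrm{opt}})+1$.

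I expect the only real obstacle to be the clean justification of the identity $d_L(\mathbf t_i,\mathbf t_j)=r_i+r_j$ from the tessellation property (equivalently, that $\mathbf x$ lies on a $d_L$-geodesic between every pair of terminals); after that the argument is routine ceiling arithmetic driven by Lemma~\ref{Lemma1}. A minor point to verify is the degenerate situation in which $\mathbf x$ or $\mathbf y$ is one of the terminals, but the inequalities $s_i+s_j\ge d_L(\mathbf t_i,\mathbf t_j)$ and the bead-count formula are still valid there, so no separate treatment is needed. Note that this route yields the bound $1$ directly and bypasses the case analysis on integer-length edges used in the proof of Proposition~\ref{mainUpperProp}.
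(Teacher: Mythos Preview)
Your proof is correct and rests on the same geometric fact the paper uses: because $\mathbf{x}$ is the tessellation point, the Steiner point of $T_S^*$ lies on a $d_L$-geodesic between every pair of terminals, i.e.\ $r_i+r_j=d_L(\mathbf{t}_i,\mathbf{t}_j)$. The paper invokes this identity too (phrased as ``$|a_1|+|a_3|=|\mathbf{t}_1\mathbf{t}_3|$ since $\mathbf{t}_1,\mathbf{s},\mathbf{t}_3$ is a shortest path''), and also uses a second consequence of tessellation that you do not need, namely that at most one of the three edges can shorten when $\mathbf{s}$ is replaced by $\mathbf{s}'$; from there it bounds the bead difference via the pointwise relation $p_1\le\min\{p_2,p_3\}$ and a chain of ceiling estimates. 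Your route is more symmetric: you apply the pairwise inequality $\lceil s_i\rceil+\lceil s_j\rceil\ge\lceil r_i\rceil+\lceil r_j\rceil-1$ to all three pairs, sum, and exploit the parity of $2\sum\lceil r_i\rceil-3$ to gain the last unit. This avoids singling out the decreasing edge and yields the statement for \emph{every} interconnecting tree $T$ in one stroke; the trade-off is that the paper's version makes visible the structural reason the bound drops by one (only a single edge of $T_S$ can be shortened), a fact which the paper later reuses when motivating the conjectures at the end of Section~\ref{sec3}.
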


\begin{proof}
Suppose that the Steiner point of $T_S^*$ is $\mathbf{s}$ and its edges are $a_i=\mathbf{t}_i\mathbf{s}$ for $i \in \{1,2,3\}$. Let
$\mathbf{s^\prime}$ be the Steiner bead of $T_{\mathrm{opt}}^*$ and let $e_i=\mathbf{t}_i\mathbf{s^\prime}$ for $i \in \{1,2,3\}$. Since
$\mathbf{s}$ is the tessellation point of $\{\mathbf{t}_i\}$, at most one inequality from $\vert e_i\vert < \vert a_i \vert$, $i\in \{1,2,3\}$
can be true (note that if none of these inequalities are true then $\mathrm{beads}(T_S)= \mathrm{beads}(T_{\mathrm{opt}})$, and we are done).
Suppose w.l.o.g that $\vert e_1 \vert< \vert a_1 \vert$. Let $p_1=\vert a_1 \vert - \vert e_1\vert$ and let $p_i=\vert e_i \vert - \vert a_i
\vert$ for $i=\{2,3\}$.

\noindent\textbf{Claim}: $p_1 \leq \min{\{p_2,p_3 \}}$.\\
Clearly $\vert a_1 \vert + \vert a_3\vert=\vert \mathbf{t}_1\mathbf{t}_3\vert$ since $\mathbf{t}_1,\mathbf{s},\mathbf{t}_3$ is a shortest path
between $\mathbf{t}_1$ and $\mathbf{t}_3$. Also, by using the triangle inequality in $\triangle \mathbf{t}_1\mathbf{s}^\prime\mathbf{t}_3$ we
obtain $\vert \mathbf{t}_1\mathbf{t}_3\vert\leq \vert e_1\vert+\vert e_3\vert$. Therefore $\vert a_1\vert+\vert a_3\vert \leq (\vert
a_1\vert-p_1)+(\vert a_3\vert+p_3)$ so that $p_1\leq p_3$. Similarly, $p_1 \leq p_2$ and this proves the claim.

We now have:
\begin{eqnarray*}
\mathrm{beads}(T_S)- \mathrm{beads}(T_{\mathrm{opt}}) &=& \sum\limits_{i=1}^3\lceil\vert a_i \vert\rceil-\sum\limits_{i=1}^3\lceil\vert e_i \vert\rceil\\
&=& \lceil\vert a_1 \vert\rceil - \lceil\vert e_1\vert\rceil-\sum\limits_{i\in\{2,3\}}\{\lceil \vert e_i \vert\rceil - \lceil \vert a_i\vert\rceil \}\\
&=& \lceil\vert e_1 \vert+p_1\rceil - \lceil\vert e_1\vert\rceil-\sum\limits_{i\in\{2,3\}}\{\lceil \vert e_i \vert\rceil -
\lceil \vert e_i\vert-p_i\rceil \}\\
&\leq & \lceil p_1 \rceil -\sum\limits_{i\in\{2,3\}}\{\lceil p_i \rceil -1\}\\
&\leq & 0-\lceil \max\{p_2,p_3\} \rceil +2 \hspace{0.5 in} \mathrm{(since \ } 0<p_1 \leq \min{\{p_2,p_3 \}}\mathrm{)}\\
&<& 2.
\end{eqnarray*}
\end{proof}

To prove a converse of the previous proposition we first show that Corollary \ref{corTessellate} is unique to Minkowski planes with
parallelogram unit balls. We find that some three-terminal sets in Minkowski planes with hexagon unit balls do have tessellation points, but
that this is not true in general. For any non-parallelogram based Minkowski plane we then construct a three-terminal example that achieves the
upper bound from Proposition \ref{mainUpperProp}.

Two points on the boundary of a ball $B$ form a \textit{diametric pair} if the Euclidean straight line passing through the points also passes
through the center of $B$. A point $\mathbf{z}$ on a ball $B_i$ is \textit{equivalent} to a point $\mathbf{z}^\prime$ on a ball $B_j$ if and
only if, by translating $B_i$ so that its center coincides with the center of $B_j$, it is possible to rescale $B_i$ so that $\mathbf{z}$
coincides with $\mathbf{z}^\prime$. Let $\{\mathbf{t}_i:i=1,2,3\}$ be a set of embedded terminals with tessellation point $\mathbf{s}$ in an
arbitrary non-parallelogram-based Minkowski plane $M$, and suppose that the balls $\{B_i\}$ tessellate at $\mathbf{s}$.

\begin{lemma}Suppose that $B_i \cap B_j=\{\mathbf{s}\}$ for some $i,j \in \{1,2,3\}$. Then the point that forms a diametric pair with
$\mathbf{s}$ on $B_i$ is equivalent to $\mathbf{s}$ when considered as a point on $B_j$.
\end{lemma}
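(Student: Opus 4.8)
The statement unwinds to a concrete geometric claim. Write $r_i=|\mathbf{t}_i\mathbf{s}|$ and $r_j=|\mathbf{t}_j\mathbf{s}|$; we may assume both are positive, since if a radius is zero the ball degenerates to $\{\mathbf{s}\}$ and there is nothing to prove. By central symmetry of $B_i$ about $\mathbf{t}_i$, the point of $\mathrm{bd}(B_i)$ forming a diametric pair with $\mathbf{s}$ is the reflection $\mathbf{z}=2\mathbf{t}_i-\mathbf{s}$, so $\mathbf{z}-\mathbf{t}_i=-(\mathbf{s}-\mathbf{t}_i)$. Unwinding the definition of equivalence, ``$\mathbf{z}$ on $B_i$ is equivalent to $\mathbf{s}$ on $B_j$'' means exactly that $\mathbf{z}-\mathbf{t}_i$ and $\mathbf{s}-\mathbf{t}_j$ are positive scalar multiples of one another; equivalently, that $\mathbf{s}-\mathbf{t}_i$ and $\mathbf{s}-\mathbf{t}_j$ point in opposite directions, i.e. that $\mathbf{s}$ lies in the interior of the Euclidean segment $\mathbf{t}_i\mathbf{t}_j$. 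This is what I will prove.

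First I would extract a supporting line. Since $B_i\cap B_j=\{\mathbf{s}\}$, the two convex bodies have disjoint interiors, so there is a line $\ell$ through $\mathbf{s}$ with $B_i$ and $B_j$ lying in opposite closed half-planes bounded by $\ell$; in particular $\ell$ supports each of $B_i,B_j$ at $\mathbf{s}$. Pick Euclidean coordinates so that $\mathbf{s}$ is the origin, $\ell$ is the horizontal axis, $B_i$ lies in the upper half-plane and $B_j$ in the lower. Since $B_i=\mathbf{t}_i+r_iB$ and the horizontal line supports $B_i$ from below at $\mathbf{s}$, a one-line computation shows that the boundary point $\mathbf{u}_i:=(\mathbf{s}-\mathbf{t}_i)/r_i$ of $B$ minimises the vertical coordinate over $B$; symmetrically $\mathbf{u}_j:=(\mathbf{s}-\mathbf{t}_j)/r_j$ maximises it. By central symmetry of $B$, the point $-\mathbf{u}_i$ also maximises the vertical coordinate over $B$. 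Hence it suffices to show $\mathbf{u}_j=-\mathbf{u}_i$: then $\mathbf{s}-\mathbf{t}_j=r_j\mathbf{u}_j=-r_j\mathbf{u}_i=(r_j/r_i)(\mathbf{t}_i-\mathbf{s})$, which is precisely the relation wanted.

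The set of vertical maximisers of $B$ is either a single point or a boundary segment. If it is a single point then $\mathbf{u}_j$ and $-\mathbf{u}_i$ coincide and we are done. The remaining case — $B$ has a ``top edge'' parallel to $\ell$, and hence, by central symmetry, a parallel ``bottom edge'' — is where the single-point hypothesis does real work, and I expect it to be the only delicate point. Here the bottom edge of $B_i$ and the top edge of $B_j$ both lie on $\ell$ (they are the vertical extremiser segments at height $0$), and since $B_i\cap B_j=\{\mathbf{s}\}$ these two segments of $\ell$ meet only at $\mathbf{s}$; this forces $\mathbf{s}$ to be an endpoint of each and the two segments to extend to opposite sides of $\mathbf{s}$ along $\ell$. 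Writing $[\mathbf{a},\mathbf{b}]$ for the bottom edge of $B$ (with $\mathbf{a}$ to the left, so the top edge is $[-\mathbf{b},-\mathbf{a}]$), one checks the cases $\mathbf{u}_i=\mathbf{a}$ and $\mathbf{u}_i=\mathbf{b}$: in each, the direction in which $\mathbf{t}_i+r_i[\mathbf{a},\mathbf{b}]$ extends from $\mathbf{s}$ is determined, and the ``opposite sides'' condition then pins down the corresponding endpoint of $\mathbf{t}_j+r_j[-\mathbf{b},-\mathbf{a}]$, giving $\mathbf{u}_j=-\mathbf{u}_i$ in both cases. (Note that the argument never uses that the unit ball of $M$ fails to be a parallelogram; that hypothesis is presumably kept only because the lemma is invoked in that setting.) This completes the proof.
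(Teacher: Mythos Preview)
Your proof is correct and is essentially a fully detailed version of the paper's one-line justification, which simply reads ``This follows from the convexity and central symmetry of the balls.'' You use convexity to obtain the separating supporting line and central symmetry to match the extremisers, which is exactly the content behind that sentence; your additional case analysis for when $B$ has an edge parallel to $\ell$ is a genuine detail the paper suppresses, and your remark that the non-parallelogram hypothesis is not actually needed here is accurate.
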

\begin{proof}
This follows from the convexity and central symmetry of the balls.
\end{proof}

As a consequence of the previous lemma there exist distinct $i,j,k \in \{1,2,3\}$ such that $B_i \cap B_j$ and $B_i \cap B_k$ are Euclidean line
segments (as opposed to single points only corresponding to $\mathbf{s}$). Suppose w.l.o.g that $B_1 \cap B_2$ and $B_1 \cap B_3$ are line
segments.

\begin{lemma}$B_2 \cap B_3$ is also a line segment.
\end{lemma}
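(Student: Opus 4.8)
The plan is to argue entirely inside the unit ball $B$, via the identification of each $B_i$ with a homothet of $B$. Fix $\mathbf{s}$ at the origin. Since $\mathbf{s}\in\mathrm{bd}(B_i)$ and $B_i$ is the ball of radius $r_i=d_M(\mathbf{t}_i,\mathbf{s})$ about $\mathbf{t}_i$, we may write $B_i=r_i(B-b_i)$ for a unique $b_i\in\mathrm{bd}(B)$, with $\mathbf{t}_i=-r_ib_i$; the hypothesis that $B_1\cap B_2$ and $B_1\cap B_3$ are genuine segments forces $r_1,r_2,r_3>0$, so this is legitimate, and the tangent cone of $B_i$ at $\mathbf{s}$ equals the tangent cone $K(b_i)$ of $B$ at $b_i$. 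Two facts feed the argument. First, as in the proof of Proposition~\ref{tessellate}, tessellation at $\mathbf{s}$ is equivalent to the triangle inequalities $d_M(\mathbf{t}_i,\mathbf{s})+d_M(\mathbf{t}_j,\mathbf{s})=d_M(\mathbf{t}_i,\mathbf{t}_j)$ holding for all three pairs; by the equality case of the Minkowski triangle inequality this says precisely that $b_i$ and $-b_j$ lie on a common face of $B$ for every $i\neq j$. Second, since $B_1\cap B_2$ is a positive-length segment it lies on a common supporting line of the two (homothetic, convex) bodies with $B_1$ and $B_2$ on opposite sides; hence $B$ has an edge $E$ with $b_1,-b_2\in E$, and likewise an edge $E'$ with $b_1,-b_3\in E'$ (antipodal edges of $B$ are parallel, which is what matches the two sides up).

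Now suppose for contradiction that $B_2\cap B_3=\{\mathbf{s}\}$. The first step is to show this forces $b_2=-b_3$, and that this common point is a vertex of $B$. Let $F$ be a common face of $b_2$ and $-b_3$. If $F$ is a segment, then near $\mathbf{s}$ the body $B_2$ has a flat edge along the direction of $F$ and $B_3$ has a flat edge along the direction of $-F$ (parallel to $F$), both lying on the same line through $\mathbf{s}$ and on opposite sides of it; these two edges overlap in a positive-length segment unless $\mathbf{s}$ is an endpoint of each and they emanate from $\mathbf{s}$ in opposite directions, which a short check shows happens exactly when $b_2$ and $-b_3$ are the same endpoint of $F$, i.e.\ $b_2=-b_3$, a vertex of $B$. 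If $F$ is a point then $b_2=-b_3$ outright; if this point were not a vertex then $K(b_2)$ and $K(b_3)=-K(b_2)$ would be opposite half-planes, meeting in a line, again contradicting $B_2\cap B_3=\{\mathbf{s}\}$. So in all cases $c:=b_2=-b_3$ is a vertex of $B$.

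It remains to deduce that $B$ is a parallelogram. Since $-c=-b_2$ lies on the edge $E$ and $-c$ is a vertex of $B$, the point $-c$ must be an endpoint of $E$; similarly $c=-b_3$ is an endpoint of $E'$. One checks $b_1\notin\{c,-c\}$ (otherwise an edge of $B$ would contain an antipodal pair of points, hence the centre) and $E\neq E'$ (otherwise that edge would contain both $c$ and $-c$); since $E$ and $E'$ are distinct edges both containing $b_1$, the point $b_1$ is their common endpoint. Thus $E=[b_1,-c]$ and $E'=[b_1,c]$, whence $-E=[-b_1,c]$ and $-E'=[-b_1,-c]$. These four segments form a closed quadrilateral with vertices $b_1,c,-b_1,-c$ in cyclic order, and since $E\parallel -E$ and $E'\parallel -E'$ it is a parallelogram; as it already accounts for all of $\mathrm{bd}(B)$, the body $B$ is that parallelogram --- contradicting the standing assumption. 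Hence $B_2\cap B_3$ is a positive-length Euclidean segment.

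The main obstacle I expect is the local analysis at $\mathbf{s}$ in the second paragraph: one must keep track of the fact that $\mathbf{s}$ may be an interior point or an endpoint of the relevant edges of $B_2$ and of $B_3$, and control the orientations carefully enough to pin down exactly the configuration in which the two edges fail to overlap. The remainder is bookkeeping with faces of $B$, but it relies essentially on central symmetry (antipodal edges parallel) and on the fact that a parallelogram is the unique centrally symmetric polygon whose edge set is $\{E,E',-E,-E'\}$.
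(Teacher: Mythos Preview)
Your argument is correct and reaches the same contradiction as the paper --- that $B$ would have to be a parallelogram --- but by a rather different route. The paper's proof stays local at $\mathbf{s}$: it invokes the preceding lemma (which in your notation already gives $b_2=-b_3$) to see that $\mathbf{s}$ sits at antipodal positions on $B_2$ and $B_3$, then counts exactly four maximal boundary segments of $\bigcup_i\mathrm{bd}(B_i)$ emanating from $\mathbf{s}$, observes that opposite pairs must have the same slope, and appeals (rather informally) to central symmetry to conclude that only parallelogram balls can produce such a configuration. You instead transport everything to the unit ball via the points $b_i$ and work globally with the face lattice of $B$: from the segment hypotheses on $B_1\cap B_2$ and $B_1\cap B_3$ you extract actual edges $E,E'$ of $B$, pin down their endpoints as $b_1,\pm c$, and exhibit $\mathrm{bd}(B)$ explicitly as the closed quadrilateral $E\cup E'\cup(-E)\cup(-E')$. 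Your version is more explicit and self-contained (you effectively re-derive the preceding lemma via the equality case of the triangle inequality), while the paper's is shorter but leans on a pictorial ``only parallelograms can do this'' step.

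One small soft spot, which you rightly flag: in the ``$F$ is a point'' sub-case, the remark that the tangent cones $K(b_2)$ and $-K(b_2)$ meet in a line does not by itself contradict $B_2\cap B_3=\{\mathbf{s}\}$, since at a strictly convex boundary point the body meets its tangent line only in that single point. What actually rescues the step is information you have already established in the first paragraph: $b_2\in -E$ and $-b_3=b_2\in E'$, so $b_2$ lies on an edge, and if it were not a vertex it would lie in the relative interior of that edge, whence $B_2$ and $B_3$ genuinely overlap in a segment through $\mathbf{s}$. (Equivalently, $b_2\in(-E)\cap E'$ with $-E\neq E'$ --- since otherwise $b_1\in E\cap(-E)=\emptyset$ --- forces $b_2$ to be a vertex directly, and the tangent-cone sentence can be dropped.)
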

\begin{proof}
With the aim of producing a contradiction we assume that $B_2 \cap B_3=\{\mathbf{s}\}$. Note that there exist exactly four maximal-length line
segments in $\bigcup \{\mathrm{bd}\left(B_i\right)\}$ that have $\mathbf{s}$ as an endpoint. We list these segments in any clockwise order as
$\{S_i:0 \leq i \leq 3\}$. Since $\mathbf{s}$ forms a diametric pair on $B_2$ and $B_3$, $S_i$ and $S_{(i+2)\mathrm{mod}4}$ have the same
gradient for any $i \in \{0,...,3\}$. By central symmetry, the only possible balls that can produce such a configuration of line segments are
parallelograms, which is a contradiction.
\end{proof}

A point $\mathbf{x}$ on $\mathrm{bd}(B)$ is called a \textit{corner} if the intersection of any neighborhood of $\mathbf{x}$ with
$\mathrm{bd}(B)$ is not a Euclidean straight line segment.

\begin{lemma}The point $\mathbf{s}$ is a corner of every member of $\{B_i\}$.
\end{lemma}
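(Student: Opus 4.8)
The plan is to argue by contradiction: suppose $\mathbf{s}$ is \emph{not} a corner of some $B_i$, relabel so that this happens for $B_1$, and derive that the unit ball $B$ of $M$ must be a parallelogram, contradicting the standing hypothesis on $M$.

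First I would record a few easy consequences of the two preceding lemmas. Since $B_1\cap B_2$, $B_1\cap B_3$ and $B_2\cap B_3$ are all nondegenerate line segments and distinct balls meeting in a segment must have disjoint interiors, each such segment lies on a common supporting line of the two balls and in $\mathrm{bd}(B_i)\cap\mathrm{bd}(B_j)$. Next I would show $\mathbf{s}$ is an \emph{endpoint} of each of these three segments: if $\mathbf{s}$ were interior to $B_1\cap B_2$ then (because $\bigcap B_i=\{\mathbf{s}\}$) the line through that segment would support all three balls, and one checks $B_2,B_3$ would then lie in opposite closed halfplanes meeting only at $\mathbf{s}$, contradicting that $B_2\cap B_3$ is a segment. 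Hence $\mathrm{bd}(B_1)$, $\mathrm{bd}(B_2)$, $\mathrm{bd}(B_3)$ each contain two segments issuing from $\mathbf{s}$ in the three directions of the sets $B_i\cap B_j$; these three directions are pairwise distinct, since a point just past $\mathbf{s}$ along a shared direction would lie in $\bigcap B_i$.

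Now assume $\mathbf{s}$ is not a corner of $B_1$, so $\mathrm{bd}(B_1)$ has a unique supporting line $\ell$ at $\mathbf{s}$. Both $B_1\cap B_2$ and $B_1\cap B_3$ are segments out of $\mathbf{s}$ lying on supporting lines of $B_1$ through $\mathbf{s}$, hence both lie along $\ell$; so $\mathrm{bd}(B_1)$ is flat along $\ell$ on both sides of $\mathbf{s}$. Translating and rescaling $B_1$ back onto $B$, the point of $\mathrm{bd}(B)$ corresponding to $\mathbf{s}$ lies in the relative interior of an edge $E$ of $B$ parallel to $\ell$; by central symmetry $-E$ is also an edge, and $E,-E$ are the only edges of $B$ parallel to $\ell$ (a convex body has at most one maximal edge in a given direction on each side). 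Since $B_2$ and $B_3$ lie on the side of $\ell$ opposite $B_1$, pulling their flat parts $B_1\cap B_2$ and $B_1\cap B_3$ back onto $B$ places the corresponding points $\mathbf{z}_2,\mathbf{z}_3\in\mathrm{bd}(B)$ precisely at the two endpoints of $-E$. Finally, the segment $B_2\cap B_3$ pulls back — once via $B_2$, once via $B_3$ — to maximal edges $F_2$ at $\mathbf{z}_2$ and $F_3$ at $\mathbf{z}_3$ of $B$, both parallel to the direction of $B_2\cap B_3$, so $F_3=\pm F_2$. The case $F_2=F_3$ is impossible: their common endpoint would have to be an endpoint of both, but the only edge joining $\mathbf{z}_2$ to $\mathbf{z}_3$ is $-E$, which is not parallel to $B_2\cap B_3$. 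Hence $F_3=-F_2$. Chasing endpoints through central symmetry now forces $\mathbf{z}_2,\mathbf{z}_3,-\mathbf{z}_2,-\mathbf{z}_3$ to be four distinct vertices of $B$ joined cyclically by the maximal edges $-E$, $F_3$, $E$, $F_2$; these four segments then constitute all of $\mathrm{bd}(B)$ (a simple closed curve cannot properly contain another), so $B$ is a quadrilateral with opposite sides parallel, i.e.\ a parallelogram — a contradiction. Thus $\mathbf{s}$ is a corner of $B_1$, and by the symmetry of the labelling, of every member of $\{B_i\}$.

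The convexity bookkeeping (disjoint interiors, supporting lines, ``one edge per direction per side'', $\mathbf{s}$ being an endpoint of each $B_i\cap B_j$) is routine; the crux is the middle step — converting ``$\mathrm{bd}(B_1)$ is smooth at $\mathbf{s}$'' into the rigid combinatorial statement that $\mathbf{z}_2$ and $\mathbf{z}_3$ are the two ends of a single edge of $B$ — together with verifying that the endpoint chase genuinely closes $\mathrm{bd}(B)$ up with no room for extra arcs, which is exactly what yields the parallelogram.
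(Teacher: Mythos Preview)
Your argument is correct and follows the same overall strategy as the paper: assume $\mathbf{s}$ fails to be a corner of one ball and deduce that the unit ball is a parallelogram, contradicting the standing hypothesis on $M$. The paper's version is far terser --- after observing that in this situation exactly three maximal boundary segments emanate from $\mathbf{s}$ with two of them collinear, it simply asserts ``by central symmetry such a configuration of line segments can only be produced by parallelogram balls''; your pull-back to $B$, identification of $\mathbf{z}_2,\mathbf{z}_3$ as the endpoints of $-E$, and the $F_2=-F_3$ endpoint chase constitute exactly the verification the paper leaves to the reader.
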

\begin{proof}
Clearly $\mathbf{s}$ is a corner of at least two members of $\{B_i\}$. Suppose, for a contradiction, that $\mathbf{s}$ is not a corner of some
$B_i$. In this case there exist exactly three maximal-length line segments in $\bigcup \{\mathrm{bd}\left(B_i\right)\}$ that have $\mathbf{s}$
as an endpoint. Furthermore, exactly two of these segments have the same gradient. By central symmetry such a configuration of line segments can
only be produced by parallelogram balls.
\end{proof}

By combining the previous lemmas, we know there exist exactly three maximal-length line segments in $\bigcup \{\mathrm{bd}\left(B_i\right)\}$
that have $\mathbf{s}$ as an endpoint, all with distinct gradients. From this fact and central symmetry we conclude the following lemma:

\begin{lemma}\label{hexagon}The balls $\{B_i\}$ are hexagons.
\end{lemma}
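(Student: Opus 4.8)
The plan is to collect the structural facts already established and squeeze them, together with central symmetry, into the conclusion that each $B_i$ must be a hexagon. From the preceding lemmas we know: each of $B_1\cap B_2$, $B_1\cap B_3$, $B_2\cap B_3$ is a genuine Euclidean line segment with $\mathbf{s}$ as an endpoint; the point $\mathbf{s}$ is a corner of every $B_i$; and there are exactly three maximal-length segments of $\bigcup\{\mathrm{bd}(B_i)\}$ emanating from $\mathbf{s}$, with three distinct gradients. I would first fix attention on a single ball, say $B_1$. The two segments $B_1\cap B_2$ and $B_1\cap B_3$ both lie in $\mathrm{bd}(B_1)$ and both have $\mathbf{s}$ as an endpoint; since $\mathbf{s}$ is a corner of $B_1$ these two segments are not collinear, so they form the two ``sides'' of $\mathrm{bd}(B_1)$ meeting at the corner $\mathbf{s}$. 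Thus locally near $\mathbf{s}$ the boundary of $B_1$ consists of two straight segments with distinct gradients.

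Next I would invoke central symmetry of $B_1$ about its center $\mathbf{t}_1$. The point $\mathbf{s}'$ antipodal to $\mathbf{s}$ on $B_1$ is then also a corner, and the boundary near $\mathbf{s}'$ consists of two segments parallel (respectively) to the two segments at $\mathbf{s}$. So $\mathrm{bd}(B_1)$ already contains (at least) four maximal straight pieces: two at $\mathbf{s}$ and their antipodal images at $\mathbf{s}'$, occurring in two parallel pairs. If $\mathrm{bd}(B_1)$ had exactly these four segments it would be a parallelogram, contradicting that $M$ is non-parallelogram-based — more carefully, I should note that a quadrilateral boundary forces $B_1$ itself to be a parallelogram, and every ball of $M$ is a scaled translate of the unit ball, so this is impossible. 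Hence $\mathrm{bd}(B_1)$ has strictly more than four segments, and by central symmetry the number of segments is even, so it has at least six. The same argument applies verbatim to $B_2$ and $B_3$.

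To get exactly six I would use the global count of three distinct gradients among the segments at $\mathbf{s}$ in $\bigcup\{\mathrm{bd}(B_i)\}$. Each $B_i$ is a convex centrally symmetric polygon; its edge directions come in antipodal pairs, so a polygon with $2k$ edges has $k$ distinct edge-directions. For $B_1$ the two edges at $\mathbf{s}$ contribute two of its directions; the third direction needed to reach $k\ge 3$ cannot be a ``new'' one visible at $\mathbf{s}$, because the three maximal segments at $\mathbf{s}$ already account for all gradients present there and two of those belong to $B_1$. The key point is that, since all of $B_1,B_2,B_3$ share the corner $\mathbf{s}$ and have their pair of $\mathbf{s}$-edges drawn from the same pool of three gradients, a dimension/counting argument forces each $B_i$ to have exactly those three edge-directions, i.e.\ to be a hexagon. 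I would therefore conclude: each $B_i$ is a centrally symmetric hexagon with the three prescribed edge-directions, with $\mathbf{s}$ one of its corners.

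The step I expect to be the main obstacle is the last one — ruling out that some $B_i$ is an octagon or larger. The earlier lemmas pin down the local picture at $\mathbf{s}$ very tightly (three gradients, $\mathbf{s}$ a corner of all three balls), but a priori a ball could have extra edges with gradients not appearing at $\mathbf{s}$ at all. The resolution should be that the three balls must mutually tessellate, and tessellation forces their boundaries to agree along the segments $B_i\cap B_j$; chasing the angular ``budget'' of $2\pi$ around $\mathbf{s}$ split among three balls, each contributing the interior angle of a corner, leaves no room for additional edges once central symmetry is imposed. If a cleaner route is wanted, one can instead appeal directly to the classification already implicit in the previous three lemmas: ``exactly three maximal segments at $\mathbf{s}$, all distinct gradients, $\mathbf{s}$ a corner of each $B_i$'' is, under central symmetry, precisely the combinatorial signature of three hexagons tessellating at a common vertex, and spelling that out completes the proof.
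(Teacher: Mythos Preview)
Your outline follows the paper's own (very terse) argument: the paper simply says that the conclusion follows ``from this fact [three maximal segments at $\mathbf{s}$ with distinct gradients] and central symmetry'', so your decomposition into ``at least six sides'' and ``at most six sides'' is exactly the content one has to supply.

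Where you have a genuine gap is the step you yourself flag as the obstacle---ruling out that some $B_i$ has eight or more sides. Your ``dimension/counting argument'' is not an argument, and the angular-budget remark, while in the right direction, is left as a sketch. The clean fix is to exploit more fully the one fact you mention only in passing: \emph{all three balls are homothetic copies of the single unit ball $B$}. This is what makes the step short rather than delicate. Concretely: the three segments $B_1\cap B_2$, $B_1\cap B_3$, $B_2\cap B_3$ have three distinct gradients, say $d_{12},d_{13},d_{23}$, and the edges of $B_i$ at the corner $\mathbf{s}$ are precisely the two segments $B_i\cap B_j$. Hence the corner of $B_1$ at $\mathbf{s}$ has adjacent edge-directions $\{d_{12},d_{13}\}$, that of $B_2$ has $\{d_{12},d_{23}\}$, and that of $B_3$ has $\{d_{13},d_{23}\}$. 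Since each $B_i$ is a scaled translate of $B$, all three of these corner-types occur in $B$ itself. But in a centrally symmetric convex body each edge-direction occurs exactly once (up to the antipodal pair), so each edge has exactly two neighbours; the three corner-types above say that $d_{12}$ is adjacent to $d_{13}$ and to $d_{23}$, and similarly for the other two. This forces the cyclic sequence of edge-directions of $B$ to be $d_{12},d_{13},d_{23},d_{12},d_{13},d_{23}$---a hexagon---with no room for any further edge. Equivalently, the three interior angles at $\mathbf{s}$ sum to $2\pi$ and are three \emph{distinct} corner-angles of $B$; since the distinct corner-angles of a centrally symmetric $2k$-gon sum to $(k-1)\pi$, this gives $k=3$.

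Once you insert either of these two sentences in place of your hedged paragraph, the proof is complete and matches the paper's intended argument.
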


We also need the following two lemmas which follow directly from results by Martini, Swanepoel and Weiss \cite{bib12}.

\begin{lemma}Suppose that $\mathbf{s}$ is a degree-three Steiner point of a $d_M$-SMT on embedded terminal set $\{\mathbf{t}_i:i=1,2,3\}$, where
$\mathbf{s}$ does not coincide with a terminal. Then $\mathbf{s}$ is also a Steiner point of the terminal set $\{\mathbf{t}_i^\prime\}$ where
$\mathbf{t}_i^\prime$ is any point lying on the Euclidean ray with origin $s$ and passing through $\mathbf{t}_i$.
\end{lemma}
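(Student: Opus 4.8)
The plan is to reduce the statement to the local characterisation of degree-three Steiner points in a Minkowski plane, which is exactly the content of the results of Martini, Swanepoel and Weiss invoked in \cite{bib12}, and then to observe that this characterisation depends only on the \emph{directions} of the three edges at $\mathbf{s}$, not on their lengths. In the form I would use it, the characterisation says: a point $\mathbf{s}$ distinct from $\mathbf{t}_1,\mathbf{t}_2,\mathbf{t}_3$ is a degree-three Steiner point joining these three terminals (equivalently, the star network with centre $\mathbf{s}$ interconnecting them cannot be shortened by a displacement, and hence also not by a split) if and only if there exist linear functionals $\phi_1,\phi_2,\phi_3$, with $\phi_i$ norming the direction $\mathbf{t}_i-\mathbf{s}$ (that is, $\|\phi_i\|_*=1$ and $\phi_i$ attains this norm at $\mathbf{t}_i-\mathbf{s}$), such that $\phi_1+\phi_2+\phi_3=\mathbf{0}$. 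This is just the first-order optimality condition for the convex function $\mathbf{x}\mapsto\sum_i d_M(\mathbf{x},\mathbf{t}_i)$ at $\mathbf{x}=\mathbf{s}$, written via subdifferentials of the norm so that it also covers the non-smooth case in which $\mathbf{s}$ is a corner of one of the balls $B_i$ (which, by Lemma~\ref{hexagon}, is exactly the situation occurring here); convexity then upgrades the first-order condition to global optimality, so no displacement helps, and since for three terminals a split of the unique degree-three node followed by suppression of the resulting degree-two bead is itself a displacement of the Steiner point, no split helps either.

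Given this, the argument is short. Write $\mathbf{t}_i'=\mathbf{s}+\lambda_i(\mathbf{t}_i-\mathbf{s})$ with $\lambda_i>0$; this is precisely what it means for $\mathbf{t}_i'$ to lie on the Euclidean ray from $\mathbf{s}$ through $\mathbf{t}_i$ (the degenerate possibility $\mathbf{t}_i'=\mathbf{s}$ is excluded, since then $\mathbf{s}$ would coincide with a terminal, or it may be treated trivially as a zero-length edge imposing no constraint). Then $\mathbf{t}_i'-\mathbf{s}=\lambda_i(\mathbf{t}_i-\mathbf{s})$ is a positive scalar multiple of $\mathbf{t}_i-\mathbf{s}$, so the set of functionals norming $\mathbf{t}_i'-\mathbf{s}$ coincides with the set of functionals norming $\mathbf{t}_i-\mathbf{s}$. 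Hence the same triple $\phi_1,\phi_2,\phi_3$ that witnesses the balancing condition for $\{\mathbf{t}_i\}$ also witnesses it for $\{\mathbf{t}_i'\}$, and the characterisation above gives that $\mathbf{s}$ is a degree-three Steiner point of $\{\mathbf{t}_i'\}$.

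The main obstacle is one of bookkeeping rather than of mathematical depth: I would need to quote the Martini--Swanepoel--Weiss results in precisely the right form — in particular the subdifferential/norming-functional formulation valid at corner points, rather than the naive ``gradients sum to zero'' version, since the balls arising here are hexagons and $\mathbf{s}$ is a corner of each — and to check carefully that ``Steiner point'' in the sense of this paper (no shortening by splitting or displacement) is genuinely equivalent to that balancing condition in the three-terminal case. Once this equivalence is in hand, the scale-invariance of the norming-functional sets makes the transfer from $\{\mathbf{t}_i\}$ to $\{\mathbf{t}_i'\}$ immediate.
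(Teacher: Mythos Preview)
Your proposal is correct and follows the same route as the paper: the paper gives no proof at all beyond the sentence ``follow directly from results by Martini, Swanepoel and Weiss \cite{bib12}'', and your argument is precisely a careful unpacking of how the lemma follows from the norming-functional (subdifferential) characterisation of Fermat--Torricelli points in that reference, together with the elementary observation that the set of norming functionals for a nonzero vector is invariant under positive scaling. One small remark: your parenthetical appeal to Lemma~\ref{hexagon} is misplaced --- that lemma concerns tessellation points, whereas the present statement is for an arbitrary Minkowski plane $M$ and a Steiner point that need not be a tessellation point --- but this does not affect your argument, since the subdifferential formulation you quote already handles both smooth and corner cases uniformly.
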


\begin{lemma}There exists a set $\{\mathbf{t}_i:i=1,2,3\}$ of embedded terminals such that some $d_M$-SMT on $\{\mathbf{t}_i\}$ has a degree-three
Steiner point that does not coincide with a terminal.
\end{lemma}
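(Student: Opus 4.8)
The plan is to produce the required terminal set by hand, using the standard subdifferential characterisation of Fermat--Torricelli points in a Minkowski plane (precisely the kind of result available in \cite{bib12}). Recall first that any tree interconnecting three terminals has $d_M$-length at least $\min_{\mathbf{x}}g(\mathbf{x})$, where $g(\mathbf{x})=\sum_{i=1}^{3}d_M(\mathbf{t}_i,\mathbf{x})$; hence a star centred at a minimiser $\mathbf{s}$ of $g$ is automatically a $d_M$-SMT on $\{\mathbf{t}_1,\mathbf{t}_2,\mathbf{t}_3\}$. Since $g$ is convex, $\mathbf{s}$ minimises $g$ exactly when $\mathbf{0}\in\partial g(\mathbf{s})$; and when $\mathbf{s}$ is distinct from all three terminals, the subdifferential at $\mathbf{s}$ of $\mathbf{x}\mapsto d_M(\mathbf{t}_i,\mathbf{x})$ is $\{-\varphi:\ \|\varphi\|_{\ast}=1,\ \varphi(u_i)=1\}$, where $\|\cdot\|_{\ast}$ is the dual norm and $u_i=(\mathbf{t}_i-\mathbf{s})/d_M(\mathbf{t}_i,\mathbf{s})$ is the unit direction from $\mathbf{s}$ towards $\mathbf{t}_i$. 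So it suffices to choose pairwise-distinct unit vectors $u_1,u_2,u_3$ and norming functionals $\varphi_i$ of the $u_i$ with $\varphi_1+\varphi_2+\varphi_3=\mathbf{0}$: then $\mathbf{s}=\mathbf{0}$ and $\mathbf{t}_i=u_i$ works, $\mathbf{s}$ being a degree-three point (the three edges are distinct because the $u_i$ are distinct directions) not coinciding with any terminal.

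To find such functionals, let $K=\{\varphi:\|\varphi\|_{\ast}=1\}$ be the dual unit circle, a centrally symmetric convex curve. Fix any $\varphi_1\in K$. Because $\|-\varphi_1\|_{\ast}=1<2$, the curve $K$ meets its translate $-\varphi_1+K$ (which equals $-\varphi_1-K$ by central symmetry, so this is just the statement that $-\varphi_1$ lies in the sumset $K+K=2B^{\ast}$), and any intersection point yields $\varphi_2,\varphi_3\in K$ with $\varphi_2+\varphi_3=-\varphi_1$, i.e.\ $\varphi_1+\varphi_2+\varphi_3=\mathbf{0}$. Moreover $\varphi_2\neq\varphi_3$, since $\varphi_2=\varphi_3$ would give $\|\varphi_2\|_{\ast}=\tfrac12$, and similarly $\varphi_1$ differs from $\varphi_2$ and $\varphi_3$. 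Finally pick a unit vector $u_i$ norming $\varphi_i$ for each $i$, which is possible since every linear functional on the plane attains its norm on $\mathrm{bd}(B)$.

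The one point that needs care is that the chosen $u_1,u_2,u_3$ be pairwise distinct as directions; if two coincided, one edge of the star would run through another terminal and $\mathbf{s}$ would fail to have degree three. A repeated direction can occur only when two of the $\varphi_i$ are norming functionals of the same point of $\mathrm{bd}(B)$, i.e.\ when the exposed faces involved degenerate; but we retain the freedom to move $\varphi_1$ along $K$ and to vary $\{\varphi_2,\varphi_3\}$ continuously while preserving $\varphi_1+\varphi_2+\varphi_3=\mathbf{0}$, so such a coincidence is destroyed by an arbitrarily small perturbation. This non-degeneracy bookkeeping is the only real obstacle, and it is cleanest here simply to invoke the classification in \cite{bib12} of the direction-triples that arise at a degree-three Minkowski Steiner point, rather than to carry out the perturbation argument in full.
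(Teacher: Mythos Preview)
Your construction is sound, and it is exactly the machinery behind the result the paper is invoking: the subdifferential criterion for a Fermat--Torricelli point, reduced to exhibiting three dual unit functionals summing to zero together with unit vectors they norm, is how Martini--Swanepoel--Weiss characterise degree-three Steiner configurations in a Minkowski plane. The paper itself gives no argument for this lemma---it simply records that the statement ``follows directly from results by Martini, Swanepoel and Weiss \cite{bib12}''---so your write-up is already strictly more explicit than the paper's treatment. Your preliminary observation that any tree on three terminals has length at least $\min_{\mathbf{x}}g(\mathbf{x})$ (by taking $p$ to be the branching vertex of the tree and applying the triangle inequality along the three edge-disjoint paths) is correct and worth keeping, since it is what upgrades a Fermat--Torricelli point to an actual SMT Steiner point.

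The one soft spot is the one you flag yourself: guaranteeing that the norming vectors $u_1,u_2,u_3$ can be chosen pairwise distinct. Your perturbation claim is correct in spirit---a forced coincidence $u_i=u_j$ requires $\varphi_i$ and $\varphi_j$ to lie in the subdifferential of the norm at a common corner of $B$, hence on a common edge of $B^{\ast}$, and this imposes a codimension-one constraint on the one-parameter family of triples obtained by sliding $\varphi_1$ along $K$---but you do not carry it out and instead fall back on \cite{bib12}. Since that is precisely what the paper does wholesale, you have matched its level of rigour while supplying substantially more of the underlying argument.
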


We can now prove the final proposition of this section. If the unit ball defining $M$ is not a hexagon then, by using the previous two lemmas,
we can construct a \textit{critical} $d_M$-SMT. A critical $d_M$-SMT on an embedded terminal set $\{\mathbf{t}_i:i=1,2,3\}$ has the following
properties:
\begin{enumerate}
    \item The Steiner point does not coincide with a terminal: i.e., the Steiner point is of degree three and there are no edges of zero length,
    \item Each edge $e_i$ has length $a_i+\varepsilon_i$ where $a_i$ is an integer and $0<\varepsilon_i<1$ has any predefined value,
    \item The balls centered at the terminals and meeting the Steiner point do not tessellate.
\end{enumerate}

Otherwise, if $M$'s unit ball is a hexagon we first use the previous two lemmas to find a terminal set satisfying properties (1) and (2). We
then destroy the tessellation property (if necessary) by performing a rotational displacement around the Steiner point of one of the terminals.
Let $T_S$ be a critical $d_M$-SMT on $\{\mathbf{t}_i\}$. The final result of this section is a converse to Proposition \ref{propPara}.

\begin{proposition}$\mathrm{beads}(T_S)= \mathrm{beads}(T_{\mathrm{opt}})+2$.
\end{proposition}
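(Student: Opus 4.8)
The plan is to combine the general estimate of Proposition~\ref{mainUpperProp} with an explicit MSPT candidate realising the deficit~$2$. Since $n=3$, the tree $T_S$ is already full, so $T_S^*=T_S$; by property~(2) of a critical SMT its three edges $\mathbf{t}_i\mathbf{s}$ have lengths $a_i+\varepsilon_i$ with $0<\varepsilon_i<1$, hence none is an integer and $j=0$ in Proposition~\ref{mainUpperProp}, giving $\mathrm{beads}(T_S)-\mathrm{beads}(T_{\mathrm{opt}})\le 2n-4=2$. So it suffices to exhibit a tree $T$ on $\{\mathbf{t}_i\}$ with $\mathrm{beads}(T)=\mathrm{beads}(T_S)-2$; then $\mathrm{beads}(T_{\mathrm{opt}})\le\mathrm{beads}(T)=\mathrm{beads}(T_S)-2$ and the two bounds combine to the claim. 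Here $\mathrm{beads}(T_S)=1-3+\sum_i\lceil a_i+\varepsilon_i\rceil=\sum_i a_i+1$, so I want a tree $T$ built from a single degree-three Steiner bead $\mathbf{s}'$ joined to $\mathbf{t}_1,\mathbf{t}_2,\mathbf{t}_3$ (and then beaded) with $\sum_i\lceil d_M(\mathbf{t}_i,\mathbf{s}')\rceil=\sum_i a_i+1$; concretely, I will choose $\mathbf{s}'$ near $\mathbf{s}$ so that $a_i-1<d_M(\mathbf{t}_i,\mathbf{s}')<a_i$ for two of the indices and $a_i<d_M(\mathbf{t}_i,\mathbf{s}')\le a_i+1$ for the third.

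The geometric input is property~(3). Let $B_i$ be the ball centred at $\mathbf{t}_i$ with radius $d_M(\mathbf{t}_i,\mathbf{s})$; by property~(1) each has positive radius and $\mathbf{s}\in\bigcap_i\mathrm{bd}(B_i)$. I would first note that non-tessellation forces some pair, say $B_1$ and $B_2$, to satisfy $\mathrm{int}(B_1)\cap\mathrm{int}(B_2)\neq\emptyset$: if instead every $B_i\cap B_j$ were a point or a segment, then each such intersection would be a boundary segment of both balls, so $\mathrm{int}(B_i)\cap\mathrm{int}(B_j)=\emptyset$ for all $i\neq j$, and a short convexity argument shows that $\bigcap_i B_i=\{\mathbf{s}\}$ as well (a common nondegenerate segment would force the three separating lines to coincide, whence two of the three balls would lie on the same side of that line and hence overlap near the segment) --- i.e.\ the balls would tessellate. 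Having fixed such a pair, $\mathbf{s}$ is a boundary point of the convex body $B_1\cap B_2$, so there is a direction $v$ from $\mathbf{s}$ into $\mathrm{int}(B_1)\cap\mathrm{int}(B_2)$; displacing $\mathbf{s}$ a short distance along $v$ to a point $\mathbf{s}'$ strictly decreases both $d_M(\mathbf{t}_1,\cdot)$ and $d_M(\mathbf{t}_2,\cdot)$. Since $T_S$ is an SMT, $\mathbf{s}$ minimises $\sum_i d_M(\mathbf{t}_i,\cdot)$, so $d_M(\mathbf{t}_3,\cdot)$ cannot also decrease along $v$, while the triangle inequality bounds its increase: $d_M(\mathbf{t}_3,\mathbf{s}')\le d_M(\mathbf{t}_3,\mathbf{s})+\|\mathbf{s}'-\mathbf{s}\|_M$.

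To obtain the required edge lengths I would exploit the freedom in property~(2), running the construction in the correct order. First fix the directions $\mathbf{t}_i-\mathbf{s}$ (by the lemma that a degree-three Steiner point survives when the terminals are slid along the rays through it, this can be arranged while keeping $\mathbf{s}$ a Steiner point); this already determines the overlapping pair, the direction $v$, and --- by positive homogeneity of $\|\cdot\|_M$ --- the ratios of the distance-decreases to the radii. Next pick a displacement $w$ with $\|w\|_M<1$, so small that $\mathbf{s}+w\in\mathrm{int}(B_1)\cap\mathrm{int}(B_2)$ once the terminals have been slid far out, yielding fixed constants $\delta_1,\delta_2>0$ with $d_M(\mathbf{t}_i,\mathbf{s}+w)\le d_M(\mathbf{t}_i,\mathbf{s})-\delta_i$ for $i=1,2$. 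Finally slide the terminals out so that $d_M(\mathbf{t}_i,\mathbf{s})=a_i+\varepsilon_i$ for large integers $a_i$, with $\varepsilon_1<\delta_1$, $\varepsilon_2<\delta_2$ and $\varepsilon_3<1-\|w\|_M$. Putting $\mathbf{s}'=\mathbf{s}+w$ we then get $a_i-1<d_M(\mathbf{t}_i,\mathbf{s}')<a_i$ for $i=1,2$ and $a_3<d_M(\mathbf{t}_3,\mathbf{s}')\le a_3+\varepsilon_3+\|w\|_M<a_3+1$, so the tree $T$ with Steiner bead $\mathbf{s}'$ satisfies $\mathrm{beads}(T)=1-3+a_1+a_2+(a_3+1)=\sum_i a_i-1=\mathrm{beads}(T_S)-2$, as required. (When $M$'s unit ball is a hexagon, $T_S$ arises from a rotational displacement of one terminal that destroys tessellation; the same argument applies to that configuration, which still satisfies properties~(1)--(3).)

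The main obstacle is the middle step: extracting from non-tessellation a single displacement direction along which \emph{two} of the three terminal distances strictly decrease, and simultaneously controlling the growth of the third distance via SMT-optimality of $\mathbf{s}$ and the triangle inequality. A secondary point requiring care is the order of the construction --- fixing the ray directions (hence the displacement $w$ and the constants $\delta_i$) before the $\varepsilon_i$ are chosen --- so that choosing the $\varepsilon_i$ does not feed back into the balls $B_i$ used to locate $\mathbf{s}'$; homogeneity of the norm is precisely what makes this consistent.
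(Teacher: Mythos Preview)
Your approach is essentially the same as the paper's: both combine the upper bound $2n-4=2$ from Proposition~\ref{mainUpperProp} (with $j=0$, by property~(2)) with a construction of a tree having two fewer beads, obtained by displacing $\mathbf{s}$ into $\mathrm{int}(B_1)\cap\mathrm{int}(B_2)$ and preselecting the $\varepsilon_i$ so that the ceilings drop by one on two edges and increase by at most one on the third. The paper's proof is three lines and simply asserts that properties~(1) and~(3) allow such a displacement and that property~(2) allows the $\varepsilon_i$ to be preselected accordingly; you supply the details the paper omits --- the separating-line argument for why non-tessellation forces two interiors to overlap, the use of the triangle inequality to bound the third distance, and the observation that one must fix the ray directions before choosing the $\varepsilon_i$ --- but the skeleton is identical.
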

\begin{proof}
By Properties (1) and (3) we may displace the Steiner point $\mathbf{s}$ into a region corresponding to the intersection of the interiors of two
balls, say $\mathrm{int}(B_1) \cap \mathrm{int}(B_2)$. By Property (2) we can preselect each $\varepsilon_i$ so that after displacement we have
$\vert \mathbf{t}_1\mathbf{s} \vert< a_1$, $\vert \mathbf{t}_2\mathbf{s} \vert< a_2$ and $\vert \mathbf{t}_3\mathbf{s} \vert \leq a_3+1$.
\end{proof}

We conclude this section with two conjectures. Let $N$ be a set of $n$ terminals in a Minkowski plane $M$ with unit ball $B$, and let $T_S$ and
$T_{\mathrm{opt}}$ be a $d_M$-SMT and $d_M$-MSPT on $N$ respectively. Suppose first that $B$ is a parallelogram. Let $\mathbf{s}^\prime$ be the
Steiner point of some cherry of $T_S$, and let $\mathbf{u}_1,\mathbf{u}_2$ be terminals adjacent to $\mathbf{s}^\prime$. The proof of
Proposition \ref{propPara} implies that a displacement of $\mathbf{s}^\prime$ can shorten at most one of
$\mathbf{s}^\prime\mathbf{u}_1,\mathbf{s}^\prime\mathbf{u}_2$. Since every Steiner tree has a minimum of two cherries, displacements of Steiner
points can shorten at most $2n-5$ edges of $T_S$ (note, of course, that it may be possible to shorten up to $2n-4$ edges of $T_S$ if we change
its topology).

\begin{conjecture}The upper bound $\mathrm{beads}(T_S)-\mathrm{beads}(T_{\mathrm{opt}}) \leq 2n-4$ is tight if and only if $B$ is not a parallelogram.
\end{conjecture}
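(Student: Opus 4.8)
The statement is an equivalence, and I would prove its two implications by quite different routes.

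\textbf{If $B$ is not a parallelogram, then the bound is attained.} The case $n=3$ is exactly the construction of a critical $d_M$-SMT in Section~\ref{sec3}, which gives $\mathrm{beads}(T_S)=\mathrm{beads}(T_{\mathrm{opt}})+2$, so the task is to propagate it to all $n$ by imitating the Euclidean argument of Section~\ref{sec2}. First I would establish a Minkowski analogue of the sprouting proposition of Section~\ref{sec2}: for every full Steiner topology $G_n$ there is an embedded terminal set whose $d_M$-SMT realises $G_n$, with each edge length prescribable of the form ``integer plus a small predefined fractional part'' (one root edge just below an integer, all other $2n-4$ edges just above integers), and with \emph{every} Steiner bead non-degenerate and admitting a small perturbation that strictly shortens two of its three incident edges. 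The Euclidean $120^{\circ}$ rigidity is replaced here by the description of Minkowski Steiner points of Martini, Swanepoel and Weiss~\cite{bib12}; their result that a Steiner point persists when its neighbours are slid outward along the rays through it supplies the freedom over edge lengths, and the induction runs as in Section~\ref{sec2} --- contract a cherry of $G_n$ to a terminal, realise $G_{n-1}$, then re-sprout two new terminals close enough to the corresponding Steiner point that convexity preserves the topology. Given such an SMT $T_S$, I would then displace its Steiner beads in a rooted order exactly as in Section~\ref{sec2}: at each bead the two-edge-shortening perturbation drops the ceilings of two of its edges while nudging the remaining (parent) edge within its integer interval, so that after all displacements all $2n-4$ non-root edges have lost a ceiling and $\mathrm{beads}(T_S)=\mathrm{beads}(T)+2n-4$; together with Proposition~\ref{mainUpperProp} this forces $T$ to be an MSPT, and the bound is tight.

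The main obstacle in this direction is the sprouting step. The Euclidean proof obtains uniqueness of the SMT, but uniqueness fails when $B$ is a polygon, so one must instead carry a quantitative local-optimality gap through the induction (an analogue of the constants $f_i$). One must also keep consistent, throughout the induction, the two requirements ``all edge lengths near integers'' and ``a two-edge-shortening perturbation exists at every Steiner bead''. The latter is precisely what fails in a parallelogram plane, where the balls at every Steiner point are forced to tessellate (compare Proposition~\ref{propPara} and Corollary~\ref{corTessellate}); in a non-parallelogram plane it is a generic condition on the edge directions, which the sprouting freedom should let one maintain, but verifying this carefully is the crux.

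\textbf{If $B$ is a parallelogram, then the bound is not attained}, i.e.\ $\mathrm{beads}(T_S)-\mathrm{beads}(T_{\mathrm{opt}})\le 2n-5$. For $n=3$ this is already Proposition~\ref{propPara}, so assume $n\ge4$; and if $T_S^{*}$ has an integer-length edge then Proposition~\ref{mainUpperProp} gives the bound directly, so assume it has none. The plan is to combine a refinement of Proposition~\ref{mainUpperProp} with the cherry structure. From the proof of Proposition~\ref{propPara}, the two terminal edges of any cherry of $T_S$ emanate from that cherry's Steiner point, at which the three relevant balls tessellate (Corollary~\ref{corTessellate} applied locally); hence, \emph{when $T_{\mathrm{opt}}$ has the same topology as $T_S$}, in each cherry at most one of the two terminal edges can be strictly shorter in $T_{\mathrm{opt}}$. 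Since every full Steiner tree on $n\ge4$ terminals has at least two edge-disjoint cherries, at least two edges of $T_S^{*}$ are not strictly shortened, and one then wants to re-run the inequality chain of Proposition~\ref{mainUpperProp} so that it loses a unit of slack at each cherry rather than just one unit globally, yielding $2n-5$.

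Two things make this a conjecture rather than a theorem. First, even in the same-topology case the accounting must be done carefully: the proof of Proposition~\ref{mainUpperProp} extracts only a single strict inequality, and one needs a second one localised to a cherry. Second and more seriously, the cherry argument above assumes $T_{\mathrm{opt}}$ shares a topology with $T_S$, and in general it need not. I see three possible ways around this, in rough order of promise: (a) pass to the square via the linear map normalising the parallelogram and use the grid (Hanan) normal form for rectilinear Steiner trees to argue that $T_{\mathrm{opt}}$ may be taken with $T_S$'s topology, or at least with one whose cherries are still controlled; (b) push the equality analysis of Proposition~\ref{mainUpperProp} until the total-length constraint alone (with $j=0$) excludes the value $2n-4$, the intuition being that shortening $2n-4$ of the $2n-3$ edges forces the single remaining edge to absorb the whole length decrease and so to cross about as many integer thresholds; (c) prove a topology-independent rigidity at two of the cherries of $T_S$ specific to parallelogram planes. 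Making any of these --- especially ruling out a net bead gain from topology-changing moves --- rigorous is the obstacle.
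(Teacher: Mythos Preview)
This statement appears in the paper as a \emph{Conjecture}, not as a proved result; the paper offers no proof, only the heuristic sentence immediately preceding it (that in a parallelogram plane displacements of the Steiner points of $T_S$ can shorten at most $2n-5$ of its edges because each of the two cherries has a protected edge). So there is nothing to compare your attempt against: you are trying to settle an open problem, and --- appropriately --- your write-up is an attack plan with honestly flagged gaps rather than a proof.

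A few remarks on the plan itself. For the ``not a parallelogram $\Rightarrow$ tight'' direction, your diagnosis is right: the sprouting/induction step is the crux, and the Euclidean uniqueness argument (the constants $f_i$) genuinely breaks in polygonal norms where SMTs need not be unique; carrying a quantitative gap through the induction while simultaneously maintaining the non-tessellation condition at every Steiner bead is exactly the missing idea.

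For the ``parallelogram $\Rightarrow$ not tight'' direction, your cherry observation is in fact a bit stronger than you give it credit for. In a parallelogram plane the cherry bead $\mathbf{s}$ of $T_S$ lies on a shortest $d_L$-path between its two terminals $\mathbf{t}_1,\mathbf{t}_2$ (this is the tessellation equality $|\mathbf{st}_1|+|\mathbf{st}_2|=|\mathbf{t}_1\mathbf{t}_2|$ from the proof of Proposition~\ref{propPara}), so for \emph{any} point $\mathbf{s}''$ one has $|\mathbf{s}''\mathbf{t}_1|+|\mathbf{s}''\mathbf{t}_2|\ge|\mathbf{st}_1|+|\mathbf{st}_2|$; hence both terminal edges of a cherry cannot strictly shorten, regardless of what the third neighbour does. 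That removes the same-topology hypothesis from the cherry inequality itself. What it does \emph{not} remove is the need to pair off edges of $T_S^{*}$ with edges of $T_{\mathrm{opt}}^{*}$ when the topologies differ, and then to extract a \emph{second} unit of slack from the chain in Proposition~\ref{mainUpperProp}; both of these are real obstacles, and your options (a)--(c) are reasonable lines, with (b) closest in spirit to how the paper's inequality already works.
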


\begin{conjecture}The upper bound $\mathrm{beads}(T_S)-\mathrm{beads}(T_{\mathrm{opt}}) \leq 2n-5$ is tight if and only if $B$ is a parallelogram.
\end{conjecture}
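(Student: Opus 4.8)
The plan is to prove the biconditional in two directions, exploiting that it is tied to the companion conjecture through the universal value $2n-4$ of Proposition \ref{mainUpperProp}. For the \emph{only if} direction I would argue the contrapositive: if $B$ is not a parallelogram I want terminal sets with $\mathrm{beads}(T_S)=\mathrm{beads}(T_{\mathrm{opt}})+2n-4$, so that $2n-5$ fails even to be an upper bound and hence cannot be tight. For $n=3$ this is already the critical-SMT proposition of Section \ref{sec3}, which gives difference $2=2n-4$ in every non-parallelogram plane. For larger $n$ I would sprout extra cherries onto one critical three-terminal SMT, fixing edge lengths of the form (integer)$\,+\varepsilon_i$ and displacing so that each sprout raises the difference by exactly $2$, in direct analogy with the Euclidean construction of Section \ref{sec2}; thus the \emph{only if} direction reduces to the attainability half of the companion conjecture.

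For the \emph{if} direction, assume $B$ is a parallelogram. The decisive simplification is that every such plane is linearly isometric to the rectilinear plane: a linear map $A$ taking $B$ to the $\ell_\infty$ square satisfies $\|\mathbf{x}\|_B=\|A\mathbf{x}\|_\infty$, and $\ell_\infty$ is isometric to $\ell_1$ up to rotation and scaling; since the conjecture quantifies over all terminal sets, it suffices to treat $d_L$ as the rectilinear metric. The upper bound then has an easy half: when $T_{\mathrm{opt}}$ shares the topology of $T_S$, the remark preceding the conjectures applies. By Corollary \ref{corTessellate} each cherry Steiner point is a tessellation point, so displacing it shortens at most one of its two terminal edges; with at least two cherries this protects at least two of the $2n-3$ edges of $T_S^*$, leaving at most $2n-5$ edges whose ceiling can drop. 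Attainability is then obtained by mirroring the displacement construction of Section \ref{sec2} in the rectilinear plane: build a full SMT whose topology has exactly two cherries, with edge lengths (integer)$\,+\varepsilon_i$, and displace the Steiner points in breadth-first order so that every edge except the two protected cherry edges is pushed from just above an integer to just below it, each such step lowering $\mathrm{beads}$ by one and yielding $\mathrm{beads}(T_S)=\mathrm{beads}(T_{\mathrm{opt}})+2n-5$ (the base case $n=3$ being the tight example underlying Proposition \ref{propPara}).

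The genuine obstacle is the upper bound when $T_{\mathrm{opt}}$ has a topology \emph{different} from that of $T_S$ -- exactly the case flagged in the remark, where a change of topology could in principle shorten up to $2n-4$ edges. Writing $\mathrm{beads}(T_S)-\mathrm{beads}(T_{\mathrm{opt}})=(L(T_S)-L(T_{\mathrm{opt}}))+\sum_i\delta(a_i)-\sum_i\delta(e_i)$ with $\delta(x)=\lceil x\rceil-x\in[0,1)$, the length term is nonpositive because $T_S$ is shortest, the deficit term is below $2n-3$, and integrality already recovers $2n-4$. To save the extra unit I would show that the two cherries of $T_S$ each contribute at most $1$ rather than $2$, using the tessellation identity $|a_1|+|a_3|=|\mathbf{t}_1\mathbf{t}_3|$ together with the triangle inequality inside $T_{\mathrm{opt}}^*$ -- precisely the estimate $p_1\leq\min\{p_2,p_3\}$ from the proof of Proposition \ref{propPara}, which there caps a three-terminal difference at $1$. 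The hard part will be making this \emph{local} cherry bound survive a \emph{global} reconnection of the tree: one must guarantee that the two units saved at the cherries of $T_S$ cannot be clawed back elsewhere in a differently connected $T_{\mathrm{opt}}$. Controlling this interaction between the fixed cherry structure of the SMT and an arbitrary MSPT topology is, I expect, the reason the statement remains a conjecture.
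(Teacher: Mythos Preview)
The paper does not prove this statement: it is explicitly labelled a \emph{conjecture} and left open, with only the heuristic remark immediately preceding it (about cherries protecting two edges under displacement) as supporting evidence. There is therefore no proof in the paper to compare your proposal against.

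Your proposal is a coherent outline of how one would \emph{try} to prove the conjecture, and you correctly diagnose the obstruction. A few remarks on the pieces:

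\begin{itemize}
\item Your \emph{only if} direction does not stand on its own: you reduce it to ``the attainability half of the companion conjecture'' (difference $2n-4$ in every non-parallelogram plane for all $n$), but that conjecture is itself open for $n>3$. The sprouting argument of Section~\ref{sec2} is carried out only in the Euclidean plane, and extending it to arbitrary non-parallelogram Minkowski planes is nontrivial --- the displacement geometry there depends on the $120^\circ$ structure of Euclidean Steiner points, which does not hold in general.
\item For the \emph{if} direction, your treatment of the same-topology case is exactly the paper's heuristic remark, and your attainability sketch is plausible (though the ``base case $n=3$'' you cite is Proposition~\ref{propPara}, which gives only the upper bound $1$; you would still need to exhibit a three-terminal rectilinear example with difference exactly $1$, which is easy but not in the paper).
\item You are right that the genuine gap is the upper bound $2n-5$ when $T_{\mathrm{opt}}$ has a different topology from $T_S$. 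The local cherry estimate $p_1\le\min\{p_2,p_3\}$ from Proposition~\ref{propPara} uses that the Steiner point of $T_S^*$ is a tessellation point of its three \emph{neighbours}; once $T_{\mathrm{opt}}$ re-routes the tree, the two ``protected'' cherry terminals of $T_S$ need not be adjacent to a common node in $T_{\mathrm{opt}}^*$, and there is no obvious way to match the edge-length perturbations so that the saving of two units survives. This is precisely why the paper leaves the statement as a conjecture.
\end{itemize}

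In short: your plan is reasonable and your identification of the hard step is accurate, but neither you nor the paper has a proof.
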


\section{A Canonical Form for Euclidean MSPTs} \label{sec4}
Throughout this section we only consider Euclidean MSPTs. In general there are many possible ways to embed an MSPT in Euclidean space. Here we
introduce a canonical form for MSPTs (over all possible embeddings) which allows us to reformulate the MSPT problem as that of finding a
shortest total length tree in which almost all edges have integer length. Understanding this canonical form provides a valuable first step
towards finding an efficient exact algorithm for the problem (like the canonical forms for Steiner trees in fixed-orientation metrics
\cite{bib16}, and those used in the previously mentioned GeoSteiner algorithms for rectilinear Steiner trees). In other words, we show that in
order to find MSPTs, it suffices to explore a class of trees with strong structural restrictions. The canonical form we describe is also
interesting from a more theoretical point of view as it gives an insight into the geometry of MSPTs.

An MSPT where every terminal is of degree one and every Steiner bead is of degree three is called a \textit{full MSPT}. This term refers to
MSPTs that have this property ``naturally", i.e., not through a splitting process. Recall that a full MSPT contains $n-2$ Steiner beads and
$2n-3$ edges. The Steiner bead of a cherry will be referred to as a \textit{cherry bead}. A \textit{level-region} of a function $f$ is a set of
points satisfying $f=k$ for some constant $k$.

\begin{lemma}Let $N$ be a set of three embedded terminals admitting a full MSPT. Then there exists an MSPT on $N$ such that at least two of its
edges are of integer length.
\end{lemma}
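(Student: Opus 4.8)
The plan is to start with an arbitrary full MSPT $T$ on the three terminals $N = \{\mathbf{t}_1,\mathbf{t}_2,\mathbf{t}_3\}$, with cherry bead $\mathbf{s}$ and edges $e_i = \mathbf{t}_i\mathbf{s}$ of lengths $\ell_i = |e_i|$ (note a full three-terminal MSPT is just a single Steiner bead joined to all three terminals), and then continuously displace $\mathbf{s}$ so that two of the three edge lengths are driven to integer values while $\mathrm{beads}(T)$ does not increase. Since $T$ is assumed to be an MSPT, if the displacement keeps the bead count from increasing then the displaced tree is also an MSPT, and we are done. The bead count here is $\mathrm{beads}(T) = 1 - 3 + \sum_{i=1}^3 \lceil \ell_i \rceil = \lceil \ell_1\rceil + \lceil \ell_2\rceil + \lceil \ell_3\rceil - 2$, so the whole argument reduces to controlling the three quantities $\lceil \ell_i \rceil$ under motion of $\mathbf{s}$.

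First I would handle the degenerate possibility that $\mathbf{s}$ coincides with a terminal or that some $\ell_i$ is already an integer, peeling off easy sub-cases; in particular if two of the $\ell_i$ are already integers there is nothing to prove, and if exactly one is an integer we only need to reach one more. For the generic case, pick the edge of largest length, say $e_1$, and displace $\mathbf{s}$ along the ray from $\mathbf{s}$ away from $\mathbf{t}_1$ (increasing $\ell_1$) or toward $\mathbf{t}_1$ (decreasing $\ell_1$) — whichever direction does not force $\lceil \ell_1\rceil$ up, i.e. move toward $\mathbf{t}_1$ until $\ell_1$ hits the integer $\lfloor \ell_1\rfloor$. Along this motion $\ell_2$ and $\ell_3$ each change by at most the displacement distance, which we can take to be less than the fractional part of $\ell_1$, so $\lceil \ell_2\rceil$ and $\lceil \ell_3\rceil$ can each increase by at most $1$ — but if we choose the displacement direction carefully (moving $\mathbf{s}$ ``inward'' toward the Fermat-type position, or choosing among the two orientations along the line $\mathbf{t}_1\mathbf{s}$) we can ensure at least one of $\ell_2,\ell_3$ also decreases, keeping the net change in $\sum\lceil\ell_i\rceil$ nonpositive. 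After this first step $\ell_1$ is an integer; now fix the line through $\mathbf{t}_1$ and the new position of $\mathbf{s}$, and slide $\mathbf{s}$ along the circle $|\mathbf{t}_1\mathbf{x}| = \ell_1$ (this keeps $\ell_1$ integral and fixed) until one of $\ell_2,\ell_3$ reaches an integer; along this arc the two remaining lengths vary monotonically in opposite senses near $\mathbf{s}$ in at least one direction of travel, so we can again arrange that the count does not go up before the second integer length is attained.

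The main obstacle will be the second step: guaranteeing that while we slide $\mathbf{s}$ on the circle of radius $\ell_1$ to make (say) $\ell_2$ integral, the quantity $\lceil \ell_3\rceil$ does not jump up. This requires a careful choice of which direction to traverse the arc — one must check that moving $\mathbf{s}$ so as to decrease $\ell_2$ toward $\lfloor\ell_2\rfloor$ can be done while $\ell_3$ is non-increasing, which is a short planar-geometry argument about the gradients of the two distance functions along the constraint circle (they point in ``opposite enough'' directions when $\mathbf{s}$ is at a non-degenerate Fermat point, by the $120^\circ$ property). A clean way to sidestep even this is to invoke the canonical-form perspective sketched in the paper: of the three fractional parts $\{\ell_i\} = \ell_i - \lfloor\ell_i\rfloor$, at least two, say $\{\ell_2\}$ and $\{\ell_3\}$, satisfy $\{\ell_2\} + \{\ell_3\} \le 1$ or both lie on the same side, and then a single displacement of $\mathbf{s}$ perpendicular to $e_1$ (or along the bisector direction) can be shown to drive both $\ell_2$ and $\ell_3$ monotonically down to their floors without ever increasing $\lceil\ell_1\rceil$, yielding two integer edges in one motion; verifying the monotonicity claim is the one routine computation I would actually carry out in detail.
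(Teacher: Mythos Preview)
Your plan has a genuine gap, and it stems from conflating MSPTs with SMTs. The Steiner bead $\mathbf{s}$ of a full \emph{MSPT} on three terminals is not, in general, a Fermat point: there is no $120^{\circ}$ angle condition at $\mathbf{s}$, and indeed $\mathbf{s}$ can sit essentially anywhere inside the triangle (or even outside it) so long as $\sum\lceil\ell_i\rceil$ is minimal. Consequently the directional monotonicity claims you rely on can fail. In Step~1, moving $\mathbf{s}$ toward $\mathbf{t}_1$ may well increase \emph{both} $\ell_2$ and $\ell_3$ (this happens whenever $\angle\mathbf{t}_1\mathbf{s}\mathbf{t}_2>90^{\circ}$ and $\angle\mathbf{t}_1\mathbf{s}\mathbf{t}_3>90^{\circ}$), and both ceilings can jump; reversing direction then increases $\ell_1$. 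In Step~2, sliding along the circle $|\mathbf{t}_1\mathbf{x}|=\ell_1$ gives no guarantee that $\ell_2$ and $\ell_3$ vary in opposite senses without the $120^{\circ}$ condition. Your proposed ``sidestep'' also does not work as stated: the claim that some pair of fractional parts satisfies $\{\ell_i\}+\{\ell_j\}\le 1$ is simply false (take all three equal to $0.6$), and moving perpendicular to $e_1$ strictly increases $\ell_1$, so $\lceil\ell_1\rceil$ can jump immediately.

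The paper's argument avoids all of this by never choosing a direction. It observes that $f(\mathbf{s})=\sum_{i=1}^{3}\lceil|\mathbf{t}_i\mathbf{s}|\rceil-2$ is piecewise constant, with level regions bounded by arcs of integer-radius circles centred at the three terminals. The connected component $L(\mathbf{s})$ of the minimal level set containing $\mathbf{s}$ is therefore a planar region whose boundary is made up of such arcs; because $\mathbf{s}$ is not a terminal (and because a region bounded by arcs from a single centre would allow $f$ to drop further), $L(\mathbf{s})$ must be bounded by arcs coming from at least two distinct terminals. Those arcs meet at a vertex of $L(\mathbf{s})$; relocating $\mathbf{s}$ to that vertex keeps the bead count unchanged while making two edge lengths integers simultaneously. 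This level-set viewpoint is what replaces the delicate (and in your version unjustified) control of three distances along a specific path.
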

\begin{proof}
Let $T_{\mathrm{opt}}$ be a full MSPT on the embedded terminal set $N=\{\mathbf{t}_i:i=1,2,3\}$ and let $\mathbf{s}$ be the Steiner bead. Then
$\mathrm{beads}(T_{\mathrm{opt}})$ is the minimum value of the function $f=\lceil \vert \mathbf{t}_1\mathbf{s} \vert \rceil + \lceil \vert
\mathbf{t}_2\mathbf{s} \vert \rceil + \lceil \vert \mathbf{t}_3\mathbf{s} \vert \rceil - 2$, where the position of $\mathbf{s}$ is variable. The
level-region $k=\lceil \vert \mathbf{t}_1\mathbf{s} \vert \rceil + \lceil \vert \mathbf{t}_2\mathbf{s} \vert \rceil + \lceil \vert
\mathbf{t}_3\mathbf{s} \vert \rceil - 2$, where $k$ is a positive integer, consists of regions that are bounded by at least one and at most six
integer-radius circular arcs. Since $\mathbf{s}$ does not correspond to a terminal, the region $L(\mathbf{s})$ containing $\mathbf{s}$ must be
bounded by at least two arcs. Displacing $\mathbf{s}$ to coincide with an intersection point of the arcs bounding $L(\mathbf{s})$ will lead to
an MSPT of the desired form; see Figure \ref{figureLevs}.
\end{proof}

\begin{figure}[htb]
\begin{center}

\includegraphics[scale=0.3]{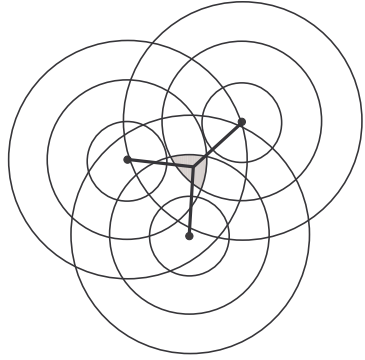}

\end{center}
\caption{Level-regions of $f$. \label{figureLevs}}
\end{figure}

\begin{definition}A tree $T$ connecting $n$ embedded terminals and some Steiner points is called $\mathbb{Z}$-packed if $T$ has a full Steiner
topology and at least $2n-4$ of its edges are of integer length.
\end{definition}

\begin{proposition}Let $N$ be a set of three embedded terminals in the Euclidean plane admitting a full MSPT. Then a shortest total length
$\mathbb{Z}$-packed tree on $N$ is an MSPT.
\end{proposition}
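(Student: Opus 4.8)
The plan is to show that a shortest total length $\mathbb{Z}$-packed tree $T$ on $N=\{\mathbf{t}_1,\mathbf{t}_2,\mathbf{t}_3\}$ has $\mathrm{beads}(T)$ equal to the bead count of an MSPT, so that $T$ itself (after beading) is an MSPT. Since $n=3$, a full Steiner topology has $2n-3=3$ edges, and being $\mathbb{Z}$-packed means at least $2n-4=2$ of these three edges have integer length. By the preceding lemma, there exists an MSPT $T_{\mathrm{opt}}$ on $N$ which is also $\mathbb{Z}$-packed (at least two of its edges have integer length); call its Steiner bead $\mathbf{s}_{\mathrm{opt}}$ with edge lengths $e_1,e_2,e_3$. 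Let $T$ have Steiner point $\mathbf{s}$ and edge lengths $a_1,a_2,a_3$, with (say) $a_1,a_2\in\mathbb{Z}$.

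First I would record the bead-count formula in this setting: $\mathrm{beads}(T)=1-3+\sum_{i=1}^3\lceil a_i\rceil = -2 + a_1 + a_2 + \lceil a_3\rceil$, and similarly $\mathrm{beads}(T_{\mathrm{opt}}) = -2 + e_{\sigma(1)} + e_{\sigma(2)} + \lceil e_{\sigma(3)}\rceil$ for the appropriate labelling $\sigma$ of $T_{\mathrm{opt}}$'s edges. Since $T$ has minimum total length among $\mathbb{Z}$-packed trees and $T_{\mathrm{opt}}$ is $\mathbb{Z}$-packed, we have $a_1+a_2+a_3 \leq e_1+e_2+e_3 = L(T_{\mathrm{opt}})$. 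The goal is to deduce $\mathrm{beads}(T) \leq \mathrm{beads}(T_{\mathrm{opt}})$, since the reverse inequality $\mathrm{beads}(T)\geq\mathrm{beads}(T_{\mathrm{opt}})$ is automatic ($T_{\mathrm{opt}}$ is bead-optimal).

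The key step is to bound $\mathrm{beads}(T)$ below $L(T)+1 = a_1+a_2+a_3+1$ and $\mathrm{beads}(T_{\mathrm{opt}})$ above the integer just below $L(T_{\mathrm{opt}})$, exploiting that both trees have two integer edges. Concretely, $\mathrm{beads}(T) = -2 + a_1+a_2+\lceil a_3\rceil < a_1+a_2+a_3 + 1 - 2 + 1 = L(T) - (1 - (\lceil a_3\rceil - a_3))$; more usefully, since $a_1,a_2$ are integers, $\mathrm{beads}(T) = -2 + L(T) + (\lceil a_3\rceil - a_3)$, and likewise $\mathrm{beads}(T_{\mathrm{opt}}) = -2 + L(T_{\mathrm{opt}}) + (\lceil e_{\sigma(3)}\rceil - e_{\sigma(3)})$. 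So I need $L(T) + (\lceil a_3\rceil - a_3) \leq L(T_{\mathrm{opt}}) + (\lceil e_{\sigma(3)}\rceil - e_{\sigma(3)})$. Writing $L(T) = m + \phi$ and $L(T_{\mathrm{opt}}) = m' + \phi'$ with $m,m'$ integers and $\phi,\phi'\in[0,1)$: since two of $T$'s edges are integers, the fractional part of $L(T)$ equals the fractional part of $a_3$, i.e. $\phi = \lceil a_3\rceil - a_3$ is $0$ if $a_3\in\mathbb{Z}$ and $1-(\lceil a_3\rceil - a_3)$ otherwise — I need to be careful here: if $a_3 = k + \delta$ with $\delta\in(0,1)$ then $\lceil a_3\rceil - a_3 = 1-\delta$ while the fractional part of $L(T)$ is $\delta$. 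So $\mathrm{beads}(T) = L(T) - 2 + (1 - \{L(T)\})$ when $\{L(T)\}\neq 0$, and $= L(T) - 2$ when $\{L(T)\}=0$; in all cases $\mathrm{beads}(T) = \lceil L(T)\rceil - 2$. The same holds for $T_{\mathrm{opt}}$: $\mathrm{beads}(T_{\mathrm{opt}}) = \lceil L(T_{\mathrm{opt}})\rceil - 2$. Now $L(T)\leq L(T_{\mathrm{opt}})$ immediately gives $\lceil L(T)\rceil \leq \lceil L(T_{\mathrm{opt}})\rceil$, hence $\mathrm{beads}(T)\leq\mathrm{beads}(T_{\mathrm{opt}})$, and combined with the automatic reverse inequality, $\mathrm{beads}(T) = \mathrm{beads}(T_{\mathrm{opt}})$. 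Therefore $T$, with its edges beaded, is an MSPT.

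The main obstacle — and the point that makes the $\mathbb{Z}$-packed hypothesis essential — is establishing that a bead-optimal tree $T_{\mathrm{opt}}$ can be taken with two integer-length edges, which is exactly the content of the lemma immediately preceding the definition of $\mathbb{Z}$-packed; without it, $\mathrm{beads}(T_{\mathrm{opt}})$ need not equal $\lceil L(T_{\mathrm{opt}})\rceil - 2$ and the comparison of ceilings breaks down. A secondary point requiring care is confirming that the $\mathbb{Z}$-packed tree of minimum total length actually exists (the constraint set is closed: the locus where two prescribed edges have integer length is a finite union of arcs, and on the relevant bounded portion total length attains its minimum), and that a full Steiner topology on three terminals indeed has the Steiner point of degree three so the bead-count formula with $m=3$ applies verbatim. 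Once these are in place, the argument is the short ceiling computation above.
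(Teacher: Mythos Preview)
Your argument is correct and somewhat more direct than the paper's. The paper establishes the result by observing that Proposition~\ref{mainUpperProp} (and hence Corollary~\ref{EqCorollary}) goes through verbatim with ``$T_S$'' replaced by a shortest $\mathbb{Z}$-packed tree and ``$T_{\mathrm{opt}}$'' by a $\mathbb{Z}$-packed MSPT, since the only property of $T_S$ used there is $L(T_S)\le L(T_{\mathrm{opt}})$; because a $\mathbb{Z}$-packed tree on three terminals has at most one non-integer edge, Corollary~\ref{EqCorollary} then gives equality of bead counts. You instead isolate the elementary identity that for any $\mathbb{Z}$-packed tree $T$ on three terminals one has $\mathrm{beads}(T)=\lceil L(T)\rceil-2$ (since the two integer edges pass through the ceiling), and then the inequality $L(T)\le L(T_{\mathrm{opt}})$ immediately yields $\mathrm{beads}(T)\le\mathrm{beads}(T_{\mathrm{opt}})$. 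Both approaches rest on the preceding lemma to obtain a $\mathbb{Z}$-packed MSPT; your version has the virtue of making transparent exactly why the $\mathbb{Z}$-packed hypothesis is needed (it collapses the sum of ceilings to a single ceiling of the total length), while the paper's version has the advantage of reusing machinery that also drives the analogous $n$-terminal statement later in the section. The exposition could be tightened by deleting the false start on fractional parts and going straight to $\mathrm{beads}(T)=\lceil L(T)\rceil-2$.
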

\begin{proof}
By the previous lemma, $N$ must admit a $\mathbb{Z}$-packed MSPT. Note that Proposition \ref{mainUpperProp} still holds if we modify it slightly
by constraining $T_{\mathrm{opt}}$ to be $\mathbb{Z}$-packed and by letting $T_S$ be a shortest $\mathbb{Z}$-packed tree. Corollary
\ref{EqCorollary} now gives us our result.
\end{proof}

We wish to generalize the previous result to any number of terminal points, but for this we need another condition. If two edges of an MSPT are
incident to the same Steiner bead and are collinear then these edges are said to form a \textit{Steiner bond}. An MSPT $T$ on a set $N$ of
embedded terminals is called \textit{bond-free} if every MSPT on $N$ with the same topology as $T$ is free of Steiner bonds. Figure
\ref{figureBond} provides an example of an MSPT that is not bond-free; the fact that the depicted tree is an MSPT on the three solid nodes
follows from Proposition \ref{mainUpperProp} once it is noted that the SMT on the same terminals has $4$ beads.

\begin{figure}[htb]
\begin{center}

\includegraphics[scale=0.5]{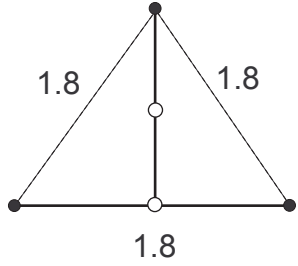}

\end{center}
\caption{An MSPT that is not bond-free.  \label{figureBond}}
\end{figure}

\begin{lemma}Let $N$ be a set of four embedded terminals admitting a bond-free full MSPT $T_{\mathrm{opt}}$. Then it is possible to find an MSPT on $N$,
with the same topology as $T_{\mathrm{opt}}$, such that all edges incident to terminals are of integer length.\label{FourPRep}
\end{lemma}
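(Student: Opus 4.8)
The plan is to mimic the three-terminal argument by viewing $\mathrm{beads}(T_{\mathrm{opt}})$ as the minimum of a ceiling-sum objective over the positions of the two Steiner beads, and then to use the level-region idea to push each cherry bead onto an integer-radius locus. Write $T_{\mathrm{opt}}$ as a full MSPT with Steiner beads $\mathbf{s}_1,\mathbf{s}_2$ joined by an edge, where $\mathbf{s}_1$ is adjacent to terminals $\mathbf{t}_1,\mathbf{t}_2$ and $\mathbf{s}_2$ is adjacent to terminals $\mathbf{t}_3,\mathbf{t}_4$ (up to relabelling, since any full topology on four terminals is a ``double cherry''). First I would fix $\mathbf{s}_2$ and treat $\mathbf{s}_1$ as variable: the bead count restricted to this motion is, up to an additive constant coming from the edges not incident to $\mathbf{s}_1$, the function $g(\mathbf{s}_1) = \lceil |\mathbf{t}_1\mathbf{s}_1| \rceil + \lceil |\mathbf{t}_2\mathbf{s}_1| \rceil + \lceil |\mathbf{s}_1\mathbf{s}_2| \rceil$. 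This is exactly the three-terminal situation of the previous lemma (with $\mathbf{s}_2$ playing the role of a third terminal), so its level-regions are bounded by integer-radius circular arcs, and the region containing $\mathbf{s}_1$ is bounded by at least two arcs. Here I must be slightly careful: the relevant displacement of $\mathbf{s}_1$ should bring both edges $\mathbf{s}_1\mathbf{t}_1$ and $\mathbf{s}_1\mathbf{t}_2$ to integer length, not merely decrease $g$. The bond-free hypothesis is what rules out the degenerate case in which the only way to land on a two-arc intersection forces $\mathbf{s}_1$ onto the collinear (Steiner-bond) configuration; so after this step I may assume $|\mathbf{s}_1\mathbf{t}_1|$ and $|\mathbf{s}_1\mathbf{t}_2|$ are both integers while $\mathrm{beads}$ has not increased.

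Next I would repeat the argument at the other cherry: freeze the (now adjusted) $\mathbf{s}_1$ and vary $\mathbf{s}_2$, making $|\mathbf{s}_2\mathbf{t}_3|$ and $|\mathbf{s}_2\mathbf{t}_4|$ both integers by the same level-region displacement, again without increasing the bead count and (using bond-freeness again) without creating a Steiner bond or collapsing a Steiner bead onto a terminal. The subtlety is that moving $\mathbf{s}_2$ changes the length of the central edge $\mathbf{s}_1\mathbf{s}_2$, but this edge is \emph{not} incident to a terminal, so it does not affect the conclusion we are after — we only need the four terminal-incident edges to be integral. After both displacements every edge incident to a terminal has integer length, the topology is unchanged, and $\mathrm{beads}$ is still minimal, which is exactly the desired MSPT.

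The main obstacle is making the two displacements genuinely compatible: after I move $\mathbf{s}_1$ onto an arc intersection the second cherry's level-regions shift (because $\mathbf{s}_2$'s objective $\lceil |\mathbf{t}_3\mathbf{s}_2|\rceil + \lceil |\mathbf{t}_4\mathbf{s}_2|\rceil + \lceil |\mathbf{s}_1\mathbf{s}_2|\rceil$ depends on the new $\mathbf{s}_1$), and I must argue that the second displacement does not undo the integrality achieved at the first cherry — which is immediate since the second displacement fixes $\mathbf{s}_1$ and hence fixes $|\mathbf{s}_1\mathbf{t}_1|,|\mathbf{s}_1\mathbf{t}_2|$. A secondary point requiring care is the boundary case where one of the level-regions is bounded by exactly one arc or where an arc intersection coincides with a terminal; this is precisely where the bond-free hypothesis on $T_{\mathrm{opt}}$ (interpreted as: \emph{every} same-topology MSPT is bond-free, so no displacement within the optimal level set can be forced into a bond) is invoked to guarantee a ``good'' intersection point exists. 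Once these compatibility and nondegeneracy points are settled, the construction is just two applications of the single-cherry lemma.
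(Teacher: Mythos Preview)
Your overall plan---freeze one Steiner bead, apply the three-terminal level-region argument at each cherry in turn---is indeed how the paper begins. But there is a real gap at the step where you claim that after displacing $\mathbf{s}_1$ you may assume \emph{both} $|\mathbf{s}_1\mathbf{t}_1|$ and $|\mathbf{s}_1\mathbf{t}_2|$ are integers. The three-terminal lemma only guarantees that \emph{some} two of the three edges $\mathbf{t}_1\mathbf{s}_1,\mathbf{t}_2\mathbf{s}_1,\mathbf{s}_1\mathbf{s}_2$ can be made integer; it does not let you choose which two. The level region for $\mathbf{s}_1$ may well be bounded only by arcs centred at $\mathbf{t}_1$ and $\mathbf{s}_2$ (or $\mathbf{t}_2$ and $\mathbf{s}_2$), in which case the only available arc-intersection makes $|\mathbf{t}_1\mathbf{s}_1|$ and $|\mathbf{s}_1\mathbf{s}_2|$ integer while $|\mathbf{t}_2\mathbf{s}_1|$ remains non-integer. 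Your appeal to bond-freeness does not save this: landing on an intersection of the $\mathbf{t}_1$-arc and the $\mathbf{s}_2$-arc is \emph{not} a Steiner bond (no collinearity is forced), so the bond-free hypothesis does not exclude it.

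The paper deals with exactly this difficulty by adding a further step. After the two $\mathbb{Z}$-packings it only claims \emph{three} integer edges in total, and then treats the representative case where $\mathbf{t}_1\mathbf{s}_1$, $\mathbf{s}_1\mathbf{s}_2$, $\mathbf{t}_3\mathbf{s}_2$ are integer but $\mathbf{t}_2\mathbf{s}_1$ is not. It now performs a different displacement: slide $\mathbf{s}_1$ along the circle of (integer) radius $|\mathbf{t}_1\mathbf{s}_1|$ centred at $\mathbf{t}_1$, in the direction that decreases $|\mathbf{s}_1\mathbf{s}_2|$ (so the bead count cannot rise), until $|\mathbf{t}_2\mathbf{s}_1|$ hits an integer. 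The obstruction to this move succeeding is precisely that $\mathbf{s}_1$ reaches the line through $\mathbf{t}_1$ and $\mathbf{s}_2$ first---which would create a Steiner bond---and \emph{that} is where bond-freeness is invoked. One then repeats the circular move at $\mathbf{s}_2$. So the role of the bond-free hypothesis is not to upgrade the three-terminal lemma to ``choose your two edges'', but to guarantee that this subsequent circular sliding step terminates before degenerating.
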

\begin{proof}
Let $N=\{\mathbf{t}_i:i=1,..,4 \}$ be the terminal set for $T_{\mathrm{opt}}$ and let $\mathbf{s}_1,\mathbf{s}_2$ be the Steiner beads of
$T_{\mathrm{opt}}$ with $\mathbf{s}_1$ adjacent to $\mathbf{t}_1$ and $\mathbf{t}_2$. Let $T_0$ be the subtree of $T_{\mathrm{opt}}$ induced by
the nodes $\mathbf{t}_1,\mathbf{t}_2,\mathbf{s}_1,\mathbf{s}_2$. Clearly $T_0$ is an MSPT on the nodes $\mathbf{t}_1,\mathbf{t}_2,\mathbf{s}_2$.
By fixing the position of $\mathbf{s}_2$ we convert $T_0$ into a $\mathbb{Z}$-packed tree by displacing $\mathbf{s}_1$. We now fix
$\mathbf{s}_1$ at its new position and displace $\mathbf{s}_2$ until the subtree induced by
$\mathbf{t}_3,\mathbf{t}_4,\mathbf{s}_1,\mathbf{s}_2$ is $\mathbb{Z}$-packed. The modified $T_{\mathrm{opt}}$ must now have at least three
integer length edges. Suppose that the edges $\mathbf{t}_1\mathbf{s}_1,\mathbf{s}_1\mathbf{s}_2,\mathbf{t}_3\mathbf{s}_2$ have integer lengths
(the other cases are handled similarly). We fix $\mathbf{s}_2$ and displace $\mathbf{s}_1$ along the circle centered at $\mathbf{t}_1$ and of
radius $\vert \mathbf{t}_1\mathbf{s}_1 \vert$. Note that the smallest value of $\vert \mathbf{s}_1\mathbf{s}_2 \vert$ occurs when $\mathbf{s}_1$
is displaced until it reaches the line connecting $\mathbf{t}_1$ and $\mathbf{s}_2$. Therefore displacement until $\vert
\mathbf{t}_2\mathbf{s}_1 \vert$ is an integer is possible due to the bond-free condition; see Figure \ref{figureFourP}. We fix the position of
$\mathbf{s}_1$ and repeat the process for $\mathbf{s}_2$.
\end{proof}

\begin{figure}[htb]
\begin{center}

\includegraphics[scale=0.5]{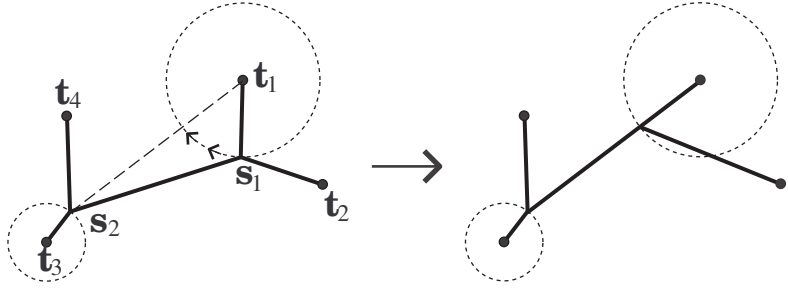}

\end{center}
\caption{Bond creation.  \label{figureFourP}}
\end{figure}

A \textit{caterpillar} is a tree with the property that the removal of its degree-one nodes results in a path.

\begin{lemma}Let $N$ be a set of $n \geq 4$ embedded terminals admitting a bond-free full MSPT $T_{\mathrm{opt}}$ and suppose that $T_{\mathrm{opt}}$
is a caterpillar. Let $\mathbf{s}_0$ be a cherry bead of $T_{\mathrm{opt}}$ connected to another Steiner bead $\mathbf{s}_1$. If
$T_{\mathrm{opt}}$ is bond-free then there exists a $\mathbb{Z}$-packed MSPT on $N$ with the same topology as $T_{\mathrm{opt}}$ such that every
edge, other than possibly $\mathbf{s}_0\mathbf{s}_1$, has integer length.
\end{lemma}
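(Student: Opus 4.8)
The plan is to induct on $n$, with Lemma~\ref{FourPRep} as the base case and contraction of the ``far'' cherry as the inductive step. For $n=4$ the only full Steiner topology is the caterpillar whose two Steiner beads are its two cherry beads, joined by a single edge; these are the $\mathbf{s}_0,\mathbf{s}_1$ of the statement. Lemma~\ref{FourPRep} gives an MSPT of this topology in which all four edges incident to terminals are integer, so the only possibly non-integer edge is $\mathbf{s}_0\mathbf{s}_1$, and this tree is $\mathbb{Z}$-packed; this settles the base case.

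For the inductive step ($n\ge 5$), the Steiner beads of $T_{\mathrm{opt}}$ form a path whose two endpoints are precisely its two cherry beads; let $\mathbf{s}_{n-3}$ be the cherry bead other than $\mathbf{s}_0$, with terminal leaves $\mathbf{w}_1,\mathbf{w}_2$ and Steiner neighbour $\mathbf{s}_{n-4}$. The first and most delicate step is to normalise $T_{\mathrm{opt}}$: keeping its topology and the MSPT property, displace $\mathbf{s}_{n-3}$ (and, if forced, also $\mathbf{s}_{n-4}$) so that $|\mathbf{s}_{n-3}\mathbf{w}_1|$ and $|\mathbf{s}_{n-3}\mathbf{w}_2|$ are both integers. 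Since $T_{\mathrm{opt}}$ is an MSPT, $\mathbf{s}_{n-3}$ minimises $\lceil|\mathbf{s}_{n-3}\mathbf{w}_1|\rceil+\lceil|\mathbf{s}_{n-3}\mathbf{w}_2|\rceil+\lceil|\mathbf{s}_{n-3}\mathbf{s}_{n-4}|\rceil$, and I would obtain the normalisation by the method used in the final part of the proof of Lemma~\ref{FourPRep}: the three-terminal integer-edge lemma moves $\mathbf{s}_{n-3}$ within the level-region of this ceiling-sum to a point where two of the three edges are integer, and a further circular displacement — which would otherwise terminate only at a Steiner bond, excluded by the bond-free hypothesis — makes the two integer edges be precisely $\mathbf{s}_{n-3}\mathbf{w}_1$ and $\mathbf{s}_{n-3}\mathbf{w}_2$. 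Write $\mathbf{q}$ for the new position of $\mathbf{s}_{n-3}$ and $\ell_i=|\mathbf{q}\mathbf{w}_i|\in\mathbb{Z}$.

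Now set $N'=(N\setminus\{\mathbf{w}_1,\mathbf{w}_2\})\cup\{\mathbf{q}\}$ and let $T'$ be the tree obtained from the normalised $T_{\mathrm{opt}}$ by deleting $\mathbf{w}_1,\mathbf{w}_2$ and regarding $\mathbf{q}$ as a terminal. The bead-count formula gives $\mathrm{beads}(T_{\mathrm{opt}})=\mathrm{beads}(T')+\ell_1+\ell_2-1$, and together with optimality of $T_{\mathrm{opt}}$ this shows $T'$ is a full MSPT on $N'$; it is also bond-free (a Steiner bond in any same-topology MSPT of $N'$ re-attaches to a Steiner bond in a same-topology MSPT of $N$) and a caterpillar with Steiner path $\mathbf{s}_0\,\text{--}\,\cdots\,\text{--}\,\mathbf{s}_{n-4}$ in which $\mathbf{s}_0$ is still a cherry bead adjacent to $\mathbf{s}_1$. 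Applying the induction hypothesis to $T'$ yields a $\mathbb{Z}$-packed MSPT $\hat T'$ on $N'$ of the same topology, every edge of which — except possibly $\mathbf{s}_0\mathbf{s}_1$ — is integer; in particular $|\mathbf{q}\mathbf{s}_{n-4}|$ is an integer. Re-growing the cherry, re-designate the fixed-position terminal $\mathbf{q}$ of $\hat T'$ as a degree-three Steiner bead $\mathbf{s}_{n-3}$ and attach $\mathbf{w}_1,\mathbf{w}_2$ to it, obtaining a tree $T$ with the topology of $T_{\mathrm{opt}}$. The three edges at $\mathbf{s}_{n-3}$ have integer lengths $|\mathbf{q}\mathbf{s}_{n-4}|,\ell_1,\ell_2$; every other edge of $T$ is an edge of $\hat T'$ and hence integer unless it is $\mathbf{s}_0\mathbf{s}_1$; and $\mathrm{beads}(T)=\mathrm{beads}(\hat T')+\ell_1+\ell_2-1=\mathrm{beads}(T')+\ell_1+\ell_2-1=\mathrm{beads}(T_{\mathrm{opt}})$, so $T$ is the desired $\mathbb{Z}$-packed MSPT.

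The routine parts are the two bead-count identities and the verification that contracting the far cherry of a bond-free full MSPT caterpillar again yields one (both are short computations with $\mathrm{beads}(\cdot)=1-n+\sum\lceil|e_i|\rceil$). The genuine obstacle is the normalisation in the second paragraph: making \emph{both} leaf-edges of the contracted cherry integer is the only place the bond-free hypothesis is really used, and — in contrast to the four-terminal situation — it can require displacing $\mathbf{s}_{n-4}$ as well as $\mathbf{s}_{n-3}$, so one must analyse the level-regions of the relevant ceiling-sum carefully enough to guarantee a feasible simultaneous displacement that does not raise the total bead count.
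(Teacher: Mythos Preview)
Your inductive plan is sound and, once the normalisation step is completed, it is exactly what the paper intends by its one-line proof ``This follows readily from repeated application of the previous lemma.'' The part you flag as the genuine obstacle is not a new difficulty requiring a separate level-region analysis: your normalisation \emph{is} an instance of Lemma~\ref{FourPRep}. Freeze $\mathbf{s}_{n-5}$ and everything below it, and regard $\{\mathbf{w}_1,\mathbf{w}_2,(\text{the terminal leaf of }\mathbf{s}_{n-4}),\mathbf{s}_{n-5}\}$ as a four-terminal set with Steiner beads $\mathbf{s}_{n-3},\mathbf{s}_{n-4}$. This sub-tree is a full MSPT on those four ``terminals'' (any bead-saving displacement of $\mathbf{s}_{n-3},\mathbf{s}_{n-4}$ would save beads in $T_{\mathrm{opt}}$), and it is bond-free (any same-topology MSPT of the four points having a Steiner bond lifts --- keeping $\mathbf{s}_0,\ldots,\mathbf{s}_{n-5}$ fixed --- to a same-topology MSPT of $N$ with the same bond, contradicting the hypothesis on $T_{\mathrm{opt}}$). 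Hence Lemma~\ref{FourPRep} applies verbatim and delivers integer $|\mathbf{s}_{n-3}\mathbf{w}_1|$ and $|\mathbf{s}_{n-3}\mathbf{w}_2|$ while moving only $\mathbf{s}_{n-3}$ and $\mathbf{s}_{n-4}$ and preserving the global bead count; there is nothing further to analyse.

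With that in hand, your bead-count identities and the bond-freeness of the contracted tree $T'$ are correct as written, and unwinding your induction is precisely the paper's ``repeated application'' of Lemma~\ref{FourPRep} sliding along the spine of the caterpillar. So your proposal is not wrong, just over-engineered at the one place where invoking the previous lemma directly would have sufficed.
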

\begin{proof}
This follows readily from repeated application of the previous lemma.
\end{proof}

Let $T$ be a non-caterpillar full MSPT rooted at two terminals connected to a cherry bead $\mathbf{r}$, and let $\mathbf{s}$ be any Steiner bead
of $T$ with children $\mathbf{v}_1,\mathbf{v}_2$. Then $\mathbf{s}$ is called a \textit{junction} of $T$ if, for each $i\in\{1,2\}$, the subtree
induced by $\mathbf{s}$,$\mathbf{v}_i$ and all descendants of $\mathbf{v}_i$ (if they exist) is a caterpillar. A Steiner bead is a
\textit{maximal} junction if it is a junction but its parent is not a junction - see Figure \ref{figureJunc}.

\begin{figure}[htb]
\begin{center}

\includegraphics[scale=0.5]{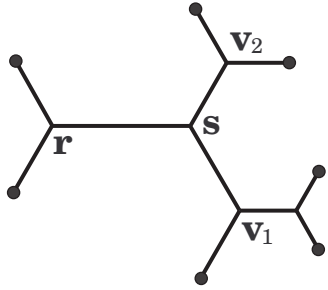}

\end{center}
\caption{Maximal junction $\mathbf{s}$. \label{figureJunc}}
\end{figure}

\begin{lemma}Let $N$ be a set of $n \geq 4$ embedded terminals admitting a bond-free full MSPT. Then it is possible to find an MSPT, say $T$, on
$N$ such that at most one of its edges are of non-integer length. Furthermore, either all $T$'s edges will be of integer length, or we will be
able to choose which internal edge of $T$ has non-integer length.
\end{lemma}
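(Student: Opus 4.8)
The plan is to prove the lemma by induction on $n$, assembling it from the three-terminal lemma, Lemma~\ref{FourPRep}, and the preceding caterpillar lemma along the junction decomposition of the topology. The base case $n=4$ is immediate, since the topology is then a caterpillar with a single internal edge $\mathbf{s}_1\mathbf{s}_2$ and Lemma~\ref{FourPRep} makes the four edges at terminals integral. For the inductive step I would first treat the case that $T_{\mathrm{opt}}$ is a caterpillar with spine $\mathbf{s}_0\mathbf{s}_1\cdots\mathbf{s}_{n-3}$. The caterpillar lemma already realises the two extreme internal edges $\mathbf{s}_0\mathbf{s}_1$ and $\mathbf{s}_{n-4}\mathbf{s}_{n-3}$ as the sole non-integral edge, and to realise an interior spine edge $f=\mathbf{s}_j\mathbf{s}_{j+1}$ I would \emph{sweep}: apply the four-terminal move of Lemma~\ref{FourPRep} to the consecutive pairs $(\mathbf{s}_0,\mathbf{s}_1),\dots,(\mathbf{s}_{j-1},\mathbf{s}_j)$ in turn---each move freezing the two outer neighbours and the two incident terminals as pseudo-terminals---parking the lone non-integral edge at $\mathbf{s}_{j-1}\mathbf{s}_j$, then mirror-sweep from the right down to $(\mathbf{s}_{j+1},\mathbf{s}_{j+2})$, parking a second residual at $\mathbf{s}_{j+1}\mathbf{s}_{j+2}$, and finally apply a single four-terminal move on $(\mathbf{s}_j,\mathbf{s}_{j+1})$, which straightens both parked edges and leaves exactly $f$ non-integral.

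For the inductive step with $T_{\mathrm{opt}}$ not a caterpillar, root it at a cherry bead, orient every edge towards the prescribed edge $f$, and choose a maximal junction $\mathbf{s}$ with children $\mathbf{v}_1,\mathbf{v}_2$, so that by subtree-optimality of the bead count each of the two caterpillar arms $C_i$ (spanned by $\mathbf{s},\mathbf{v}_i$ and the descendants of $\mathbf{v}_i$, with $\mathbf{s}$ regarded as a pseudo-terminal) is an MSPT to which, after checking bond-freeness, the caterpillar lemma applies. If $f$ lies inside an arm, run the caterpillar sweep there so that $f$ is the arm's sole residual, sweep the other arm so its residual reaches the internal edge of that arm incident to $\mathbf{s}$, clear this residual and all other edges at $\mathbf{s}$ by a bounded sequence of four-terminal moves at $\mathbf{s}$, and finally contract the now-rigid subtree $D_{\mathbf{s}}$ to a new terminal at the parent $\mathbf{s}'$ and apply the inductive hypothesis (or the three-terminal lemma if only three terminals remain) to integerise everything else. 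If instead $f$ lies at or above $\mathbf{s}$, fully integerise the two arms and the edges at $\mathbf{s}$ (pushing the two arm-residuals up to $\mathbf{s}$ and merging them by four-terminal moves at $\mathbf{s}$, so that the residual exits along $\mathbf{s}\mathbf{s}'$), contract $D_{\mathbf{s}}$ as before, and recurse to deliver the residual to the image of $f$; re-expanding $D_{\mathbf{s}}$ and resolving the three-terminal problem at $\mathbf{s}$ restores integrality of the edges at $\mathbf{s}$. In every case the freedom to choose the leftover edge in the caterpillar lemma, in the recursive call, and via the sweeps composes to give freedom over any single internal edge of $T$.

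The step I expect to be the main obstacle is feasibility, together with the bookkeeping needed to stay inside the class of MSPTs throughout. Each move above slides a Steiner bead along the circle centred at an incident (true or pseudo) terminal, and one must guarantee that some other incident edge can be brought to integral length before the bead is driven onto the line through that terminal and the opposite bead---that is, before a Steiner bond is forced; this is exactly what the bond-free hypothesis supplies, as in Lemma~\ref{FourPRep}. One must then show that this hypothesis persists for every caterpillar arm and every contracted instance that arises, verify via Lemma~\ref{Lemma1} that each move leaves $\sum_i\lceil|e_i|\rceil$ unchanged or smaller and hence keeps the bead count at its minimum, and confirm that the residuals produced on the two sides of a split or a junction can always be merged into a single one delivered to $f$. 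Establishing feasibility, minimality, and mergeability simultaneously and uniformly across the recursion is the delicate core of the argument.
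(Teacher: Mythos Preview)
Your approach is essentially the same as the paper's: both use the maximal-junction decomposition to peel off caterpillar arms via the caterpillar lemma, reduce to a smaller tree by treating the arm roots as pseudo-terminals, and then relocate the single remaining non-integer edge by repeated applications of Lemma~\ref{FourPRep}. The paper frames this as an iterative reduction (process all maximal junctions, replace each junction's children by terminals, repeat until a caterpillar remains, then ``move'' the non-integer property along the spine), while you frame it as an explicit induction with contraction; these are the same argument in different clothing.

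Two minor remarks. First, your caterpillar sweep is more elaborate than necessary: once the caterpillar lemma has parked the residual at $\mathbf{s}_0\mathbf{s}_1$, a single left-to-right sequence of four-terminal moves on $(\mathbf{s}_1,\mathbf{s}_2),(\mathbf{s}_2,\mathbf{s}_3),\dots,(\mathbf{s}_j,\mathbf{s}_{j+1})$ already delivers the residual to $f$, since each move makes its four ``terminal'' edges integral and leaves only its internal edge possibly non-integral; there is no need for a two-sided sweep with a merge. Second, the concerns you flag in your final paragraph---persistence of bond-freeness for the sub-caterpillars and contracted instances, and preservation of the bead count through each move---are legitimate, but the paper's proof is equally terse on these points and does not verify them either. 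So your level of rigor matches (and your diagnosis of the soft spots exceeds) that of the original.
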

\begin{proof}
Root $T_{\mathrm{opt}}$ at two terminals connected to a cherry bead $\mathbf{r}$. Let $\mathbf{s}$ be a maximal junction of $T_{\mathrm{opt}}$
with children $\mathbf{v}_1,\mathbf{v}_2$ and for each $i \in \{1,2\}$ let $T_i$ be the subtree of $T_{\mathrm{opt}}$ induced by
$\mathbf{s}$,$\mathbf{v}_1$,$\mathbf{v}_2$, the parent of $\mathbf{s}$ and the descendants of $\mathbf{v}_i$. Note that $T_i$ is a caterpillar.
By fixing the positions of the parent of $\mathbf{s}$ and the child of $\mathbf{s}$ not equal to $\mathbf{v}_i$, and applying the previous
lemma, we force all edges of $T_i$ (except possibly $\mathbf{sv}_i$) to be of integer length. We do this for all maximal junctions of
$T_{\mathrm{opt}}$. For the resultant tree, say $T$, we now consider $\mathbf{v}_1$ and $\mathbf{v}_2$ as terminals and ignore the subtrees
induced by the descendants of $\mathbf{v}_1$ and $\mathbf{v}_2$ (similarly for other junctions). We select the maximum junctions with respect to
$T$ and continue the process. Once there are no junctions left (i.e., only a caterpillar remains) we force all edges, except possibly the edge
between $\mathbf{r}$ and its child, to be of integer length. Repeated application of Lemma \ref{FourPRep} now allows us to choose which internal
edge should possibly be of non-integer length, i.e., we now ``move" the non-integer property to any other internal edge.
\end{proof}

\begin{proposition}Let $N$ be any set of terminals in the Euclidean plane admitting a bond-free full MSPT $T_{\mathrm{opt}}$. Then a shortest
total length $\mathbb{Z}$-packed tree on $N$ is an MSPT.
\end{proposition}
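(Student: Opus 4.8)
The plan is to mimic the three-terminal argument treated earlier in this section: use the lemma just proved to supply a $\mathbb{Z}$-packed MSPT, and then re-run Proposition~\ref{mainUpperProp} with this MSPT in place of $T_{\mathrm{opt}}$ and a shortest $\mathbb{Z}$-packed tree in place of the SMT. First I would dispose of the small cases: for $n\le 2$ the claim is immediate, and for $n=3$ it is the three-terminal proposition already established (whose hypothesis ``admits a full MSPT'' is weaker than bond-freeness), so assume $n\ge 4$. By the preceding lemma, $N$ admits an MSPT with the same topology as $T_{\mathrm{opt}}$ and at most one non-integer edge; replacing $T_{\mathrm{opt}}$ by this tree (still bond-free, still a full MSPT), we may assume $T_{\mathrm{opt}}$ itself has at most one non-integer edge. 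Since a full MSPT has $2n-3$ edges and a full Steiner topology, $T_{\mathrm{opt}}$ then has at least $2n-4$ integer-length edges, i.e.\ $T_{\mathrm{opt}}$ is $\mathbb{Z}$-packed.

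Next, let $T_S$ be a shortest total length $\mathbb{Z}$-packed tree on $N$. (Such a tree exists: there are finitely many full Steiner topologies on $n$ terminals, and for each one the set of $\mathbb{Z}$-packed embeddings of total length at most $L(T_{\mathrm{opt}})$ is compact, so the infimum is attained.) I claim $\mathrm{beads}(T_S)=\mathrm{beads}(T_{\mathrm{opt}})$, which immediately gives the result. To prove the claim I would re-examine the proof of Proposition~\ref{mainUpperProp} with the roles of ``SMT'' and ``MSPT'' played by $T_S$ and $T_{\mathrm{opt}}$. Both trees are already full Steiner trees, so $T_S^*=T_S$ and $T_{\mathrm{opt}}^*=T_{\mathrm{opt}}$: no splitting is needed, no spurious zero-length edges are created, and each tree has exactly $m=2n-3$ edges. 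The only place that proof invokes any minimality is inequality~(\ref{eq0}), $\sum_i\vert a_i\vert\le\sum_i\vert e_i\vert$, and here this holds simply because $T_{\mathrm{opt}}$ is a $\mathbb{Z}$-packed tree while $T_S$ is a shortest one; the derived inequality $\sum_{i\in I}p_i\ge\sum_{i\in D}p_i$ is just a rearrangement and is independent of how the edges of the two (possibly non-isomorphic) trees are labelled $1,\dots,m$. Every remaining step is the purely arithmetic bookkeeping of Lemma~\ref{Lemma1}. Hence Proposition~\ref{mainUpperProp}, and with it Corollary~\ref{EqCorollary}, remains valid in this modified setting: $\mathrm{beads}(T_S)-\mathrm{beads}(T_{\mathrm{opt}})\le\max\{2n-4-j,\,0\}$, where $j$ is the number of integer-length edges of $T_S$. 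But $T_S$ is $\mathbb{Z}$-packed, so $j\ge 2n-4$ and the bound is $0$; thus $\mathrm{beads}(T_S)\le\mathrm{beads}(T_{\mathrm{opt}})$. Since $T_{\mathrm{opt}}$ is an MSPT the reverse inequality is automatic, so $\mathrm{beads}(T_S)=\mathrm{beads}(T_{\mathrm{opt}})$ and $T_S$ is an MSPT.

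The only real work is the verification in the second paragraph: confirming that the proofs of Proposition~\ref{mainUpperProp} and of Corollary~\ref{EqCorollary} (which was literally phrased for an SMT $T_S$) use nothing about Steiner minimal trees beyond inequality~(\ref{eq0}) and the reduction $T\mapsto T^*$ to full Steiner trees — both of which survive intact when $T_S$ is allowed to range over $\mathbb{Z}$-packed trees instead of SMTs, and the second of which is in fact trivial here since $T_S$ and $T_{\mathrm{opt}}$ are already full. The compactness remark guaranteeing that a shortest $\mathbb{Z}$-packed tree exists is routine but worth stating, since otherwise the proposition would be vacuous. No new geometric input is needed beyond the lemmas already proved.
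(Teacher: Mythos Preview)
Your proof is correct and follows exactly the route the paper takes: invoke the preceding lemma to obtain a $\mathbb{Z}$-packed MSPT, then observe that Proposition~\ref{mainUpperProp} (and hence Corollary~\ref{EqCorollary}) goes through verbatim once inequality~(\ref{eq0}) is secured by letting $T_S$ range over $\mathbb{Z}$-packed trees rather than SMTs. Your added remarks---the compactness argument for existence and the explicit check that only~(\ref{eq0}) uses minimality---simply flesh out what the paper compresses into the phrase ``a slight modification to Proposition~\ref{mainUpperProp}''; note too that in this modified setting the delicate sub-case of Case~3 (moving a Steiner point when $\varepsilon\neq 0$) never arises, since a $\mathbb{Z}$-packed $T_S$ with $n\ge 3$ already has an integer edge and hence forces $\varepsilon=0$.
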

\begin{proof}
This follows, as before, from the previous result and from Corollary \ref{EqCorollary} after a slight modification to Proposition
\ref{mainUpperProp}.
\end{proof}

\section{Concluding Remarks and Conjectures}
We suspect that, at least in the Euclidean case, the performance difference $2n-4$ of the SMT heuristic can be improved by supplementing it with
an algorithm that involves relatively small displacements of the Steiner points. The question is: by how much can we improve the performance? If
for any set of embedded terminals it is possible to find an MSPT with the same topology (or a degeneracy thereof) as an SMT on the terminals,
then it would be theoretically possible to improve the performance to optimality in this way. If an SMT $T_S$ is not full then certainly the
topology of an MSPT on the same terminals is generally not a degeneracy of $T_S$ (i.e., an MSPT topology cannot be obtained simply by collapsing
edges of $T_S$). Consider for instance the embedded terminals $\{\mathbf{t}_i:i=1,2,3\}$ in the Euclidean plane where $\angle
\mathbf{t}_1\mathbf{t}_2\mathbf{t}_3=120^\circ$ and $\vert \mathbf{t}_1\mathbf{t}_2\vert=\vert \mathbf{t}_2\mathbf{t}_3\vert=5.1$ units. Clearly
the SMT $T_S$ on $\{\mathbf{t}_i\}$ is not full and $\mathrm{beads}(T_S)=10$. However, $\mathrm{beads}(T_{\mathrm{opt}})=9$ as shown in Figure
\ref{figureTop}.

\begin{figure}[htb]
\begin{center}

\includegraphics[scale=0.5]{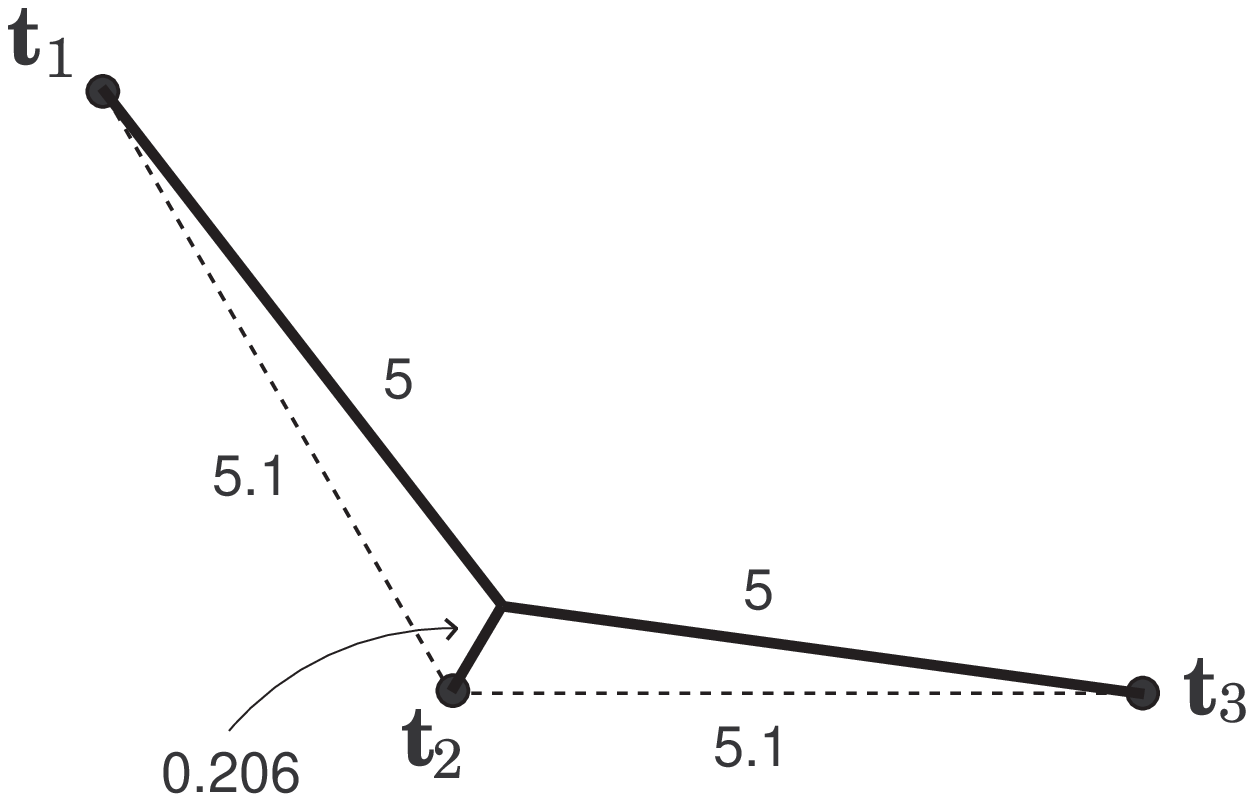}

\end{center}
\caption{$T_{\mathrm{opt}}$ with nine beads. \label{figureTop}}
\end{figure}

To conclude we now state some conjectures that we hope will inspire further research into the relationship between SMTs and MSPTs. Let $N$ be a
set of terminals embedded in the Euclidean plane and let $T_S$ be an SMT on $N$.

\begin{conjecture}If $T_S$ is full then there exists an MSPT on $N$ that is a degeneracy of $T_S$.
\end{conjecture}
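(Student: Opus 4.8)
The plan is to proceed by induction on $n$, using the sprouting/cherry-removal machinery that was already developed for the propositions in Section \ref{sec2}. Since $T_S$ is full, it has at least two cherries; pick a cherry with Steiner bead $\mathbf{s}_0$ adjacent to terminals $\mathbf{u}_1,\mathbf{u}_2$ and to one further node $\mathbf{v}$. Collapsing the cherry to a single terminal $\mathbf{t}^*$ at the position of $\mathbf{s}_0$ yields a full Steiner topology on $n-1$ terminals; the key first observation is that the induced tree on $\{\mathbf{t}^*\}\cup(N\setminus\{\mathbf{u}_1,\mathbf{u}_2\})$ is again a (full) SMT for that terminal set — this follows from the variational characterisation of Steiner points (each angle at $\mathbf{s}_0$ is $120^\circ$) exactly as in the proof that sprouting preserves optimality. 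By the inductive hypothesis there is an MSPT $T'$ on the reduced terminal set that is a degeneracy of the reduced SMT topology.

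The second step is to re-attach the cherry. Given the embedded MSPT $T'$, I would look at where the copy of $\mathbf{t}^*$ sits and reintroduce the two edges $\mathbf{s}_0\mathbf{u}_1$, $\mathbf{s}_0\mathbf{u}_2$. The natural move is to place $\mathbf{s}_0$ so that it coincides with (the image of) $\mathbf{t}^*$ and then separately optimise the three-terminal subproblem on $\{\mathbf{u}_1,\mathbf{u}_2,\mathbf{v}'\}$, where $\mathbf{v}'$ is the node $T'$ attaches to; the lemma preceding Proposition in Section \ref{sec4} (``$N$ of three embedded terminals admitting a full MSPT'' case) guarantees we can choose this local piece to be $\mathbb{Z}$-packed, i.e.\ with two integer-length edges, and in particular with $\mathrm{beads}$ equal to the three-terminal optimum. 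The delicate point is bookkeeping: one must check $\mathrm{beads}(T'\cup\{\text{cherry}\}) = \mathrm{beads}(T_{\mathrm{opt}})$, i.e.\ that re-attaching the cherry adds exactly the minimum possible number of beads. Here I would invoke Lemma \ref{Lemma1}-style ceiling arithmetic together with the fact (from the three-terminal lemma) that the two newly created terminal edges can be made integral, so that no ``wasted'' fractional part is incurred at the join.

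The remaining and hardest step is to argue that the resulting tree is not merely a good MSPT candidate but an \emph{MSPT}, and that its topology is a degeneracy of $G_n$ rather than something genuinely different. For optimality I would run the lower-bound argument of Proposition \ref{mainUpperProp} in reverse: any MSPT $T_{\mathrm{opt}}$ on $N$, when its cherry is collapsed, produces a tree on $n-1$ terminals whose bead count is at least $\mathrm{beads}(T')$ minus a controlled constant, and the three-terminal lemma shows the cherry itself cannot be beaten; combining these gives $\mathrm{beads}(\text{our tree})\le\mathrm{beads}(T_{\mathrm{opt}})$. The genuine obstacle is the first sentence of the induction: it is conceivable that the optimal way to bead $N$ has a topology whose cherry-collapse is \emph{not} the SMT topology $G_{n-1}$, so the inductive hypothesis would not directly apply to the ``competitor'' $T_{\mathrm{opt}}$. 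Resolving this seems to require showing that among all MSPTs on $N$ there is always one whose topology refines $G_n$ — essentially the statement we are trying to prove — so the induction must be set up carefully, perhaps by strengthening the inductive claim to assert simultaneously that \emph{every} full-SMT-compatible terminal set admits an MSPT degenerating from the SMT topology \emph{and} that this MSPT can be taken $\mathbb{Z}$-packed with a prescribed non-integer internal edge (mirroring the caterpillar/junction lemmas of Section \ref{sec4}). I expect this bootstrapping of the inductive hypothesis to be where the real work lies; the geometric steps (sprouting, cherry collapse, three-terminal optimisation) are all already in hand from the earlier sections.
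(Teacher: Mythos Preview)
The statement you are attempting to prove is listed in the paper as a \emph{conjecture}, not a theorem; the paper offers no proof whatsoever, and explicitly presents it (together with the conjecture that follows) as an open problem intended to ``inspire further research into the relationship between SMTs and MSPTs.'' There is therefore nothing in the paper to compare your proposal against.

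As for the proposal itself, you have already put your finger on the fatal gap: in your own words, the inductive step ``seems to require showing that among all MSPTs on $N$ there is always one whose topology refines $G_n$ --- essentially the statement we are trying to prove.'' This is exactly right. Collapsing a cherry of $T_S$ and invoking the inductive hypothesis gives you an MSPT on the reduced terminal set that degenerates from $G_{n-1}$, but when you re-attach the cherry you only obtain a \emph{candidate} tree on $N$; to certify it as an MSPT you must compare against an arbitrary competitor $T_{\mathrm{opt}}$, and there is no reason the competitor's cherry-collapse should land in the topology $G_{n-1}$ (or even be full). The suggested remedy --- strengthening the inductive hypothesis to a simultaneous $\mathbb{Z}$-packed statement with a prescribed non-integer edge --- does not obviously break the circularity, because the $\mathbb{Z}$-packed machinery of Section~\ref{sec4} already assumes you are working inside a fixed MSPT topology. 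In short, your sketch correctly identifies the relevant tools (cherry collapse, three-terminal lemma, ceiling arithmetic) but does not supply the missing idea that would force the global MSPT topology to agree with the SMT topology; that missing idea is precisely why the paper states this as a conjecture rather than a result.
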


This conjecture implies that an algorithm based on displacing Steiner points of an SMT has the potential for generating optimal MSPTs for a very
large class of terminal configurations. The next conjecture would allow such an algorithm to run in polynomial time.

\begin{conjecture}Let $G$ be any tree topology on $N$ where Steiner points are of degree at least three. Then finding a
tree $T$ on $N$ with the same topology as $G$ and minimizing $\mathrm{beads}(T)$ can be done in polynomial time.
\end{conjecture}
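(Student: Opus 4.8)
The plan is to reformulate the problem as an integer optimisation over a convex feasible region, supply a polynomial feasibility oracle for that region, and then argue that the tree structure of $G$ forces the resulting integer program to be tractable.

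\textbf{Reformulation.} First pass to the full Steiner topology $G^{*}$ obtained from $G$ by splitting, so that for any embedding $\mathrm{beads}(T)=1-n+\sum_{i=1}^{m}\lceil|e_{i}|\rceil$ with $m=2n-3$, the terminals fixed and the $n-2$ Steiner points free (zero-length edges allowed). Since $\lceil x\rceil=\min\{z\in\mathbb{Z}:z\ge x\}$, minimising $\mathrm{beads}(T)$ over embeddings with topology $G$ is exactly minimising $\sum_{i=1}^{m}k_{i}$ over integer vectors $k=(k_{1},\dots,k_{m})$ for which some embedding of $G^{*}$ satisfies $|e_{i}|\le k_{i}$ for all $i$. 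Write $C\subseteq\mathbb{R}^{m}_{\ge0}$ for the set of real vectors $\ell$ admitting an embedding with $|e_{i}|\le\ell_{i}$ for all $i$; the task becomes computing $\min\{\,\mathbf{1}^{\top}k:\ k\in\mathbb{Z}^{m}\cap C\,\}$.

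\textbf{The feasibility oracle and the shape of $C$.} I would next show that membership in $C$ is decidable in polynomial time: for fixed $\ell$, the question ``does there exist a placement of the Steiner points with $|s_{a}-s_{b}|\le\ell_{e}$ on every edge $e=\{a,b\}$, terminals fixed?'' is a second-order cone feasibility problem in the $2(n-2)$ Steiner coordinates, hence solvable in polynomial time; alternatively, one can compute by a leaf-to-root sweep the reachable set of each Steiner point as an intersection of Minkowski sums (with disks) of previously computed convex reachable sets. Two structural facts come for free: $C$ is upward closed, and $C$ is convex --- if $\ell,\ell'\in C$ are realised by embeddings $s,s'$ then $\tfrac12(s+s')$ realises $\tfrac12(\ell+\ell')$ by the triangle inequality. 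Thus we must minimise a linear functional over the lattice points of a convex, coordinate-monotone body given by a membership oracle; only its lower boundary matters.

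\textbf{The main obstacle.} Turning this into a genuinely polynomial algorithm --- rather than an integer program over a curved convex body, intractable in general --- is what I expect to be hard. The natural route is to use the tree structure to shrink the search. Concretely, I would try to establish a fixed-topology analogue of the canonical-form results of Section~\ref{sec4}: that among optimal embeddings with topology $G$ there is one that is essentially $\mathbb{Z}$-packed, so all but one of the $2n-3$ edges have integer length. Granting this, the $2n-4$ integrality constraints pin down the $2n-4$ degrees of freedom up to the choice of integers, and one can hope to determine those integers by a dynamic program over $G$ rooted at the chosen non-integer edge: for each Steiner point and each admissible integer radius of its parent edge, store the (convex, polynomially described) reachable arc of that point together with the minimum partial bead count, combining children by the Minkowski-sum/intersection operations above. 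The crux is to bound the number of relevant parent-edge radii per node by a polynomial: edge lengths are bounded by the total length of a reference tree, but that bound is exponential in the input bit-length, so one needs a pruning argument showing only polynomially many radii near the continuous optimum can be optimal, or an entirely different combinatorial description of the lower boundary of $C$. As stepping stones I would first settle $n=3$, where the level-region picture of Figure~\ref{figureLevs} makes the optimisation explicit, and then the caterpillar case, where the depth-one structure of maximal junctions should let the dynamic program run without the radius blow-up; these would also test whether the $\mathbb{Z}$-packed canonical form survives once the topology is prescribed.
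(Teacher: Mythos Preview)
The statement you are addressing is presented in the paper as a \emph{conjecture}; the authors pose it as an open problem in the concluding section and give no proof, so there is no argument in the paper to compare your attempt against.

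Viewed on its own merits, your proposal is a sensible research outline but not a proof, and you are candid about this. The reformulation as minimising $\mathbf{1}^{\top}k$ over $\mathbb{Z}^{m}\cap C$ is correct, the convexity and upward-closure of $C$ are correct, and the second-order cone feasibility oracle for membership in $C$ is sound in the Euclidean setting. The genuine gap is exactly the one you name: a polynomial separation/membership oracle for a convex body does \emph{not} yield polynomial integer optimisation once the ambient dimension $m=2n-3$ grows with $n$, so everything hinges on the tree-specific structure you hope to exploit. Two cautions on that hoped-for structure. First, the canonical-form lemmas of Section~\ref{sec4} are proved only under the bond-free hypothesis, and while the displacement arguments there are topology-preserving, a fixed-topology $\mathbb{Z}$-packed form without that hypothesis is itself unproved. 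Second, even granting a $\mathbb{Z}$-packed optimum, the dynamic program you sketch must range over integer edge lengths whose magnitude is controlled only by the diameter of $N$, hence is exponential in the input bit-length; your ``pruning near the continuous optimum'' idea is a plausible attack but is where the real work lies and is not carried out. In short, your write-up correctly locates the difficulty but does not resolve it, which is consistent with the paper's own stance that the statement remains open.
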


\textbf{Acknowledgement.} The authors wish to thank Jamie Evans for partaking in many fruitful discussions during the development of this paper.
We would also like to thank the referees for their insightful comments.

\end{document}